\documentclass[10pt]{amsart}
\usepackage[english]{babel}
\usepackage{amssymb}
\usepackage{amsmath}
\usepackage{lscape}

\thispagestyle{empty}

\textheight 21.5cm

\textwidth 14cm \topmargin -0.6cm \oddsidemargin 1cm \evensidemargin
1cm

\newtheorem{dummy}{Dummy}

\newtheorem{lemma}[dummy]{Lemma}
\newtheorem{theorem}[dummy]{Theorem}
\newtheorem{proposition}[dummy]{Proposition}
\newtheorem{corollary}[dummy]{Corollary}

\theoremstyle{definition}

\newtheorem{example}[dummy]{Example}

\newtheorem{remark}[dummy]{Remark}

\newcommand{\ignore}[1]{}


\author{C. Brown}
\author{S. Pumpl\"un}

\email{christian\_jb@hotmail.co.uk; susanne.pumpluen@nottingham.ac.uk}
\address{School of Mathematical Sciences\\
University of Nottingham\\ University Park\\ Nottingham NG7 2RD\\
United Kingdom }


\keywords{Skew polynomials, skew polynomial ring, Ore polynomials, nonassociative
algebra.}

\subjclass[2010]{Primary: 16S36; Secondary: 17A60, 17A99}

\begin{document}

\title[Skew polynomials and nonassociative algebras]
{How a nonassociative algebra reflects the properties of a skew polynomial}

\begin{abstract}
Let $S$ be  a unital associative division ring and $S[t;\sigma,\delta]$ be a
skew polynomial ring, where $\sigma$ is an endomorphism of $S$ and $\delta$ a left $\sigma$-derivation.
 For each $f\in S[t;\sigma,\delta]$ of degree
$m>1$ with a unit as leading coefficient, there exists a unital nonassociative algebra whose behaviour reflects the properties of
$f$.
These algebras yield canonical examples of right division algebras when $f$ is irreducible.
The structure of their right
nucleus depends on the choice of $f$. In the classical literature, this nucleus appears  as the
 eigenspace of $f$, and is used to investigate the irreducible factors of $f$.
  We give necessary and sufficient criteria for skew polynomials of low degree to be irreducible. These yield examples of new division algebras $S_f$.
\end{abstract}

\maketitle

%
\section*{Introduction}
%

 The investigation of skew polynomials is an active area in algebra which  has
applications to coding theory, to solving differential and difference
equations, and in engineering, to name just a few. For instance, linear differential operators (where $\sigma=id$) and linear difference operators (where $\delta=0$) are special cases of skew polynomials.

Let $D$ be a unital associative division ring and $R=D[t;\sigma,\delta]$ a skew
polynomial ring, where $\sigma$ is an endomorphism of $D$ and $\delta$ a left $\sigma$-derivation.
Suppose $f\in D[t;\sigma,\delta]$ has degree $m$. Using right division by $f$ to
define a multiplication on the set of skew polynomials of degree less
than $m$, this set becomes a unital nonassociative algebra we denote by $S_f$. The
algebra $S_f$ generalizes the classical quotient algebra construction
 when factoring out a two-sided ideal generated by a right invariant skew polynomial $f$.
 When choosing $f$ and $R$ in the right way, it can be also seen as a generalization of certain crossed product algebras and some Azumaya algebra constructions.
First results on the structure of the algebras $S_f$ which initially were defined by Petit in \cite{P66} have appeared in \cite{P66, P68, BP18.1, BP18.2, P15, P15.2,  P18.3}.
First applications to coding theory have appeared for instance in \cite{P17, P18.1, P18.2}.

 Recently, a computational criterion for deciding whether a bounded skew polynomial is irreducible was developed
 in \cite{GLN}.
 The method heavily relies on being able to find the zero divisors in the right nucleus
 of $S_f$ (although the simple algebra employed there, called the eigenspace of $f$, is not recognized as the right nucleus of $S_f$ in that paper). The method is only applicable for certain set-ups when the input data $S$, $\sigma$ and $\delta$ are effective and computable,
  but it demonstrates the importance of developing a better understanding of the algebras $S_f$ and their algebraic structure.
Independently, effective algorithms to compute the eigenspace (and thus the right nucleus of $S_f$, again not recognized as such) for the special case that $R=\mathbb{F}_q(x)[t;\sigma,\delta]$ can be found in
\cite{GZ}, and for $R=\mathbb{F}_q[t;\sigma]$  in \cite{G0}, \cite{R}. In all cases the eigenspace is a crucial tool to understand the decomposability of the skew polynomial $f$.

This paper consists of two parts. The first one considers the structure of the right nucleus of the algebras $S_f$, establishing how it reflects the type of the skew polynomial $f$ it is defined with,
but also the important role irreducible polynomials play in the construction of classes of nonassociative unital (right) division algebras.

The second part looks at skew
polynomials of low degree as well as the polynomial $f(t)=t^m-a$, and when these polynomials are irreducible in
$D[t;\sigma,\delta]$,
in order to obtain examples for the construction of (right) division algebras.

After establishing the basic terminology in Section \ref{sec:prel}, we define Petit algebras in Section \ref{subsec:structure}
and collect some results on their right nuclei in Section \ref{section:Right Semi-Invariant Polynomials}.
We investigate when the algebras $S_f$ are right (and not left) division
algebras in Section \ref{section:When is S_f a Division Algebra?}. A
necessary condition for $S_f$ being a right division algebra is that the polynomial $f$ is irreducible.
 We then collect some irreducibility criteria for polynomials of
low degree and the polynomial $f(t)=t^m-a$ in  both $R = D[t;\sigma]$ and $R = D[t;\sigma,\delta]$ in Sections
\ref{section:Irreducibility Criteria in D[t;sigma]} and \ref{section:Irreducibility Criteria in D[t;sigma,delta]},
including the special case where $D$ is a finite field.

We point out that there exists some kind of Eisenstein valuation criteria to test a skew polynomial over a division ring for reducibility,
using some (noncommutative) valuation theory for skew polynomial rings \cite{CZ, GMR}.
We believe our criteria  are more tractable for the types of skew polynomials we consider.

Most of this work is part of the first author's PhD thesis \cite{CB} written under the supervision of the second author.

%
%

\section{Preliminaries} \label{sec:prel}

\subsection{Skew polynomial rings}

Let $S$ be a unital associative ring, $\sigma$ a ring endomorphism of
$S$ and $\delta:S\rightarrow S$ a \emph{left $\sigma$-derivation},
i.e. an additive map such that
$\delta(ab)=\sigma(a)\delta(b)+\delta(a)b$
for all $a,b\in S$.
Then the \emph{skew polynomial ring} $R=S[t;\sigma,\delta]$ is the
set of skew polynomials $g(t)=a_0+a_1t+\dots +a_nt^n$ with $a_i\in
S$, with term-wise addition and where the multiplication is defined
via $ta=\sigma(a)t+\delta(a)$ for all $a\in S$ \cite{O1}. That means,
$$at^nbt^m=\sum_{j=0}^n a(\Delta_{n,j}\,b)t^{m+j}$$ for all $a,b\in
S$, where the map $\Delta_{n,j}$ is defined recursively via
$$\Delta_{n,j}=\delta(\Delta_{n-1,j})+\sigma (\Delta_{n-1,j-1}),$$ with
$\Delta_{0,0}=id_S$, $\Delta_{1,0}=\delta$, $\Delta_{1,1}=\sigma $.
Therefore $\Delta_{n,j}$ is the sum of all monomials in $\sigma$
and $\delta$ of degree $j$ in $\sigma$ and degree $n-j$ in $\delta$
\cite[p.~2]{J96}. If $\delta=0$, then $\Delta_{n,n}=\sigma^n$.

For $\sigma=id$ and $\delta=0$, we obtain the usual ring of left
polynomials $S[t]=S[t;id,0]$. Define ${\rm Fix}(\sigma)=\{a\in S\,|\,
\sigma(a)=a\}$ and ${\rm Const}(\delta)=\{a\in S\,|\, \delta(a)=0\}$.

 For $f(t)=a_0+a_1t+\dots +a_nt^n\in R$ with $a_n\not=0$ define ${\rm deg}(f)=n$ and ${\rm deg}(0)=-\infty$.
Then ${\rm deg}(gh)\leq{\rm deg} (g)+{\rm deg}(h)$ (with equality if
$h$ has an
 invertible leading coefficient, or $g$ has an
 invertible leading coefficient and $\sigma$ is injective, or if $S$ is a division ring).
 An element $f\in R$ is \emph{irreducible} in $R$ if it is not a unit and  it has no proper factors, i.e if there do not exist $g,h\in R$ with
 ${\rm deg}(g),{\rm deg} (h)<{\rm deg}(f)$ such
 that $f=gh$.

\subsection{Nonassociative algebras} \label{subsec:2}

Let $R$ be a unital commutative ring and let $A$ be an $R$-module. We
call $A$ an \emph{algebra} over $R$ if there exists an $R$-bilinear
map $A\times A\mapsto A$, $(x,y) \mapsto x \cdot y$, usually denoted simply
by juxtaposition $xy$, the  \emph{multiplication} of $A$. An algebra
$A$ is called \emph{unital} if there is an element in $A$, denoted by
1, such that $1x=x1=x$ for all $x\in A$. We will only consider unital
algebras.

For an $R$-algebra $A$, associativity in $A$ is measured by the {\it
associator} $[x, y, z] = (xy) z - x (yz)$. The {\it left nucleus} of
$A$ is defined as ${\rm Nuc}_l(A) = \{ x \in A \, \vert \, [x, A, A]
= 0 \}$, the {\it middle nucleus}  as ${\rm Nuc}_m(A) = \{ x \in A \,
\vert \, [A, x, A]  = 0 \}$ and  the {\it right nucleus}  as ${\rm
Nuc}_r(A) = \{ x \in A \, \vert \, [A,A, x]  = 0 \}$. ${\rm Nuc}_l(A)$, ${\rm Nuc}_m(A)$ and ${\rm Nuc}_r(A)$ are associative
subalgebras of $A$. Their intersection
 ${\rm Nuc}(A) = \{ x \in A \, \vert \, [x, A, A] = [A, x, A] = [A,A, x] = 0 \}$ is the {\it nucleus} of $A$.
${\rm Nuc}(A)$ is an associative subalgebra of $A$ containing $R1$
and $x(yz) = (xy) z$ whenever one of the elements $x, y, z$ is in
${\rm Nuc}(A)$. The  {\it commuter} of $A$ is defined as ${\rm
Comm}(A)=\{x\in A\,|\,xy=yx \text{ for all }y\in A\}$ and the {\it
center} of $A$ is ${\rm C}(A)=\text{Nuc}(A)\cap  {\rm Comm}(A)$
 \cite{Sch}.

A nonassociative ring $A\not=0$ (resp., an algebra $A\not=0$ over a
field $F$) is called a \emph{left division ring (resp., algebra)}, if
for all $a\in A$, $a\not=0$, the left multiplication  with $a$,
$L_a(x)=ax$, is a bijective map, and  a \emph{right division ring (resp.,
algebra)}, if for all $a\in A$, $a\not=0$, the right multiplication
with $a$, $R_a(x)=xa$, is a bijective map.
  An algebra $A\not=0$ over a field $F$ is called a {\it division algebra} if for all $a\in A$, $a\not=0$, both
the left and right multiplication with $a$ are bijective. A division
algebra $A$ does not have zero divisors. If $A$ is a
finite-dimensional algebra over $F$, then $A$ is a division algebra
over $F$ if and only if $A$ has no zero divisors \cite{Sch}.

A nonassociative ring $A\not=0$ has no zero divisors if and only if
$R_a$ and $L_a$ are injective for all $0\not=a\in A$.

Note that every algebra $A$ is a right ${\rm Nuc}_r(A)$-module and the left
multiplication $L_a$ is ${\rm Nuc}_r(A)$-linear for all $0\not=a\in A$.

%
%
%
\section{Nonassociative algebras obtained from skew polynomials} \label{subsec:structure}

Let $S$ be a unital associative ring and $S[t;\sigma,\delta]$ a skew polynomial
ring where $\sigma$ is injective.

\subsection{}
Assume $f(t)=\sum_{i=0}^{m}a_it^i \in R=S[t;\sigma,\delta]$ has an invertible leading coefficient $a_m\in S^\times$. Then
for all $g(t)\in R$ of degree $l\geq m$,  there exist  uniquely
determined $r(t),q(t)\in R$ with
 ${\rm deg}(r)<{\rm deg}(f)$, such that
$g(t)=q(t)f(t)+r(t),$ and if $\sigma\in{\rm Aut}(D)$, also  uniquely
determined $r(t),q(t)\in R$ with ${\rm deg}(r)<{\rm deg}(f)$, such
that $g(t)=f(t)q(t)+r(t)$ (\cite{CB},\cite[Proposition 1]{P15}).

Let ${\rm mod}_r f$ denote the remainder of right division by $f$
and ${\rm mod}_l f$ the remainder of left division by $f$.
The skew polynomials of degree less that $m$ canonically represent the elements of the (left resp. right)
$S[t;\sigma,\delta]$-modules $S[t;\sigma,\delta]/
S[t;\sigma,\delta]f$ and $S[t;\sigma,\delta]/ fS[t;\sigma,\delta]$.
Moreover, $$R_m=\{g\in S[t;\sigma,\delta]\,|\, {\rm deg}(g)<m\}$$
 together with the multiplication
   \[g\circ h=
  \begin{cases}
  gh  \text{ if } {\rm deg} (g)+{\rm deg} (h) < m,\\
  gh \,\,{\rm mod}_r f \text{ if } {\rm deg} (g)+{\rm deg} (h) \geq m,
   \end{cases}
  \]
is a unital nonassociative ring $S_f=(R_m,\circ)$ also denoted by
$R/Rf$.
\\  If $\sigma\in{\rm Aut}(S)$, then $R_m$  together with
   \[g\circ h=
  \begin{cases}
  gh  \text{ if } {\rm deg} (g)+{\rm deg} (h) < m,\\
  gh \,\,{\rm mod}_l f \text{ if } {\rm deg} (g)+{\rm deg} (h) \geq m,
   \end{cases}
  \]
is a unital nonassociative ring $\,_fS=(R_m,\circ)$ also denoted by
$R/fR$. When the context is clear, we will drop the $\circ$ notation and simply use juxtaposition for multiplication in $S_f$.

$S_f$ and $\,_fS$ are unital nonassociative algebras over the
commutative subring $$S_0=\{a\in S\,|\, ah=ha \text{ for all } h\in
S_f\}={\rm Comm}(S_f)\cap S$$
 of $S$.
 Note that
$$ C(S)\cap{\rm Fix}(\sigma)\cap {\rm Const}(\delta)\subset S_0.$$
 For all invertible $a\in S$ we have $S_f = S_{af}$, so that without loss of generality it suffices
to only consider monic polynomials in the construction.
  If $f$ has degree 1 then $S_f\cong S$.  If $f$ is reducible then $S_f$ contains zero divisors.

 In the following, we assume $m\geq 2$ and call the algebras $S_f$  \emph{Petit algebras}
as the construction goes back to Petit \cite{P66,P68} (who only
considered division rings $S$). We will  focus on the  algebras
$S_f$, since the algebras $\,_fS$ are anti-isomorphic to Petit algebras \cite[Proposition 3]{P15}.

Note that for $0\not=a\in S_f$, left multiplication $L_a$ is an
$S_0$-module endomorphism. Moreover,  $R_a$ is a left $S$-module homomorphism for $0 \neq a \in S_f$.

Let $f\in S[t;\sigma,\delta]$ have degree $m\geq 2$ and an
invertible leading coefficient. Then $S_f$ is a free left $S$-module  of rank $m$ with basis $t^0=1,t,\dots,t^{m-1}$.
 $S_f$ is associative if and only if $Rf$ is a two-sided ideal in $R$.
 If $S_f$ is not associative then
$S\subset{\rm Nuc}_l(S_f),\,\,S\subset{\rm Nuc}_m(S_f)$ and $$\{g\in
R\,|\, {\rm deg}(g)<m \text{ and }fg\in Rf\}= {\rm Nuc}_r(S_f).$$
When $S$ is a division ring, these inclusions become equalities.
We have $t\in {\rm Nuc}_r(S_f)$, if and only if the powers of $t$ are
associative, if and only if $t^mt=tt^m$ in $S_f$.
 If $S$ is a division ring and  $S_f$ is not associative then $C(S_f)=S_0.$
 Let $f(t)=\sum_{i=0}^{m}a_it^i\in S[t;\sigma]$ with $a_0$ invertible.
If the endomorphism  $L_t$ (i.e.  left multiplication by $t$)  is
surjective then $\sigma$ is surjective.
  In particular, if $S$ is a division ring and $f$ irreducible, then $L_t$ surjective implies $\sigma$ surjective.
Moreover, if $\sigma$ is bijective then $L_t$ is surjective \cite[Theorem 4]{P15}.

Note that $C(S_f)={\rm Comm}(S_f)\cap {\rm Nuc}_l(S_f)\cap {\rm Nuc}_m(S_f)\cap {\rm Nuc}_r(S_f)$ and
so
$$S_0=\{a\in S\,|\, ah=ha \text{ for all } h\in S_f\}={\rm Comm}(S_f)\cap S\subset C(S_f).$$
If ${\rm Nuc}_l(S_f)= {\rm Nuc}_m(S_f)=S$ this yields that the center
$C(S_f)={\rm Comm}(S_f)\cap S\cap {\rm Nuc}_r(S_f)={\rm Comm}(S_f)\cap S$  of $S_f$ is identical to the ring
$S_0.$

%
%

\section{The right nucleus of $S_f$} \label{section:Right Semi-Invariant Polynomials}

In this Section, let $D$ be a division algebra with center $F$, $R=D[t;\sigma,\delta]$
 with $\sigma$ any endomorphism of $D$ and $\delta$ any left $\sigma$-derivation. Let $f \in R = D[t;\sigma,\delta]$
 be monic of degree $m\geq2$.

The largest subalgebra of
$R=D[t;\sigma,\delta]$ in which $Rf$ is a two-sided ideal is the \emph{idealizer}  $I(f)=\{g\in R\,|\, fg\in Rf\}$ of $Rf$. The
\emph{eigenring} of $f$ is then defined as the quotient $E(f)=I(f)/Rf=\{g\in R\,|\, {\rm deg}(g)<m \text{ and } fg\in Rf\}$.
This is also the right nucleus of the algebra $S_f$ \cite[Theorem 4]{P15}.

 \subsection{Some general observations}  The right nucleus is important when finding right factors of $f$;
 if $ {\rm Nuc}_r(S_f)$ contains zero divisors then $f$ is reducible \cite{P66}.
If $u,v\in {\rm Nuc}_r(S_f)$ are non-zero such that $uv=0$, then
the greatest common right divisor $gcrd(f,u)$  is a non-trivial right
factor of $f$, see e.g.  \cite{P15}. This was employed for instance in \cite{GLN}.

 Moreover, if $ft\in Rf$ then $t\in {\rm Nuc}_r(S_f)$, hence the powers of $t$ are associative in $S_f$. This in turn implies $t^mt=tt^m$  \cite[Theorem
5]{P15}. Moreover, $ft\in Rf$  if and only if $t\in {\rm Nuc}_r(S_f)$, if and only if the powers of $t$ are
associative, if and only if $t^mt=tt^m$ \cite{P66}. This yields:

\begin{lemma}
Let  $f\in D[t;\sigma,\delta]$. If $t\not\in {\rm Nuc}_r(S_f)$ or $f\in Rt $
 then $S_f$ does not have any associative subalgebra that contains all powers of $t$.

 In particular, if  $f$ is irreducible and  $t\not\in {\rm Nuc}_r(S_f)$,
 then $S_f$ does not have any associative subalgebra that contains all powers of $t$ (and then $f$ cannot lie in $S_0[t]$).
\end{lemma}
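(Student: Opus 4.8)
The plan is to prove the contrapositive and to push everything through the chain of equivalences recalled immediately before the lemma: the powers of $t$ associate in $S_f$ precisely when $t\in{\rm Nuc}_r(S_f)$, equivalently when $ft\in Rf$, equivalently when $t^mt=tt^m$.

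First I would dispose of the hypothesis $t\notin{\rm Nuc}_r(S_f)$. Suppose, for contradiction, that $S_f$ has an associative subalgebra $A$ with $t^i\in A$ for all $i\ge 0$. Since $A$ is a subalgebra, the product of two powers of $t$ computed in $A$ coincides with the product computed in $S_f$, so the associativity of $A$ forces $[t^i,t^j,t^k]=0$ in $S_f$ for all $i,j,k\ge 0$; that is, the powers of $t$ associate in $S_f$. By the quoted equivalence this gives $t\in{\rm Nuc}_r(S_f)$, a contradiction. Conversely, since ${\rm Nuc}_r(S_f)$ is itself an associative subalgebra of $S_f$, whenever $t\in{\rm Nuc}_r(S_f)$ it already contains every power $t^i$; thus the existence of such a subalgebra is in fact \emph{equivalent} to $t\in{\rm Nuc}_r(S_f)$. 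This settles the first case, and hence the ``in particular'' statement for irreducible $f$. The parenthetical claim $f\notin S_0[t]$ then drops out: were $f\in S_0[t]$, its coefficients would lie in $S_0\subseteq C(S)\cap{\rm Fix}(\sigma)\cap{\rm Const}(\delta)$, so $f$ would commute with $t$, giving $ft=tf\in Rf$ and hence $t\in{\rm Nuc}_r(S_f)$, against the hypothesis.

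It remains to treat the hypothesis $f\in Rt$, and this is where I expect the real work. The strategy is to reduce it to the previous case by establishing $f\in Rt\Rightarrow t\notin{\rm Nuc}_r(S_f)$. Writing $f=gt$ with $g$ monic of degree $m-1$ (equivalently, $f$ has vanishing constant term) already exhibits the zero divisor $g\circ t=0$ in $S_f$, so $f$ is reducible. Assuming $t\in{\rm Nuc}_r(S_f)$, i.e. $ft\in Rf$, I would write $ft=hf$; since $ft=gt\cdot t=gt^2$ and $hf=hgt$, right-cancelling $t$ --- legitimate because $\sigma$ is injective, so right multiplication by $t$ is injective on $R$ --- yields $gt=hg$ with $h$ monic of degree $1$. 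The main obstacle is to rule out this last equation, for a general monic $g$ it can be solvable (for instance when $t$ and $g$ commute), so the case cannot be closed by a degree count alone. I expect the resolution to combine the proper factorization $f=gt$ with the zero divisor $g\circ t=0$ to contradict $t$'s membership in the \emph{associative} algebra ${\rm Nuc}_r(S_f)$; note that in the irreducible setting driving the paper this difficulty disappears entirely, since an irreducible $f$ of degree $m\ge 2$ can never lie in $Rt$.
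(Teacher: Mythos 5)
Your first case is correct and is essentially the paper's argument: an associative subalgebra $A$ containing all powers of $t$ forces the powers of $t$ to associate in $S_f$, hence $t\in{\rm Nuc}_r(S_f)$ by the equivalences recalled just before the lemma. Your direct derivation of the parenthetical claim (coefficients of $f$ in $S_0$ commute with $t$, so $f\in S_0[t]$ gives $ft=tf\in Rf$ and hence $t\in{\rm Nuc}_r(S_f)$) is a legitimate variant of the paper's, which instead exhibits $S_0[t]/(f)$ as an offending subalgebra.

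The gap is the case $f\in Rt$, which you acknowledge you cannot close; moreover the reduction you propose, namely $f\in Rt\Rightarrow t\notin{\rm Nuc}_r(S_f)$, is not merely hard but false, so no refinement of it will work. Take $\sigma=id$, $\delta=0$ and $f=t^m$: then $f=t^{m-1}\cdot t\in Rt$, yet $ft=t^{m+1}=t\cdot f\in Rf$, so $t\in{\rm Nuc}_r(S_f)$ (indeed $Rf$ is two-sided and $S_f$ is associative here). This realizes exactly the situation you flagged (``when $t$ and $g$ commute''). The two disjuncts in the hypothesis are independent obstructions, and the paper does not reduce one to the other: it invokes Petit's equivalence \cite[(8)]{P66}, that a subset $X\subseteq S_f$ which is a multiplicative group and contains all powers of $t$ exists if and only if $ft\in Rf$ \emph{and} $f\notin Rt$, and then takes $X=A$. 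The role of $f\notin Rt$ there is precisely the zero divisor you already wrote down: if $f=gt$ then $g\circ t=0$ with $0\neq g\in S_f$, so $t$ is a zero divisor and cannot belong to a multiplicative group; this is the missing step, and it must be aimed at the multiplicative structure containing $t$ rather than routed through ${\rm Nuc}_r(S_f)$. Note that this only yields a contradiction when ``subalgebra containing all powers of $t$'' is read in Petit's sense (so that $t$ is invertible there); for a plain associative subalgebra the example $f=t^m$ above, where $S_f$ itself is associative and contains all powers of $t$, shows the $f\in Rt$ half of the statement cannot be recovered by any argument, so your difficulty here reflects a genuine issue with the literal statement and not just with your proof.
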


\begin{proof}
There exists a subset $X$ of $S_f$ which is a multiplicative group and contains all powers of $t$, if
and only if $ft\in Rf$ and $f\not\in Rt $ \cite[(8)]{P66}, i.e. if
and only if $t\in {\rm Nuc}_r(S_f)$ and $f\not\in Rt $. Now suppose $A$ is an associative subalgebra of $S_f$ that contains
all powers of $t$, choose $X=A$ and obtain that $t\in {\rm Nuc}_r(S_f)$ and $f\not\in Rt $.

If $f$ is irreducible, we know that $f\not\in Rt $.
If, additionally, $f\in S_0[t]$ then $S_0[t]/(f)$ is a subalgebra of $S_f$ that contains all powers of $t$, a contradiction.
\end{proof}

 \begin{proposition}
 For all $f\in S_0[t]$,
$S_0[t]/(f)$ is a commutative subring of $S_f$ and
$$S_0[t]/(f)=S_0\oplus S_0t\oplus\dots\oplus S_0t^{m-1}\subset{\rm Nuc}_r(S_f).$$
If ${\rm Nuc}_r(S_f)$ is larger than $S_0[t]/(f)$, then ${\rm Nuc}_r(S_f)$ is not commutative.
\end{proposition}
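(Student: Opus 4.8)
The plan is to handle the three assertions in turn, the first two being essentially bookkeeping and the last one carrying the real content. I would begin by pinning down $S_0$ exactly: an $a\in S_0={\rm Comm}(S_f)\cap D$ must commute with every $b\in D\subseteq S_f$, forcing $a\in F=C(D)$, and must commute with $t$, which through $ta=\sigma(a)t+\delta(a)$ forces $\sigma(a)=a$ and $\delta(a)=0$; hence $S_0=F\cap{\rm Fix}(\sigma)\cap{\rm Const}(\delta)$. Because each element of $S_0$ is central in $D$ and commutes with $t$, the subset $S_0[t]\subseteq R$ is an \emph{ordinary} commutative polynomial ring. Since $f$ is monic with coefficients in $S_0$, right division by $f$ never leaves $S_0[t]$, so for $g,h\in S_0[t]$ of degree $<m$ the product $g\circ h=gh\,{\rm mod}_r f$ computed in $S_f$ agrees with the product in $S_0[t]/(f)$; this gives the commutative subring with basis $1,t,\dots,t^{m-1}$ over $S_0$. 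For the inclusion into the right nucleus I would use ${\rm Nuc}_r(S_f)=\{g\in R\mid {\rm deg}(g)<m,\ fg\in Rf\}$: for $g\in S_0[t]$ commutativity of $S_0[t]$ gives $fg=gf\in Rf$, so $g\in{\rm Nuc}_r(S_f)$. In particular $t\in{\rm Nuc}_r(S_f)$.

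For the third assertion I would argue by contraposition: assuming ${\rm Nuc}_r(S_f)$ is commutative, I aim to prove ${\rm Nuc}_r(S_f)\subseteq S_0[t]/(f)$, so that it cannot be strictly larger. Fix $x=\sum_{i=0}^{m-1}b_it^i\in{\rm Nuc}_r(S_f)$. Since $t\in{\rm Nuc}_r(S_f)$ and the nucleus is assumed commutative, $x\circ t=t\circ x$. The first step is to expand both sides, reducing the single top-degree term ($\sigma(b_{m-1})t^m$ on one side, $b_{m-1}t^m$ on the other) via $t^m\equiv\sum_j r_jt^j\,{\rm mod}_r f$ with $r_j\in S_0$, and to compare coefficients. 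Setting $e_i=\sigma(b_i)-b_i$ (with the analogous $\delta$-contributions when $\delta\ne0$), this produces a triangular system expressing each $e_i$ through $e_{m-1}$ together with the relation $e_{m-1}r_0=0$. As $D$ is a division ring, if the constant term of $f$ is nonzero then $r_0\ne0$ and $e_{m-1}=0$, whence $b_i\in{\rm Fix}(\sigma)\cap{\rm Const}(\delta)$ for all $i$. The second step is to feed these coefficients back into $fx\in Rf$ and into the pairwise commutativity of ${\rm Nuc}_r(S_f)$ to force each $b_i$ into $S_0=F\cap{\rm Fix}(\sigma)\cap{\rm Const}(\delta)$, giving $x\in S_0[t]/(f)$.

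The hard part will be exactly this last centrality step. Commuting with $t$ only constrains the coefficients to be $\sigma$-fixed and $\delta$-annihilated; upgrading this to membership in the \emph{center} $F$ of $D$ is the delicate point, since the natural witnesses $c\in D$ with $cb_i\ne b_ic$ to the non-centrality of a coefficient need not themselves lie in ${\rm Nuc}_r(S_f)$ and so cannot be used directly to contradict commutativity of the nucleus. The crux is therefore to show that a coefficient lying in $({\rm Fix}(\sigma)\cap{\rm Const}(\delta))\setminus F$ already forces two \emph{nucleus} elements not to commute — for instance by exploiting that $S_f$ is a right ${\rm Nuc}_r(S_f)$-module on which the left multiplications $L_a$, $a\in D$, act ${\rm Nuc}_r(S_f)$-linearly, and extracting from this bimodule structure a second nucleus element that detects the non-centrality. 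I expect this passage from $\sigma$- and $\delta$-invariance to centrality, rather than the coefficient bookkeeping, to be the main obstacle, and it is precisely the place where extra hypotheses on $f$ (such as a nonzero constant term, or irreducibility) must be brought to bear.
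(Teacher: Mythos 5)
Your handling of the first two assertions is correct and in fact more self-contained than the paper's: you verify directly that $fg=gf\in Rf$ for every $g\in S_0[t]$ of degree $<m$ and conclude $g\in E(f)={\rm Nuc}_r(S_f)$, whereas the paper deduces $t\in{\rm Nuc}_r(S_f)$ from the associativity of the powers of $t$ via \cite[Theorem 4]{P15} and then uses that the right nucleus is an associative subalgebra containing $S_0$. Both routes establish $S_0[t]/(f)=S_0\oplus S_0t\oplus\dots\oplus S_0t^{m-1}\subset{\rm Nuc}_r(S_f)$, and your identification of $S_0$ as $C(D)\cap{\rm Fix}(\sigma)\cap{\rm Const}(\delta)$ matches the paper's.

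For the last assertion your proposal is, as you say yourself, incomplete: commuting with $t$ only pushes the coefficients into ${\rm Fix}(\sigma)\cap{\rm Const}(\delta)$, and you leave open how to force them into $C(D)$. Be aware that the paper supplies no argument here either --- its entire proof of this part is the sentence ``The last part is trivial then.'' The obstacle you isolated is in fact fatal at this level of generality, so no amount of cleverness with the bimodule structure will close it. Concretely, let $D$ be the rational quaternions with $C(D)=\mathbb{Q}$, let $\sigma=I_i\colon x\mapsto ixi^{-1}$ and $\delta=0$, so that ${\rm Fix}(\sigma)=\mathbb{Q}(i)$ and $S_0=\mathbb{Q}$, and take $f(t)=t^3-a$ with $a\in\mathbb{Q}^{\times}$. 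Since $\sigma^3=\sigma\neq{\rm id}$, $f$ is not right semi-invariant and $S_f$ is not associative; solving $fg=qf$ coefficientwise shows $E(f)={\rm Nuc}_r(S_f)=\mathbb{Q}(i)\oplus\mathbb{Q}(i)t\oplus\mathbb{Q}(i)t^2\cong\mathbb{Q}(i)[u]/(u^3-a)$, which is commutative yet strictly larger than $S_0[t]/(f)=\mathbb{Q}[u]/(u^3-a)$. So the third claim requires extra hypotheses (for instance $D$ commutative with $f\notin Rt$, or ${\rm Fix}(\sigma)\cap{\rm Const}(\delta)\subseteq C(D)$), exactly as you suspected: the gap you flagged is a gap in the proposition and in the paper's ``trivial'' step, not merely in your attempt.
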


\begin{proof}
$S_f$ contains the commutative subring $S_0[t]/(f)$, where  $S_0={\rm Const}(\delta)\cap C(D)\cap {\rm Fix}(\sigma)$. This subring is isomorphic to the ring consisting of the
elements $\sum_{i=0}^{m-1}a_it^i$ with $a_i\in S_0$.
 In particular, we know that  the powers of $t$ are associative.
By Theorem  \cite[Theorem 4]{P15},
this implies that $t\in {\rm Nuc}_r(S_f)$. Clearly $S_0\subset {\rm Nuc}_r(S_f)$, so if $t\in {\rm Nuc}_r(S_f)$  then
 $S_0\oplus S_0t\oplus\dots\oplus S_0t^{m-1}\subset  {\rm Nuc}_r(S_f)$, hence we obtain the assertion.
 The last part is trivial then.
\end{proof}

 If $f\in S_0[t]$ is irreducible in $S_0[t]$, then $S_0[t]/(f)$ is an algebraic field extension of $S_0$ of degree $m$ contained in
 ${\rm Nuc}_r(S_f).$ Thus if $K$ is a finite field, $\delta=0$ and $f$ irreducible, then
${\rm Nuc}_r(S_f)=F\oplus Ft\oplus\dots\oplus Ft^{m-1}=F[t]/(f)$, employing the fact that in this case we know that the right nucleus has exactly $|F[t]/(f)|$ elements \cite{LS}.

\subsection{Right semi-invariant polynomials} We first investigate for which $f$ the algebra $D$ is
contained in the right nucleus of $S_f$. By \cite[Theorem 4]{P15}, this implies  that either
$S_f$ is associative or $\mathrm{Nuc}(S_f) = D$.

Recall that $f \in R=D[t;\sigma,\delta]$ is called \emph{right semi-invariant} if for every $a\in D$ there is $b\in D$ such that $f(t)a=bf(t)$ which is
equivalent to $fD \subseteq Df$. Similarly, $f$ is
\emph{left semi-invariant} if $Df \subseteq fD$ \cite{LL0, LLLM}. Moreover, $f$ is right semi-invariant if and only if
$df$ is right semi-invariant for all $d \in D^{\times}$ \cite[p.~8]{LL0}. Hence we  only need to consider monic $f$.
Furthermore, if $\sigma$ is an automorphism, then $f$ is right semi-invariant if and only if it is left semi-invariant if and only
if $fD = Df$ \cite[Proposition 2.7]{LL0}.
 Right semi-invariant polynomials canonically  arise in the theory of semi-linear transformations \cite{J37}.
For a thorough background on right semi-invariant polynomials see \cite{LL0, LLLM}.

If $f$ is semi-invariant and also satisfies $f(t)t=(bt+a)f(t)$ for some $a,b\in D$ then
 $f$ is called \emph{right invariant} which is equivalent to $fR\subset Rf$.
  If $f$ is right invariant then $Rf$ is a two-sided ideal in $R$ and conversely, every two-sided ideal in $R$ is generated by a
  right-invariant polynomial.
  That means $R$ is not simple
  if and only if there is a non-constant right-invariant $f\in R$. Moreover, assuming $\sigma$ is an automorphism,
  $R$ is not simple
  if and only if there is a non-constant monic semi-invariant $f\in R$ if and only if $\delta$ is a quasi-algebraic
  derivation  \cite{LLLM} (this last observation actually holds for any simple ring $D$).

\begin{theorem} \label{thm:semi-invariant iff D contained in E(f)}
$f \in R$ is right semi-invariant if and only if $D \subseteq \mathrm{Nuc}_r(S_f)$. In particular, if $f$ is right
semi-invariant, then either $\mathrm{Nuc}(S_f) = D$ or $S_f$ is associative.
\end{theorem}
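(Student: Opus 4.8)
The plan is to prove the equivalence by reducing it to a statement about constant polynomials, exploiting the identification $\mathrm{Nuc}_r(S_f) = E(f) = \{g \in R \mid {\rm deg}(g) < m \text{ and } fg \in Rf\}$ recorded in Section~\ref{subsec:structure} and in \cite[Theorem 4]{P15}. Since $m \geq 2$, every element $a \in D$, viewed as a constant polynomial, has degree $0 < m$, so $a \in \mathrm{Nuc}_r(S_f) = E(f)$ if and only if $fa \in Rf$. Thus the condition $D \subseteq \mathrm{Nuc}_r(S_f)$ is equivalent to requiring $fa \in Rf$ for every $a \in D$. The task is then to match this with the definition of right semi-invariance, namely $fD \subseteq Df$, i.e. that for each $a$ there is $b \in D$ with $f(t)a = bf(t)$.

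The heart of the argument is to show that, for a constant $a \in D$, the containment $fa \in Rf$ is actually equivalent to $fa \in Df$. One inclusion is free: $Df \subseteq Rf$, so $fa \in Df$ trivially gives $fa \in Rf$, which already yields the easy direction (if $f$ is right semi-invariant then $fa = bf \in Rf$ for all $a$, hence $D \subseteq \mathrm{Nuc}_r(S_f)$). For the converse I would use a degree count. If $a = 0$ there is nothing to prove, so assume $a \neq 0$. Because $f$ is monic of degree $m$, the leading coefficient of $fa$ equals $\sigma^m(a)$, which is nonzero since $\sigma$ is injective; hence ${\rm deg}(fa) = m = {\rm deg}(f)$. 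Writing $fa = qf$ with $q \in R$ and using that $f$ has invertible (indeed unit) leading coefficient, the degree formula forces ${\rm deg}(q) = {\rm deg}(fa) - {\rm deg}(f) = 0$, so $q \in D$. Therefore $fa = qf \in Df$, and setting $b = q$ realises the right semi-invariance condition. Combining both inclusions gives $D \subseteq \mathrm{Nuc}_r(S_f)$ if and only if $fD \subseteq Df$, which is exactly right semi-invariance.

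For the ``in particular'' clause I would invoke the structural facts from Section~\ref{subsec:structure}. Suppose $f$ is right semi-invariant, so $D \subseteq \mathrm{Nuc}_r(S_f)$ by the equivalence just established. If $S_f$ is associative we are done. Otherwise $S_f$ is not associative, and since $D$ is a division ring the inclusions $D \subseteq \mathrm{Nuc}_l(S_f)$ and $D \subseteq \mathrm{Nuc}_m(S_f)$ become the equalities $\mathrm{Nuc}_l(S_f) = \mathrm{Nuc}_m(S_f) = D$. Intersecting, $\mathrm{Nuc}(S_f) = \mathrm{Nuc}_l(S_f) \cap \mathrm{Nuc}_m(S_f) \cap \mathrm{Nuc}_r(S_f) = D \cap \mathrm{Nuc}_r(S_f) = D$, where the last step uses $D \subseteq \mathrm{Nuc}_r(S_f)$. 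This yields the dichotomy: either $S_f$ is associative or $\mathrm{Nuc}(S_f) = D$.

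The only genuinely non-formal step, and the one I expect to be the main obstacle, is the degree argument pinning down the left cofactor $q$ as a constant. Everything hinges on $\sigma$ being injective so that multiplying $f$ on the right by a nonzero constant does not drop the degree; without this one could not conclude $q \in D$, and the equivalence with $fD \subseteq Df$ (rather than the weaker $fD \subseteq Rf$) would fail. Once that point is settled, the remainder is a routine assembly of the eigenring description and the nucleus facts for division rings already available in the preliminaries.
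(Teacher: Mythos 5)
Your proposal is correct and follows essentially the same route as the paper's proof: the forward direction via $fD \subseteq Df \subseteq Rf$ together with the identification $E(f) = \mathrm{Nuc}_r(S_f)$, the converse via a degree comparison forcing the left cofactor $q$ into $D$, and the dichotomy for $\mathrm{Nuc}(S_f)$ from the nucleus facts of \cite[Theorem 4]{P15}. You merely spell out the degree argument (injectivity of $\sigma$ keeping $\deg(fa)=m$) in more detail than the paper's one-line ``comparing degrees.''
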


\begin{proof}
If $f \in R$ is right semi-invariant, $fD \subseteq Df\subseteq Rf$ and hence $D \subseteq E(f) = \mathrm{Nuc}_r(S_f)$.
Conversely, if $D \subseteq \mathrm{Nuc}_r(S_f) = E(f)$ then for all $d \in D$, there exists $q(t) \in R$ such that
 $f(t)d = q(t)f(t)$.
Comparing degrees, we see $q(t) \in D$ and thus $fD \subseteq Df$.

The second assertion follows by \cite[Theorem 4]{P15}.
\end{proof}

\begin{proposition} \label{prop:lemonnier semi-invariant}
(\cite[(9.21)]{Le}). Suppose $\sigma$ is an automorphism of $D$, then the following are equivalent:
\\ (i) There exists a non-constant right semi-invariant polynomial in $R$.
\\ (ii) $R$ is not simple.
\\ (iii) There exist $b_0, \ldots, b_n \in D$ with $b_n \neq 0$ such that
$b_0 \delta_{c, \theta} + \sum_{i=1}^{n} b_i \delta^i = 0$, where
$\theta$ is an endomorphism of $D$ and $\delta_{c,\theta}$ denotes
the $\theta$-derivation of $D$ sending $x \in D$ to $c x - \theta(x)
c$.
\end{proposition}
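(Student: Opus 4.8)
The plan is to route everything through the two equivalences that are essentially already available above, and to supply the one genuinely new link by a direct coefficient computation. The equivalence (i)$\Leftrightarrow$(ii) is in substance established earlier: since every two-sided ideal of $R$ is generated by a right-invariant polynomial, non-simplicity produces a non-constant right-invariant $f$, which is in particular right semi-invariant, giving (ii)$\Rightarrow$(i); and, because $\sigma$ is an automorphism, the converse is exactly the observation recalled above that $R$ fails to be simple as soon as a non-constant monic semi-invariant polynomial exists \cite{LLLM}. As right semi-invariance is preserved under left multiplication by a unit \cite[p.~8]{LL0}, we may always normalise such a polynomial to be monic. So the real work is to tie (i)/(ii) to the explicit operator identity (iii).

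For (i)$\Rightarrow$(iii) I would take a non-constant right semi-invariant $f$, normalised to be monic, say $f(t)=\sum_{i=0}^{n}a_it^i$ with $a_n=1$ and $n\geq 1$. Right semi-invariance means $f(t)d=\psi(d)f(t)$ for all $d\in D$ and some map $\psi$; comparing the coefficients of $t^n$ forces $\psi=\sigma^n$. Expanding both sides via $t^i d=\sum_{j=0}^{i}\Delta_{i,j}(d)t^j$ and comparing constant terms, and using $\Delta_{i,0}=\delta^i$, the $t^0$-coefficient of $f(t)d$ equals $a_0d+\sum_{i=1}^{n}a_i\delta^i(d)$, while that of $\sigma^n(d)f(t)$ is $\sigma^n(d)a_0$. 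Hence for all $d\in D$
\[ \sum_{i=1}^{n}a_i\delta^i(d)=\sigma^n(d)a_0-a_0d=-\delta_{a_0,\sigma^n}(d), \]
that is, $\delta_{a_0,\sigma^n}+\sum_{i=1}^{n}a_i\delta^i=0$ as additive maps $D\to D$. This is exactly (iii) with $\theta=\sigma^n$, $c=a_0$, $b_0=1$, $b_i=a_i$ for $i\geq 1$, and $b_n=1\neq 0$.

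For the converse (iii)$\Rightarrow$(i) I would reverse this computation, and this is where the real difficulty lies. Left-multiplying the identity in (iii) by $b_n^{-1}$ makes the coefficient of $\delta^n$ equal to $1$, and the constant-term equation above dictates the monic $f=t^n+\sum_{i<n}a_it^i$ to aim for: its constant coefficient must be $c$ and we need $\theta=\sigma^n$. One then tries to solve the remaining conditions, namely the vanishing of the $t^j$-coefficients of $f(t)d-\sigma^n(d)f(t)$ for $1\leq j\leq n-1$, recursively from $j=n-1$ downwards. Each such condition asks for an $a_j$ with $a_j\sigma^j(d)-\sigma^n(d)a_j$ equal to a prescribed additive function of $d$, i.e. for that function to be a $\sigma^{n-j}$-inner derivation; this is not automatic. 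Showing that (iii) forces precisely the compatibilities needed to run the recursion, and that the a priori arbitrary endomorphism $\theta$ may always be taken to be $\sigma^n$, is the main obstacle. Since (iii) is exactly the defining identity for $\delta$ to be a quasi-algebraic derivation, I would discharge this step by invoking the equivalence between non-simplicity of $R$ and quasi-algebraicity of $\delta$ recalled above \cite{LLLM}, which closes the loop (iii)$\Leftrightarrow$(ii)$\Leftrightarrow$(i).
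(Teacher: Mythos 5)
Your proposal is sound, but it is worth noting that the paper itself supplies no proof of this proposition: it is quoted verbatim as a known result from Lemonnier's thesis \cite[(9.21)]{Le}, so there is no in-paper argument to compare against. What you add is a genuine (partial) proof. Your direction (i)$\Rightarrow$(iii) is correct and is in effect the $j=0$ case of the coefficient identity \eqref{eqn:right semi-invariant 1} in Theorem \ref{thm:right semi invariant conditions}(i): for monic $f$ of degree $n$, right semi-invariance forces $f(t)d=\sigma^n(d)f(t)$, and since $\Delta_{i,0}=\delta^i$, comparing constant terms gives $\delta_{a_0,\sigma^n}+\sum_{i=1}^n a_i\delta^i=0$ with the signs matching the stated convention $\delta_{c,\theta}(x)=cx-\theta(x)c$. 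Your (ii)$\Rightarrow$(i) via a right-invariant generator of a proper nonzero two-sided ideal is also fine (such a generator cannot be a unit or zero, hence is non-constant). The only caveat is the closing of the loop: you correctly identify that (iii)$\Rightarrow$(i) cannot be run by naive downward recursion on the coefficients (the intermediate conditions ask for certain additive maps to be $\sigma^{n-j}$-inner derivations, which is not automatic), and you discharge it by citing the equivalence of non-simplicity with quasi-algebraicity of $\delta$ from \cite{LLLM}. That is legitimate only insofar as ``quasi-algebraic'' is defined in \cite{LLLM} by precisely the operator identity in (iii) (which it is, up to normalisation of $b_n$), so your argument is a correct reduction rather than a self-contained proof of the hard implication --- which puts it on the same footing as the paper's own treatment, with the bonus of an explicit and verifiable computation for (i)$\Rightarrow$(iii).
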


Combining Theorem \ref{thm:semi-invariant iff D contained in E(f)}
and Proposition \ref{prop:lemonnier semi-invariant} we conclude:

\begin{corollary} \label{cor:simple and semi-invariant implies invariant}
Suppose $\sigma$ is an automorphism of $D$ and $R$ is simple. Then
there are no nonassociative algebras $S_f$ with $D \subseteq
\mathrm{Nuc}_r(S_f)$. In particular, there are no nonassociative
algebras $S_f$ with $D \subseteq \mathrm{Nuc}(S_f)$.
\end{corollary}

\begin{proof}
$R$ is not simple if and only if there exists a non-constant right
semi-invariant polynomial in $R$ by Proposition \ref{prop:lemonnier
semi-invariant}, and hence the assertion follows by Theorem
\ref{thm:semi-invariant iff D contained in E(f)}.
\end{proof}

Corollary \ref{cor:simple and semi-invariant implies invariant} actually also holds when $f\in S[t;\sigma,\delta]$,
where $S$ is only a simple ring and $\sigma$  an automorphism of $S$  \cite[Theorem 5.2]{LLLM}.

Recall that if $S$ is a division ring, or if $S$ is a simple ring and $\sigma\in{\rm
Aut}(S)$, then $R=S[t;\sigma,\delta]$ is not simple if and only if $\delta$ is
quasi-algebraic  \cite{LLLM}.

Recall also that $\sigma$ is an automorphism of $D$ \emph{of finite inner order} $k$ if
$\sigma^k = I_u$ for some $u \in D^{\times}$.
Using Theorem \ref{thm:semi-invariant iff D contained in E(f)} we can
rephrase the results \cite[Lemma 2.2, Corollary 2.12, Propositions 2.3 and 2.4]{LL0},
\cite[Corollary 2.6]{LLLM} on right semi-invariant polynomials in terms of the right nucleus of the nonassociative algebra $S_f$:

\begin{theorem} \label{thm:right semi invariant conditions}
 Let $f(t) = \sum_{i=0}^{m} a_i t^i \in
R$ be monic of degree $m$.
\\ (i) $D \subseteq \mathrm{Nuc}_r(S_f)$ if and only if $f(t)c = \sigma^m(c) f(t)$ for all $c \in D$, if and only if
\begin{equation} \label{eqn:right semi-invariant 1}
\sigma^m(c)a_j = \sum_{i=j}^{m} a_i \Delta_{i,j}(c)
\end{equation}
for all $c \in D$ and $j \in \{ 0, \ldots, m-1 \}$.
\\ (ii) Suppose $\sigma$ is an automorphism of $D$ of infinite inner order. Then $D \subseteq \mathrm{Nuc}_r(S_f)$
implies $S_f$ is associative.
\\ (iii) Suppose $\delta = 0$. Then $D \subseteq \mathrm{Nuc}_r(S_f)$ if and only if
\begin{equation} \label{eqn:right semi-invariant 2}
\sigma^m(c) = a_j \sigma^j(c) a_j^{-1}
\end{equation}
for all $c \in D$ and all $j \in \{ 0, \ldots, m-1 \}$ with $a_j \neq
0$. Furthermore, $S_f$ is associative if and only if $f(t)$ satisfies
\eqref{eqn:right semi-invariant 2} and $f(t)\in \mathrm{Fix}(\sigma)[t]\subset \mathrm{Fix}(\sigma)[t;\sigma]$.
\\ (iv) Suppose $\sigma = id$. Then $D \subseteq \mathrm{Nuc}_r(S_f)$ is equivalent to
\begin{equation} \label{eqn:right semi-invariant 3}
c a_j = \sum_{i=j}^{m} \binom{i}{j} a_i \delta^{i-j}(c),
\end{equation}
for all $c \in D$, $j \in  \{ 0, \ldots, m-1 \}$. Furthermore, $S_f$
is associative if and only if $f(t)$ satisfies \eqref{eqn:right
semi-invariant 3} and $f(t)\in \mathrm{Const}(\delta)[t]\subset \mathrm{Const}(\delta)[t;\delta] $.
\\ (v)  Suppose $\delta = 0$ and $\sigma$ is an automorphism of $D$ of finite inner order $k$, i.e. $\sigma^k=I_u$ for some $u\in D^\times$. Then the polynomials $g
\in D[t;\sigma]$ such that $D \subseteq \mathrm{Nuc}_r(S_g)$ are
precisely those of the form
\begin{equation} \label{eqn:right semi-invariant 4}
g(t)=b \sum_{j=0}^{n} c_j u^{n-j}t^{jk},
\end{equation}
where $n \in \mathbb{N}$, $c_n = 1$, $c_j \in F$ and $b \in
D^{\times}$. Furthermore, $S_g$ is associative if and only if $g(t)$
has the form \eqref{eqn:right semi-invariant 4} and
$g(t)\in \mathrm{Fix}(\sigma)[t]\subset \mathrm{Fix}(\sigma)[t;\sigma]$.
\end{theorem}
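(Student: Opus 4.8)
The plan is to prove Theorem~\ref{thm:right semi invariant conditions} by invoking Theorem~\ref{thm:semi-invariant iff D contained in E(f)}, which already identifies $D \subseteq \mathrm{Nuc}_r(S_f)$ with $f$ being right semi-invariant, and then translating each known characterization of right semi-invariance from \cite{LL0, LLLM} into the stated coefficient conditions. The unifying computational tool throughout is the product $f(t)c$ for $c \in D$, expanded via the skew multiplication rule $t^i c = \sum_{j=0}^{i} \Delta_{i,j}(c)\, t^j$.

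For part (i), I would first establish the degree observation: if $f$ is right semi-invariant, then $f(t)c = q(t)f(t)$ with $q$ constant by comparing degrees (as in the proof of Theorem~\ref{thm:semi-invariant iff D contained in E(f)}), and reading off the leading coefficient (using $a_m = 1$) forces $q = \sigma^m(c)$. So $D \subseteq \mathrm{Nuc}_r(S_f)$ is equivalent to $f(t)c = \sigma^m(c)f(t)$ for all $c$. Then I would expand both sides in the basis $1, t, \dots, t^m$: the left side gives $\sum_{i=0}^m a_i t^i c = \sum_{i=0}^m a_i \sum_{j=0}^{i} \Delta_{i,j}(c) t^j = \sum_{j=0}^m \bigl( \sum_{i=j}^m a_i \Delta_{i,j}(c) \bigr) t^j$, while the right side is $\sum_{j=0}^m \sigma^m(c) a_j t^j$. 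The top coefficient ($j=m$) is automatic since $\Delta_{m,m} = \sigma^m$ and $a_m = 1$; equating the coefficients of $t^j$ for $j \in \{0, \dots, m-1\}$ yields precisely \eqref{eqn:right semi-invariant 1}.

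Parts (iii) and (iv) are specializations of (i). When $\delta = 0$ we have $\Delta_{i,j} = 0$ unless $i = j$, where $\Delta_{j,j} = \sigma^j$, so \eqref{eqn:right semi-invariant 1} collapses to $\sigma^m(c) a_j = a_j \sigma^j(c)$, which is \eqref{eqn:right semi-invariant 2} after multiplying by $a_j^{-1}$ on the right (for $a_j \neq 0$). When $\sigma = id$, the operators $\sigma$ and $\delta$ commute and $\Delta_{i,j} = \binom{i}{j}\delta^{i-j}$, giving \eqref{eqn:right semi-invariant 3} directly. For part (ii), I would cite that an automorphism of infinite inner order, combined with right semi-invariance, forces the associated derivation to be quasi-algebraic in a degenerate way that makes $f$ right invariant; alternatively one argues via \cite[Corollary 2.12]{LL0} that the only right semi-invariant polynomials are essentially constants times powers fixing $D$, so $fD = Df$ yields $Rf$ two-sided and $S_f$ associative by the criterion in Section~\ref{subsec:structure}. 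Part (v) I would derive from \cite[Corollary 2.6]{LLLM}, verifying that the displayed form \eqref{eqn:right semi-invariant 4} is exactly the general right semi-invariant polynomial when $\sigma^k = I_u$, since then $\sigma^m = id$ precisely when $k \mid m$, and the central coefficients $c_j \in F$ interact with $u$ so that \eqref{eqn:right semi-invariant 2} holds degree-by-degree.

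The associativity addenda in (iii), (iv), (v) each require the extra step of upgrading right semi-invariance ($fD \subseteq Df$) to right invariance ($fR \subseteq Rf$), which by Section~\ref{subsec:structure} is equivalent to $S_f$ being associative. Given $D \subseteq \mathrm{Nuc}_r(S_f)$, associativity holds iff additionally $t \in \mathrm{Nuc}_r(S_f)$, i.e. iff $ft \in Rf$; I would show this is exactly the condition that the coefficients of $f$ lie in the relevant fixed/constant subring (so that $f$ commutes with $t$ up to a left factor), giving the stated membership $f \in \mathrm{Fix}(\sigma)[t]$ or $f \in \mathrm{Const}(\delta)[t]$. The main obstacle I anticipate is part (ii): the passage from ``infinite inner order plus semi-invariance'' to full associativity is the least mechanical, since it relies on structural facts about quasi-algebraic derivations rather than a direct coefficient comparison, and I would need to check carefully that infinite inner order rules out any genuinely nonassociative $S_f$ with $D$ in its right nucleus rather than merely constraining the leading behaviour.
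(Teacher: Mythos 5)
Your overall strategy is exactly the paper's: Theorem \ref{thm:semi-invariant iff D contained in E(f)} converts $D \subseteq \mathrm{Nuc}_r(S_f)$ into right semi-invariance of $f$, and the theorem is then read off from the Lam--Leroy(--Leung--Matczuk) characterizations of right semi-invariant polynomials. The paper in fact gives no proof at all beyond this remark and the citations to \cite{LL0} and \cite{LLLM}; your explicit coefficient computations for (i), (iii), (iv) --- expanding $f(t)c$ via $t^ic=\sum_j\Delta_{i,j}(c)t^j$, identifying the leading coefficient as $\sigma^m(c)$, and specializing $\Delta_{i,j}$ when $\delta=0$ or $\sigma=\mathrm{id}$ --- are correct and coincide with the argument the paper does write out later for the $B$-weak analogue, Proposition \ref{prop:L-weak semi invariant conditions}.

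One step in your sketch of (ii) is misstated and would fail if taken literally: $fD=Df$ does \emph{not} by itself make $Rf$ a two-sided ideal, since two-sidedness also requires $ft\in Rf$ (this is precisely the gap between ``right semi-invariant'' and ``right invariant'' that your associativity addenda for (iii)--(v) correctly exploit). The fact you actually need for (ii) is the one in the propositions of \cite{LL0} cited by the paper: when $\sigma$ is an automorphism of infinite inner order, every right semi-invariant polynomial is automatically right invariant, and only then does $Rf$ become two-sided and $S_f$ associative. With that citation substituted for the flawed inference, your argument for (ii) goes through, and the rest of the proposal is sound.
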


\subsection{Right $B$-weak semi-invariant polynomials}
Let now $B$ be a subring of $D$. We can find conditions on $f$ such that $B$
is contained in $ \mathrm{Nuc}_r(S_f)$ by generalizing the definition
of right semi-invariant polynomials as follows: we say $f \in D[t;\sigma,\delta]$ is \emph{(right) $B$-weak semi-invariant} if
$fB \subseteq D f$. Clearly any right semi-invariant polynomial is also $B$-weak semi-invariant for every  subring $B$ of $D$.

We call $f\in R$ a  \emph{(right) $B$-weak invariant}  polynomial if $f$ is
right $B$-weak semi-invariant and $f(t)t=(bt+a)f(t)$ for some $a,b\in B$.

 Note that when we have an extension of rings  $B\subset D$, which induces an extension of skew-polynomial rings
$B[t,\sigma,\delta]\subset D[t,\sigma,\delta]$ (i.e., $\sigma|_B=\sigma$, $\delta|_B=\delta$),
 every right semi-invariant $f\in B[t,\sigma,\delta]$ is right $B$-weak semi-invariant in $D[t,\sigma,\delta]$,
 and  every invariant $f\in B[t,\sigma,\delta]$ is right $B$-weak invariant in $D[t,\sigma,\delta]$.

\begin{example}
Let $K$ be a field, $\sigma$ be a non-trivial automorphism of $K$, $L
= \mathrm{Fix}(\sigma^j)$ be the fixed field of $\sigma^j$ for some $j >1$ and $f(t) = \sum_{i=0}^{n} a_i t^{ij} \in K[t;\sigma]$. Then
\begin{align*}
f(t)l &= \sum_{i=0}^{n} a_i t^{ij} l = \sum_{i=0}^{n} a_i\sigma^{ij}(l) t^{ij} = \sum_{i=0}^{n} a_i l t^{ij} = l f(t),
\end{align*}
for all $l \in L$ and hence $f L \subseteq L f$. In particular, $f$ is $L$-weak semi-invariant.
\end{example}

\begin{proposition} \label{prop:L-weak semi-invariant iff L subset Nuc_r(S_f)}
Let $B$ be a subring of $D$.
\\ (i) $f$ is $B$-weak semi-invariant if and only if $B \subseteq  \mathrm{Nuc}_r(S_f)$.
\\ (ii) If $f$ is $B$-weak semi-invariant but not right invariant, then $B \subseteq \mathrm{Nuc}(S_f) \subseteq D$.
\end{proposition}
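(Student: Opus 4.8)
The plan is to treat part (i) as the natural $B$-relativization of Theorem \ref{thm:semi-invariant iff D contained in E(f)}, using the identification $\mathrm{Nuc}_r(S_f) = E(f) = \{g \in R \mid \deg(g) < m \text{ and } fg \in Rf\}$ from \cite[Theorem 4]{P15}. For the forward direction of (i) I would take $b \in B$; by $B$-weak semi-invariance $fb \in Df \subseteq Rf$, and since $\deg(b) = 0 < m$, the element $b$ lies in $E(f) = \mathrm{Nuc}_r(S_f)$. Letting $b$ range over $B$ gives $B \subseteq \mathrm{Nuc}_r(S_f)$.

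For the converse, suppose $B \subseteq \mathrm{Nuc}_r(S_f) = E(f)$ and fix $0 \neq b \in B$. Then $fb \in Rf$, say $fb = qf$ with $q \in R$. Since $f$ is monic of degree $m$ and $\sigma$ is injective, right multiplication by the constant $b$ leaves the leading degree unchanged, so $\deg(fb) = m$; comparing with $\deg(qf) = \deg(q) + m$ forces $\deg(q) = 0$, i.e. $q \in D$. Thus $fb \in Df$, and so $fB \subseteq Df$, meaning $f$ is $B$-weak semi-invariant. This is exactly the degree-counting step used in the proof of Theorem \ref{thm:semi-invariant iff D contained in E(f)}, now applied to a single constant rather than to all of $D$.

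For part (ii), the decisive observation is that failure of right invariance forces $S_f$ to be nonassociative: since $f$ is monic, $S_f$ is associative if and only if $Rf$ is a two-sided ideal, i.e. if and only if $f$ is right invariant (\cite[Theorem 4]{P15}). As $f$ is assumed not right invariant, $S_f$ is not associative. Because $D$ is a division ring, the structural results recalled in Section \ref{subsec:structure} then give the equalities $\mathrm{Nuc}_l(S_f) = \mathrm{Nuc}_m(S_f) = D$. Combining these with $B \subseteq \mathrm{Nuc}_r(S_f)$ from part (i) and with $B \subseteq D$ yields $B \subseteq \mathrm{Nuc}_l(S_f) \cap \mathrm{Nuc}_m(S_f) \cap \mathrm{Nuc}_r(S_f) = \mathrm{Nuc}(S_f)$, while $\mathrm{Nuc}(S_f) \subseteq \mathrm{Nuc}_l(S_f) = D$ gives the second inclusion.

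I expect the only real obstacle to be bookkeeping rather than computation: one must use the hypothesis ``not right invariant'' precisely to rule out associativity, so that the division-ring equalities $\mathrm{Nuc}_l(S_f) = \mathrm{Nuc}_m(S_f) = D$ become available, and one must note that $B$-weak semi-invariance, unlike right invariance, does \emph{not} by itself force $t \in \mathrm{Nuc}_r(S_f)$ — which is why (ii) stops at $\mathrm{Nuc}(S_f)$ and need not enlarge $B$. No delicate estimate is needed; the degree comparison in (i) is clean because $f$ is monic and $\sigma$ is injective.
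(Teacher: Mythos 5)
Your proof is correct and follows essentially the same route as the paper: the forward direction of (i) via $fB\subseteq Df\subseteq Rf$ and the identification $\mathrm{Nuc}_r(S_f)=E(f)$, the converse via the same degree comparison, and (ii) by combining (i) with the fact that a non-right-invariant $f$ gives a nonassociative $S_f$ with $\mathrm{Nuc}_l(S_f)=\mathrm{Nuc}_m(S_f)=D$. The paper merely compresses (ii) into a citation of (i) and \cite[Theorem 4]{P15}; your expansion of that step is exactly what is intended.
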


\begin{proof}
(i) If $f \in R$ is $B$-weak semi-invariant, $fB \subseteq
Df \subseteq Rf$ and hence $B\subseteq  \mathrm{Nuc}_r(S_f)$. Conversely, if
$B \subseteq  \mathrm{Nuc}_r(S_f)$ then for all $b \in B$, there exists $q(t) \in R$
such that $f(t)b = q(t)f(t)$. Comparing degrees, we see $q(t) \in D$ and thus $f B \subseteq Df$.
\\ (ii) If $f$ is $B$-weak semi-invariant but not right invariant, the assertion follows from (i) and \cite[Theorem 4]{P15}.
\end{proof}

\begin{proposition} \label{prop:new(2)}
 Let $B$ be a subring of $D$ and $f\in R$ be a right $B$-weak invariant polynomial.
 Then $$B\oplus Bt\oplus\dots\oplus Bt^{m-1}\subset {\rm Nuc}_r(S_f).$$
\end{proposition}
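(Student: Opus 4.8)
The plan is to exhibit the two building blocks $B$ and $t$ as elements of $\mathrm{Nuc}_r(S_f)$ and then exploit the fact, recorded in Section~\ref{subsec:structure}, that the right nucleus is an associative subalgebra of $S_f$. Once $B$ and $t$ both lie in this subalgebra, every monomial $bt^j$ with $b\in B$ and $0\le j\le m-1$ will lie there too, and additive closure will then give the whole module $B\oplus Bt\oplus\dots\oplus Bt^{m-1}$.

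First I would observe that, being right $B$-weak invariant, $f$ is in particular right $B$-weak semi-invariant, so Proposition~\ref{prop:L-weak semi-invariant iff L subset Nuc_r(S_f)}(i) gives $B\subseteq\mathrm{Nuc}_r(S_f)$. Next I would feed in the invariance condition itself: by definition there exist $a,b\in B$ with $f(t)t=(bt+a)f(t)$, and since $bt+a\in R$ this says exactly $ft\in Rf$. Using the description $\mathrm{Nuc}_r(S_f)=E(f)=\{g\in R\,:\,\deg(g)<m,\ fg\in Rf\}$ together with the equivalence ``$ft\in Rf$ iff $t\in\mathrm{Nuc}_r(S_f)$'' recalled in Sections~\ref{subsec:structure} and~\ref{section:Right Semi-Invariant Polynomials}, I conclude $t\in\mathrm{Nuc}_r(S_f)$; in particular the powers of $t$ are associative.

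With $B$ and $t$ both inside the associative subalgebra $\mathrm{Nuc}_r(S_f)$, I would form the $\circ$-products $b\circ t\circ\cdots\circ t$ ($b\in B$, with $j$ factors $t$); associativity makes these unambiguous and keeps them in $\mathrm{Nuc}_r(S_f)$. The one point needing care is to verify that for $j\le m-1$ no reduction modulo $f$ intervenes, so that these products are the genuine skew polynomials $bt^j$: building up, $t^{\circ k}=t^k$ for all $k\le m-1$, since at each step one multiplies an element of degree at most $m-2$ by $t$, leaving the degree at most $m-1<m$; and then $b\circ t^j=bt^j$ because $\deg(bt^j)=j<m$. Hence $bt^j\in\mathrm{Nuc}_r(S_f)$ for every $b\in B$ and every $0\le j\le m-1$, and closure of $\mathrm{Nuc}_r(S_f)$ under addition yields $B\oplus Bt\oplus\dots\oplus Bt^{m-1}\subseteq\mathrm{Nuc}_r(S_f)$, as claimed. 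The argument is thus essentially degree bookkeeping once the two containments $B\subseteq\mathrm{Nuc}_r(S_f)$ and $t\in\mathrm{Nuc}_r(S_f)$ are secured; the only (mild) obstacle is confirming that the $\circ$-powers of $t$ coincide with the ordinary powers $t^j$ up to $j=m-1$, which is precisely what the associativity of the powers of $t$ guarantees.
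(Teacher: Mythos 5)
Your proof is correct and follows essentially the same route as the paper's: obtain $B\subseteq\mathrm{Nuc}_r(S_f)$ from Proposition~\ref{prop:L-weak semi-invariant iff L subset Nuc_r(S_f)}, deduce $t\in\mathrm{Nuc}_r(S_f)$ from $ft=(bt+a)f\in Rf$, and conclude via the fact that $\mathrm{Nuc}_r(S_f)$ is an associative subalgebra. Your explicit degree bookkeeping showing that the $\circ$-products $b\circ t^{\circ j}$ really are the polynomials $bt^j$ for $j\le m-1$ is a detail the paper leaves implicit, but it is the same argument.
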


\begin{proof}
(i) If $f\in R$ is a right $B$-weak invariant polynomial then $B\subset {\rm Nuc}_r(S_f)$
by Proposition \ref{prop:L-weak semi-invariant iff L subset Nuc_r(S_f)}. Since $f(t)t=(bt+a)f(t)$ for
some $a,b\in B$, we have $ft\in Rf$ which implies  $t\in {\rm Nuc}_r(S_f)$, hence the powers of $t$ are associative.
This in turn implies $t^mt=tt^m$ (\cite[Theorem 5]{P15} and \cite{P66}). Now ${\rm Nuc}_r(S_f)$ is an associative
subalgebra of $S_f$, thus $B\oplus Bt\oplus\dots\oplus Bt^{m-1}\subset {\rm Nuc}_r(S_f)$.
\end{proof}

We then obtain results similar to Theorem \ref{thm:right semi invariant conditions} (i), (iii) and (v) for $B$-weak
semi-invariant polynomials:

\begin{proposition} \label{prop:L-weak semi invariant conditions}
Let $f(t) = \sum_{i=0}^{m} a_i t^i \in D[t;\sigma,\delta]$ be monic of degree $m$ and $B$  a subring of $D$.
\\ (i) $f$ is $B$-weak semi-invariant if and only if $f(t)c = \sigma^m(c)f(t)$ for all $c \in B$, if and only if
\begin{equation} \label{eqn:L-weak semi invariant conditions 1}
\sigma^m(c) a_j = \sum_{i=j}^{m} a_i \Delta_{i,j}(c)
\end{equation}
for all $c \in B$, $j \in \{ 0, \ldots, m-1 \}$.
\\ (ii) Suppose $\delta = 0$. Then $f$ is $B$-weak semi-invariant if and only if  $\sigma^m(c) a_j = a_j \sigma^j(c)$
for all $c \in B$, $j \in \{ 0, \ldots, m-1 \}$.
\\(iii) Suppose $\sigma = id$. Then $f$ is $B$-weak semi-invariant if and only if
\begin{equation} \label{eqn:L-weak semi invariant conditions 2}
c a_j = \sum_{i=j}^{m} \binom{i}{j} a_i \delta^{i-j}(c)
\end{equation}
for all $c \in B$, $j \in \{ 0, \ldots, m-1 \}$.
\end{proposition}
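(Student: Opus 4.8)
The plan is to establish (i) directly from the definition $fB \subseteq Df$, and then to obtain (ii) and (iii) as immediate specializations by evaluating the operators $\Delta_{i,j}$ in the cases $\delta = 0$ and $\sigma = \mathrm{id}$. The three conditions are exactly the restrictions to $c \in B$ of those appearing in Theorem \ref{thm:right semi invariant conditions}(i), (iii) and (iv), so the computation runs parallel to that proof, with the quantifier over all of $D$ replaced by one over the subring $B$.

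For (i), I would start from the multiplication rule, which yields $t^i c = \sum_{j=0}^{i} \Delta_{i,j}(c)\, t^j$ for every $c \in D$, and hence
\begin{equation*}
f(t)c = \sum_{i=0}^{m} a_i t^i c = \sum_{j=0}^{m} \Big( \sum_{i=j}^{m} a_i \Delta_{i,j}(c) \Big) t^j .
\end{equation*}
By definition $f$ is $B$-weak semi-invariant if and only if for each $c \in B$ there is some $d \in D$ with $f(t)c = d\, f(t)$. Since $f$ is monic and $\Delta_{m,m} = \sigma^m$, the coefficient of $t^m$ on the left-hand side above is $\sigma^m(c)$, whereas the coefficient of $t^m$ in $d\,f(t)$ is $d$; comparing these forces $d = \sigma^m(c)$. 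This gives the first equivalence, that $f(t)c = \sigma^m(c)\, f(t)$ for all $c \in B$. Comparing the coefficients of $t^j$ on the two sides of this identity then produces $\sum_{i=j}^{m} a_i \Delta_{i,j}(c) = \sigma^m(c) a_j$ for $j \in \{0, \ldots, m-1\}$, which is precisely \eqref{eqn:L-weak semi invariant conditions 1}; the equation for $j = m$ holds automatically since $a_m = 1$.

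For (ii) and (iii) I would simply specialize the operators $\Delta_{i,j}$ in \eqref{eqn:L-weak semi invariant conditions 1}. When $\delta = 0$ one has $\Delta_{i,j} = 0$ for $i \neq j$ and $\Delta_{j,j} = \sigma^j$, so the inner sum collapses to the single term $a_j \sigma^j(c)$, giving $\sigma^m(c) a_j = a_j \sigma^j(c)$. When $\sigma = \mathrm{id}$, each monomial of degree $i-j$ in $\delta$ reduces to $\delta^{i-j}$ and there are $\binom{i}{j}$ of them, so $\Delta_{i,j} = \binom{i}{j} \delta^{i-j}$; combined with $\sigma^m(c) = c$, this rewrites \eqref{eqn:L-weak semi invariant conditions 1} as \eqref{eqn:L-weak semi invariant conditions 2}.

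I do not expect any serious obstacle: the argument is the same leading-coefficient comparison already used for right semi-invariant polynomials, and the only points needing care are the bookkeeping for $\Delta_{i,j}$ --- in particular the identity $\Delta_{m,m} = \sigma^m$, which is what pins down $d = \sigma^m(c)$ and makes the degree comparison work, together with the correct evaluation of $\Delta_{i,j}$ in the two degenerate cases.
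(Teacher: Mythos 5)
Your proposal is correct and follows essentially the same route as the paper: expand $f(t)c$ via $t^ic=\sum_{j\le i}\Delta_{i,j}(c)t^j$, pin down $d=\sigma^m(c)$ by comparing leading coefficients in $f(t)c=d\,f(t)$, then compare the $t^j$ coefficients, and finally specialize $\Delta_{i,j}$ to $\sigma^j$ (for $\delta=0$) or $\binom{i}{j}\delta^{i-j}$ (for $\sigma=\mathrm{id}$). The only cosmetic difference is that the paper cites Jacobson's formula $t^ic=\sum_j\binom{i}{j}\delta^{i-j}(c)t^j$ for part (iii) rather than re-deriving it from the monomial description of $\Delta_{i,j}$, which amounts to the same thing.
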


\begin{proof}
(i) We have
\begin{equation} \label{eqn:L-weak semi invariant conditions 3}
f(t)c = \sum_{i=0}^{m} a_i t^i c = \sum_{i=0}^{m} a_i \sum_{j=0}^{i}
\Delta_{i,j}(c) t^j = \sum_{j=0}^{m} \sum_{i=j}^{m} a_i
\Delta_{i,j}(c) t^j
\end{equation}
for all $c \in B$, hence the $t^m$ coefficient of $f(t)c$ is
$\Delta_{m,m}(c) = \sigma^m(c)$, and so $f$ is $B$-weak semi-invariant if and only if $f(t)c = \sigma^m(c)f(t)$ for all $c
\in B$. Comparing the $t^j$ coefficient of \eqref{eqn:L-weak semi
invariant conditions 3} and $\sigma^m(c)f(t)$ for all $j \in \{ 0, \ldots, m-1 \}$ yields \eqref{eqn:L-weak semi invariant conditions
1}.
\\ (ii) When $\delta = 0$, $\Delta_{i,j} = 0$ unless $i = j$ in which case $\Delta_{j,j} = \sigma^j$.
Therefore \eqref{eqn:L-weak semi invariant conditions 1} simplifies
to $\sigma^m(c) a_j = a_j \sigma^j(c)$ for all $c \in B$, $j \in \{0, \ldots, m-1 \}$.
\\ (iii) When $\sigma = id$ we have
$$t^i c = \sum_{j=0}^{i} \binom{i}{j} \delta^{i-j}(c)$$
for all $c\in D$ by \cite[(1.1.26)]{J96} and thus
\begin{equation} \label{eqn:L-weak semi invariant conditions 4}
f(t) c = \sum_{i=0}^{m} a_i t^i c = \sum_{i=0}^{m} a_i \sum_{j=0}^{i}
\binom{i}{j} \delta^{i-j}(c) t^j = \sum_{j=0}^{m} \sum_{i=j}^{m}\binom{i}{j} a_i \delta^{i-j}(c) t^j
\end{equation}
for all $c \in B$. Furthermore $f$ is $B$-weak semi-invariant is equivalent to $f(t) c = c f(t)$ for all $c \in B$ by (i). Comparing
the $t^j$ coefficient of \eqref{eqn:L-weak semi invariant conditions 4} and $c f(t) = \sum_{i=0}^{m} c a_i t^i$ for all
$c \in B$, $j \in \{ 0, \ldots, m-1 \}$ yields \eqref{eqn:L-weak semi invariant conditions 2}.
\end{proof}

%
%

\section{(Right) division algebras obtained from Petit algebras} \label{section:When is S_f a Division Algebra?}

Petit algebras can be used to find classes of algebras that are right but not left division algebras.

 Let $D$ be a division algebra with center $F$ and $R=D[t;\sigma,\delta]$.
We say $f \in R$ is \emph{bounded} if there exists $0 \neq f^* \in
R$ such that $Rf^* = f^* R$ is the largest two-sided ideal of $R$
contained in $Rf$. The element $f^*$ is determined by $f$ up to
multiplication on the left by elements of $D^{\times}$.
The following result is stated but not proved in \cite[p.~13-07]{P66} and well known. We give a proof here for lack of a proper reference:

\begin{proposition} \label{prop:f irreducible implies E(f) division}
If $f \in R$ is irreducible then $E(f)$ is a division ring.
\end{proposition}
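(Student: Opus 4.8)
The plan is to show that the eigenring $E(f) = \mathrm{Nuc}_r(S_f)$ is a division ring by proving that every nonzero element has a two-sided inverse. Since $E(f)$ is an associative ring (it is the right nucleus, hence an associative subalgebra of $S_f$), and since $S_f$ contains no zero divisors when $f$ is irreducible, it suffices to show that every nonzero $u \in E(f)$ has a multiplicative inverse in $E(f)$. I would first recall, from the general observations preceding this statement, the key fact that if $\mathrm{Nuc}_r(S_f)$ contains zero divisors then $f$ is reducible \cite{P66}. By the contrapositive, the irreducibility of $f$ immediately guarantees that $E(f)$ has no zero divisors. Thus the associative ring $E(f)$ is an integral domain sitting inside the finite-rank left $D$-module $S_f$.

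Next I would use a dimension/finiteness argument to upgrade ``no zero divisors'' to ``division ring.'' The algebra $S_f$ is a free left $D$-module of rank $m$, so it is finite-dimensional over the center $F$ of $D$ (when $D$ is finite-dimensional over $F$), or at least finitely generated as a left $D$-module in general. Fix a nonzero $u \in E(f)$. Because $E(f)$ has no zero divisors, the left multiplication map $L_u^{E} : E(f) \to E(f)$, $x \mapsto ux$, is injective. The crucial point is to argue that this injective $F$-linear (or appropriately linear) endomorphism of a finite-dimensional space is also surjective, which then yields some $v \in E(f)$ with $uv = 1$; a symmetric argument using right multiplication by $u$ (also injective since $E(f)$ is a domain) gives a left inverse, and in an associative ring a two-sided inverse follows.

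The main obstacle is ensuring the finite-dimensionality needed to run the ``injective implies surjective'' step, since the statement is phrased for a general division algebra $D$ with center $F$ without any finiteness hypothesis. The clean way around this is to work relative to $D$: $E(f) \subseteq S_f$, and $S_f$ is a free left $D$-module of rank $m$, hence a finite-dimensional right $\mathrm{Nuc}_r(S_f)$-module is not quite what I want, but the identity $I(f)/Rf = E(f)$ lets me view $E(f)$ as the endomorphism-compatible piece. The right way is to recall (as used in the idealizer discussion) that $E(f) \cong \mathrm{End}_R(R/Rf)^{\mathrm{op}}$ when $R/Rf$ is a simple left $R$-module — which is exactly what irreducibility of $f$ provides — so that $E(f)$ is the endomorphism ring of a simple module, and by Schur's Lemma every nonzero endomorphism is invertible, making $E(f)$ a division ring. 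I would structure the proof to invoke this Schur-type argument directly: irreducibility of $f$ makes $R/Rf$ a simple $R$-module, the eigenring is precisely $\mathrm{End}_R(R/Rf)$ acting on the right, and Schur's Lemma closes the argument without any finite-dimensionality assumption. The delicate verification to get right is the identification of $E(f)$ with this endomorphism ring and checking that the multiplication conventions match, so that ``nonzero'' genuinely corresponds to ``nonzero $R$-module endomorphism.''
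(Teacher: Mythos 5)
Your final argument is exactly the paper's proof: irreducibility of $f$ makes $R/Rf$ a simple left $R$-module, $E(f)$ is identified with $\mathrm{End}_R(R/Rf)$, and Schur's Lemma finishes. The dimension-counting detour in your first two paragraphs is correctly abandoned (it would need finiteness hypotheses the statement does not assume), and the Schur-type argument you settle on is the right one and matches the paper.
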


\begin{proof}
Let $\mathrm{End}_R(R/Rf)$ denote the endomorphism ring of the left
$R$-module $R/Rf$, that is $\mathrm{End}_R(R/Rf)$ consists of all
maps $\phi: R/Rf \rightarrow R/Rf$ such that $\phi(rh + r'h') = r
\phi(h) + r' \phi(h')$ for all $r,r' \in R$, $h, h' \in R/Rf$.

Now $f$ irreducible implies $R/Rf$ is a simple left $R$-module
\cite[p.~15]{G}, therefore $\mathrm{End}_R(R/Rf)$ is an associative
division ring by Schur's Lemma. Finally $E(f)$ is
isomorphic to the ring $\mathrm{End}_R(R/Rf)$ \cite[p.~18-19]{G}.
\end{proof}

\begin{remark}
If $\sigma$ is an automorphism and $f$ is bounded, then $f$ is
irreducible if and only if $E(f)={\rm Nuc}_r(S_f)$  is an associative
division algebra \cite[Proposition 4]{G}  which sums up classical
results from \cite{J43}. The condition that $f$ is bounded is necessary here, as is shown in \cite[Example 3]{G} where
$f\in\mathbb{Q}(x)[t;d/dt]$ is reducible in the differential operator ring $\mathbb{Q}(x)[t;d/dt]$, but ${\rm Nuc}_r(S_f)$ is a division algebra. For instance, if $D$ is a finite field and
$\delta = 0$, all polynomials are bounded and hence $f$ is
irreducible if and only if $E(f)$ is a finite field \cite[Theorem
3.3]{G0}.
\end{remark}

The argument leading up to \cite[Section 2., (6)]{P66} implies that
$S_f$ has no zero divisors if and only if $f$ is irreducible, which is in turn equivalent to
$S_f$ being a right division ring (i.e., right multiplication $R_a$ in $S_f$ is bijective for all $0\not=a\in S_f$).
We include a proof for the convenience of the reader, since no full
proof is given in \cite{P66}:

\begin{theorem} \label{thm:f(t) irreducible iff S_f right division}  \label{lem:L_a injective but not nec surjective}
(\cite[(6)]{P66}). Let $f \in R$ have degree $m$ and $0 \neq a \in S_f$.
\\ (i) $R_a$ is bijective is equivalent to $1$ being a
greatest common right divisor of $f$ and $a$ (i.e., $Da(t)+Df(t)=D$).
\\ (ii) $S_f$ is a right division
algebra if and only if $f$ is irreducible, if and only if $S_f$ has no zero divisors.
\\ (iii) If $f$ is irreducible then $L_a$ is injective for all $0\not=a\in S_f$.
\end{theorem}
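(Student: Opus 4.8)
The plan is to prove (i) first and then derive (ii) and (iii) from it; to avoid any circularity I would establish (iii) directly from (i), and only afterwards assemble (ii) out of (i) and (iii). For (i), recall that $R_a$ is a left $D$-linear endomorphism of $S_f$ and that $S_f$ is a free left $D$-module of rank $m$; hence $R_a$ is bijective if and only if it is injective, and $\ker R_a = \{x \in R_m : xa \in Rf\}$. Since $D$ is a division ring and $\sigma$ is injective, $R$ admits right division by every nonzero element, so it is a principal left ideal domain in which the greatest common right divisor $\mathrm{gcrd}(f,a)$ and least common left multiple $\mathrm{lclm}(f,a)$ exist and satisfy $\deg \mathrm{lclm}(f,a) = \deg f + \deg a - \deg \mathrm{gcrd}(f,a)$. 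Writing $d = \mathrm{gcrd}(f,a)$ and $\ell = \mathrm{lclm}(f,a) = ua$ (possible because $\ell \in Ra$), one gets $\deg u = m - \deg d$. The heart of (i) is the identity $\{x \in R : xa \in Rf\} = Ru$: the inclusion $\supseteq$ holds since $ua = \ell \in Rf$, and $\subseteq$ follows because $xa \in Rf \cap Ra = R\ell = Rua$ permits right-cancellation of $a$ (valid as $R$ has no zero divisors). As every nonzero element of $Ru$ has degree at least $\deg u = m - \deg d$, the kernel meets $R_m \setminus \{0\}$ exactly when $\deg d \geq 1$; thus $R_a$ is injective if and only if $\mathrm{gcrd}(f,a) = 1$, proving (i).

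For (iii), the key observation is $L_a(x) = a \circ x = R_x(a)$. Fix $0 \neq a \in S_f$ and suppose $L_a(x) = 0$ with $x \neq 0$. Since $f$ is irreducible and $\deg x < m$, any right divisor of $f$ that also divides $x$ has degree $< m$ and is therefore a unit, so $\mathrm{gcrd}(f,x) = 1$; by (i), $R_x$ is bijective. But then $R_x(a) = a \circ x = L_a(x) = 0$ forces $a = 0$, a contradiction. Hence $x = 0$, and $L_a$ is injective.

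For (ii), $S_f$ is a right division algebra precisely when $R_a$ is bijective for all $0 \neq a \in S_f$, equivalently (by (i)) when $\mathrm{gcrd}(f,a) = 1$ for all such $a$. If $f = gh$ is a proper factorization then $h \in S_f$ is a common right divisor of $f$ and $h$, so $\mathrm{gcrd}(f,h) \neq 1$ and $S_f$ is not a right division algebra; conversely, if $f$ is irreducible then, as in the previous paragraph, $\mathrm{gcrd}(f,a) = 1$ for every nonzero $a$ of degree $< m$. This yields the equivalence of right division and irreducibility. For the remaining equivalence, reducibility of $f$ already forces zero divisors in $S_f$ (recorded earlier in the excerpt), while if $f$ is irreducible then (i) and (iii) make every $R_a$ and $L_a$ injective, which is exactly the stated criterion for $S_f$ to have no zero divisors.

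The main obstacle lies entirely in (i): making the $\mathrm{gcrd}$/$\mathrm{lclm}$ theory and the degree formula rigorous under the standing hypothesis that $\sigma$ is merely injective rather than an automorphism. I would need to verify that right division, and hence the Euclidean-type algorithm producing greatest common right divisors, least common left multiples, and the additivity $\deg \mathrm{lclm}(f,a) = \deg f + \deg a - \deg \mathrm{gcrd}(f,a)$, really goes through in $D[t;\sigma,\delta]$ assuming only injectivity of $\sigma$, together with the right-cancellation step. Once this machinery is in place, parts (ii) and (iii) are essentially formal consequences.
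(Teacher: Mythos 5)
Your proposal is correct and follows essentially the same route as the paper: part (i) via the left $D$-linearity of $R_a$ on the free module $S_f$, the identification of $\ker R_a$ with $Ru \cap R_m$ where $ua = \mathrm{lclm}(f,a)$, and the degree formula $\deg\mathrm{lclm}(f,a)+\deg\mathrm{gcrd}(f,a)=\deg f+\deg a$; parts (ii) and (iii) then follow exactly as in the paper, with (iii) resting on the same observation $L_a(x)=R_x(a)$. The technical point you flag at the end is handled in the paper by citing \cite[Proposition 1.3.1]{J96} and the fact that $R$ is a left principal ideal (indeed left Euclidean) domain, which holds since $D$ is a division ring and any endomorphism of a division ring is injective, so right division by nonzero polynomials is always available.
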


\begin{proof}
(i) Let $0 \neq a \in S_f$. Since $S_f$ is a free left $D$-module of
finite rank $m$ and $R_a$ is left $D$-linear, $R_a$ is bijective if
and only if it is injective \cite[Chapter IV, Corollary
2.14]{hungerford1980algebra}, which is equivalent to
$\mathrm{ker}(R_a) = \{ 0 \}$. Now $R_a(z) = z  a = 0$ is
equivalent to $za \in Rf$, which means we can write
$$\mathrm{ker}(R_a) = \{ z \in R_m \ \vert \ za \in Rf \}.$$
Furthermore, $R$ is a left principal ideal domain, which implies $za
\in Rf$ if and only if $za \in Ra \cap Rf = Rg = Rha,$ where $g = ha$
is the least common left multiple of $a$ and $f$. Therefore $za \in
Rf$ is equivalent to $z \in Rh$, and hence $\mathrm{ker}(R_a) \neq \{
0 \}$, if and only if there exists a polynomial of degree strictly
less than $m$ in $Rh$, which is equivalent to $\mathrm{deg}(h) \leq
m-1$.

Let $b \in R$ be a right greatest common divisor of $a$ and $f$. Then
$\mathrm{deg}(f) + \mathrm{deg}(a) = \mathrm{deg}(g) +\mathrm{deg}(b) = \mathrm{deg}(ha) + \mathrm{deg}(b)$ by
\cite[Proposition 1.3.1]{J96}, and so $\mathrm{deg}(b) =\mathrm{deg}(f) - \mathrm{deg}(h).$ Thus $\mathrm{deg}(h) \leq m-1$
if and only if $\mathrm{deg}(b) \geq 1$. We conclude
$\mathrm{ker}(R_a) = \{ 0 \}$ if and only if $\mathrm{deg}(b) = 0$, if and only if $1$ is a right greatest common divisor of $f(t)$ and
$a$. In particular, $S_f$ is a right division algebra if
and only if $R_a$ is bijective for all $0 \neq a \in S_f$, if and
only if $1$ is a right greatest common divisor of $f(t)$ and $a$ for all $0 \neq a \in S_f$, if and only if $f(t)$ is
irreducible.
\\ (ii) If $f$ is irreducible then $L_a$ and $R_a$ are
injective for all $0\not=a\in S_f$ (i), therefore $S_f$ has no zero divisors.
The converse of the last equivalence statement is trivial.
\\ (iii) If $R_h$ is bijective this automatically implies that $L_h$ is injective, for all $0\not=h\in S_f$.
\end{proof}

\begin{lemma} \label{prop:S_f associative division iff irreducible}
 If $f \in R$ is right invariant, then  $S_f$ is associative and a division algebra if and only if $f$ is irreducible.
\end{lemma}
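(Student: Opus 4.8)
The plan is to handle the two implications separately and to isolate the exact role played by the right-invariance hypothesis, which is simply to guarantee associativity. Once associativity is available, both directions reduce to results already established earlier in the paper.

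For the forward implication, suppose $S_f$ is an associative division algebra. By definition $L_a$ and $R_a$ are bijective, hence injective, for every $0\neq a\in S_f$; by the criterion recalled in Section~\ref{subsec:2} this means $S_f$ has no zero divisors. Theorem~\ref{thm:f(t) irreducible iff S_f right division}(ii) then yields that $f$ is irreducible. Note that this direction needs neither the associativity nor the right-invariance hypothesis.

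For the converse, assume $f$ is irreducible. Since $f$ is right invariant, $Rf$ is a two-sided ideal of $R$, so by the structure theory of Section~\ref{subsec:structure} the algebra $S_f=R/Rf$ is associative. Associativity forces every element into the right nucleus, i.e.\ $\mathrm{Nuc}_r(S_f)=S_f$; combined with the identification $\mathrm{Nuc}_r(S_f)=E(f)$ this gives $E(f)=S_f$. Because $f$ is irreducible, Proposition~\ref{prop:f irreducible implies E(f) division} tells us $E(f)$ is a division ring, and therefore $S_f=E(f)$ is a division ring, in particular a division algebra over $S_0$.

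The substantive point — and the reason right invariance cannot be dropped — is the passage from a one-sided to a two-sided division structure. Without the associativity supplied by right invariance, Theorem~\ref{thm:f(t) irreducible iff S_f right division} only gives that $S_f$ is a \emph{right} division algebra. A direct way to see the upgrade, avoiding Proposition~\ref{prop:f irreducible implies E(f) division}, is to note that surjectivity of each $R_a$ produces for every $0\neq a$ an element $x$ with $xa=1$; in an associative unital ring a universal left inverse is automatically a two-sided inverse (if $xa=1$ and $yx=1$ then $a=(yx)a=y(xa)=y$, so $ax=yx=1$), and this is precisely where associativity enters. I expect this to be the only place requiring care; everything else is a direct appeal to facts already in hand.
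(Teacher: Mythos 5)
Your proof is correct, and its main line of argument for the converse is genuinely different from the paper's. The paper argues: right invariance gives associativity, irreducibility gives that every $R_b$ is bijective via Theorem~\ref{thm:f(t) irreducible iff S_f right division}, and then the bijectivity of every $L_b$ follows from Bruck's Lemma~1B for associative systems. You instead observe that associativity forces $\mathrm{Nuc}_r(S_f)=S_f$, identify this with the eigenring $E(f)$, and invoke Proposition~\ref{prop:f irreducible implies E(f) division} (Schur's lemma applied to the simple module $R/Rf$) to conclude that $S_f=E(f)$ is an associative division ring. Both routes are valid; yours has the appeal of reusing a result already proved in the paper rather than importing an external lemma, while the paper's route stays entirely inside the ``one-sided division algebra'' framework of Theorem~\ref{thm:f(t) irreducible iff S_f right division} and so parallels the proofs of the later, nonassociative division-algebra theorems. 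Your closing ``direct way'' --- every nonzero element has a left inverse, and in an associative unital ring a universal left inverse is two-sided --- is in substance exactly the content of the Bruck lemma the paper cites, so that aside recovers the paper's argument in elementary form. The forward direction is handled equivalently in both (a division algebra has no zero divisors, and reducible $f$ produces zero divisors), and you are right that it uses neither hypothesis.
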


\begin{proof}
Suppose $f$ is right invariant. Then $S_f$ is associative by \cite[Theorem 4]{P15}. If $f$ is reducible then $S_f$ is
trivially not a division algebra. Conversely, if $f$ is irreducible the maps $R_b$ are bijective for all $0 \neq b \in S_f$
by Theorem \ref{thm:f(t) irreducible iff S_f right division}. This implies the maps $L_b$ are also bijective for all $0 \neq b \in S_f$
by \cite[Lemma 1B]{B46}, and so $S_f$ is a division algebra.
\end{proof}

This implies a generalization of Theorem \cite[Theorem 4]{P15}:

\begin{theorem} \label{thm:L_t surjective iff sigma surjective}
Let $f(t) =  \sum_{i=0}^{m} a_i t^i \in D[t;\sigma]$ be monic and
$a_0 \neq 0$. Then for every $j \in \{ 1, \ldots, m-1 \}$, $L_{t^j}$
is surjective if and only if $\sigma$ is surjective. In particular,
if $\sigma$ is not surjective then $S_f$ is not a left division
algebra.
\end{theorem}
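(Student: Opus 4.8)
The plan is to reduce the claim, for every fixed $j\in\{1,\dots,m-1\}$, to the already quoted case $j=1$ of \cite[Theorem 4]{P15}, and for this the key step is to establish the operator identity
$$L_{t^j}=(L_t)^j$$
on $S_f$. I expect this identity to be the only real content of the theorem, and also the step that needs the most care: since $S_f$ is in general nonassociative, it is not a priori clear that $j$-fold left multiplication by $t$ agrees with left multiplication by $t^j$ (already one must verify $t\circ(t\circ g)=t^2\circ g$, which is not automatic).

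To prove the identity I would identify $L_t$ with a genuine, associative module action. Recall that, as a left $D$-module, $S_f$ consists of the canonical representatives of degree ${<}\,m$ of the left $R$-module $M=R/Rf$, and that for $g\in S_f$ one has $t\circ g=tg\,\,{\rm mod}_r f$, which is exactly the canonical representative of $t\cdot(g+Rf)=tg+Rf$. Hence $L_t$ is nothing but the action of the ring element $t\in R$ on $M$. Since module actions are associative, iterating the action of $t$ yields the action of $t^j$; tracking representatives, $(L_t)^j(g)$ is the canonical representative of $t^j\cdot(g+Rf)=t^jg+Rf$, that is $t^jg\,\,{\rm mod}_r f=t^j\circ g=L_{t^j}(g)$, where $t^j\in S_f$ because $j\le m-1$. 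This gives $L_{t^j}=(L_t)^j$ and, in particular, resolves the nonassociativity worry.

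With the identity in hand both implications are immediate. If $\sigma$ is surjective then, being an endomorphism of the division ring $D$, it is automatically injective and hence bijective; \cite[Theorem 4]{P15} then yields that $L_t$ is surjective, so its $j$-fold composite $(L_t)^j=L_{t^j}$ is surjective. Conversely, if $L_{t^j}=(L_t)^j$ is surjective then, since ${\rm Im}\big((L_t)^j\big)\subseteq{\rm Im}(L_t)$, the map $L_t$ is itself surjective, whence $\sigma$ is surjective by \cite[Theorem 4]{P15}. This establishes the equivalence for each $j\in\{1,\dots,m-1\}$.

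For the final assertion I would take $j=1$, which is permitted because $m\ge2$: if $\sigma$ is not surjective then $L_t$ is not surjective, hence not bijective. As $t\neq0$ in $S_f$, the defining requirement of a left division algebra, namely that $L_a$ be bijective for every nonzero $a$, fails, so $S_f$ is not a left division algebra.
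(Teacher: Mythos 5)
Your proof is correct, and its core strategy coincides with the paper's: reduce every $j$ to the case $j=1$ by establishing $L_{t^j}=L_t^{\,j}$ on $S_f$. Your justification of that identity --- viewing $L_t$ as the action of the ring element $t$ on the left $R$-module $R/Rf$, so that iterating it gives the action of $t^j$ --- is a clean conceptual repackaging of exactly what the paper does by induction (the paper writes $t^jb=qf+b'$ and checks $t\circ b'=t^{j+1}b\ {\rm mod}_r f$, which is the same computation with representatives made explicit). The one genuine divergence is the base case: you import both directions of the $j=1$ equivalence from \cite[Theorem 4]{P15} (correctly noting that an endomorphism of a division ring is injective, so surjective implies bijective, and that ${\rm Im}(L_t^{\,j})\subseteq{\rm Im}(L_t)$ pushes surjectivity down to $j=1$), whereas the paper reproves $j=1$ from scratch, computing $L_t(z)$ coefficientwise, reading off $\sigma(z_{m-1})a_0$ as the constant term to get one direction, and exhibiting an explicit preimage $z_{m-1}=\sigma^{-1}(g_0a_0^{-1})$, $z_{i-1}=\sigma^{-1}(g_i)-z_{m-1}\sigma^{-1}(a_i)$ for the other. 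Your route is shorter and perfectly legitimate since the cited result is external; the paper's route buys a self-contained argument and an explicit inversion formula for $L_t$, which is where the hypothesis $a_0\neq 0$ visibly enters. Your handling of the final assertion (take $j=1$, note $t\neq 0$ and $L_t$ not bijective) matches the intended reading.
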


\begin{proof}
We first prove the result for $j = 1$: Given $z = \sum_{i=0}^{m-1}
z_i t^i \in S_f$, we have
\begin{equation} \label{eqn:L_t surjective iff sigma surjective 1}
\begin{split}
L_t(z) &= t \circ z = \sum_{i=0}^{m-2} \sigma(z_{i})t^{i+1} +
\sigma(z_{m-1})t \circ t^{m-1} \\ &= \sum_{i=1}^{m-1}
\sigma(z_{i-1})t^i + \sigma(z_{m-1}) \sum_{i=0}^{m-1} a_i t^i.
\end{split}
\end{equation}
$\Rightarrow$: Suppose $L_t$ is surjective, then given any $b \in D$ there exists $z \in S_f$ such that $t \circ z = b$.
The $t^0$-coefficient of $L_t(z)$ is $\sigma(z_{m-1}) a_0$ by \eqref{eqn:L_t surjective iff sigma surjective 1}, and thus
for all $b \in D$ there exists $z_{m-1} \in D$ such that $\sigma(z_{m-1}) a_0 = b$. Therefore $\sigma$ is surjective.
\\ $\Leftarrow$: Suppose $\sigma$ is surjective and let $g = \sum_{i=0}^{m-1} g_i t^i \in S_f$. Define
$$z_{m-1} = \sigma^{-1}(g_0 a_0^{-1} ), \ z_{i-1} = \sigma^{-1}(g_i) - z_{m-1} \sigma^{-1}(a_i)$$ for all
 $i \in \{ 1, \ldots , m-1 \}$. Then
\begin{equation*}
\begin{split}
L_t(z) &= \sigma(z_{m-1}) a_0 + \sum_{i=1}^{m-1} \big(
\sigma(z_{i-1}) + \sigma(z_{m-1}) a_i \big) t^i = \sum_{i=0}^{m-1} g_i
t^i = g,
\end{split}
\end{equation*}
by \eqref{eqn:L_t surjective iff sigma surjective 1}, which implies
$L_t$ is surjective.

Hence $L_t$ surjective is equivalent to $\sigma$ surjective. To prove
the result for all $j \in \{ 1, \ldots, m-1 \}$ we show that
\begin{equation} \label{eqn:L_t surjective iff sigma surjective 2}
L_{t^j} = L_t^j,
\end{equation}
for all $j \in \{ 1, \ldots, m-1 \}$, then it follows $\sigma$ is
surjective if and only if $L_t$ is surjective if and only if $L_t^j =
L_{t^j}$ is surjective. In the special case when $D = \mathbb{F}_q$
is a finite field, $\sigma$ is an automorphism and $f$ is monic
and irreducible, the equality \eqref{eqn:L_t surjective iff sigma
surjective 2} is proven in \cite[p.~12]{LS}. A similar proof also
works in our context: suppose inductively that
$L_{t^j} = L_t^j$ for some $j \in \{ 1, \ldots, m-2 \}$. Then
$L_t^j(b) = t^j b \ \mathrm{mod}_r f$ for all $b \in R_m$. Let
$L_t^j(b) = b'$ so that $t^j b = qf + b'$ for some $q \in R$. We have
\begin{align*}
L_t^{j+1}(b) &= L_t(L_t^j(b)) = L_t(b') = L_t(t^j b - q f) = t \circ
(t^j b - q f) \\ &= (t^{j+1} b - tqf) \ \mathrm{mod}_r f = t^{j+1}b \
\mathrm{mod}_r f = L_{t^{j+1}}(b),
\end{align*}
hence \eqref{eqn:L_t surjective iff sigma surjective 2} follows by
induction.
\end{proof}

Recall that for $\delta=0$, $L_t$ is a \emph{pseudo-linear transformation}, i.e. $L_t(ah(t))=\sigma(a)L_t(h(t))$ for all $a\in
S$, $h(t)\in S_f$,  and that $L_h=h(t)(L_t)=\sum_{i=0}^{m-1}a_iL_t^i$
for $h(t)=\sum_{i=0}^{m-1}a_it^i$.
If $f$ is irreducible, then $L_t$ is irreducible, that means the only $L_t$-invariant subspaces
of the left $D$-module $D^m$ are $\{0\}$ and $D^m$, as pointed out in  \cite{LS}.

\begin{corollary} \label{cor:S_f right but not left division algebra}
Suppose $\sigma$ is not surjective and $f \in D[t;\sigma]$ is
irreducible. Then $S_f$  has no zero divisors and is a right division algebra but not a left
division algebra. In particular, $S_f$ is an infinite-dimensional  $S_0$-algebra.
\end{corollary}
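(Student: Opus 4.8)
The plan is to read the first two assertions directly off the two theorems just proved, and then to deduce infinite-dimensionality by a short dimension-count argument.

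First I would settle the ``no zero divisors / right division'' claim: since $f$ is irreducible, Theorem~\ref{thm:f(t) irreducible iff S_f right division}(ii) immediately yields that $S_f$ has no zero divisors and is a right division algebra. For the ``not a left division algebra'' claim I would invoke Theorem~\ref{thm:L_t surjective iff sigma surjective}, but first I must bring $f$ into the hypotheses of that theorem. As $D$ is a division ring, the leading coefficient $a_m$ of $f$ is a unit, so replacing $f$ by the monic polynomial $a_m^{-1}f$ changes nothing (recall $S_f=S_{af}$ for $a\in D^\times$) and preserves irreducibility; I may thus assume $f$ is monic. Moreover $a_0\neq 0$, since otherwise $f=(a_1+a_2t+\dots+a_mt^{m-1})t$ would be a proper factorisation (as $m\geq 2$), contradicting irreducibility. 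With $f$ monic and $a_0\neq 0$, Theorem~\ref{thm:L_t surjective iff sigma surjective} applies, and as $\sigma$ is not surjective it gives that $S_f$ is not a left division algebra.

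For the final ``in particular'' I would argue by contradiction, using the fact recorded in Section~\ref{subsec:2} that a finite-dimensional algebra over a field with no zero divisors is automatically a division algebra. The point needing verification is that $S_0$ is genuinely a field: with $\delta=0$ we have $S_0=C(D)\cap{\rm Fix}(\sigma)$, and any nonzero $a\in S_0$ is invertible in the field $C(D)$ with $\sigma(a^{-1})=\sigma(a)^{-1}=a^{-1}$, so that $a^{-1}\in S_0$ as well; hence $S_0$ is a field. Now if $S_f$ were finite-dimensional over $S_0$, then, since $S_f$ has no zero divisors, the cited fact would force $S_f$ to be a division algebra, contradicting the second assertion. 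Therefore $S_f$ is infinite-dimensional over $S_0$.

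I expect no serious obstacle, since the corollary essentially packages the two preceding theorems; the only genuine step is checking that $S_0$ is a field so that the finite-dimensional criterion applies. Alternatively, the infinite-dimensionality can be seen directly: $\sigma$ is an injective $S_0$-linear endomorphism of $D$ that fixes $S_0$ but fails to be surjective, which is impossible when $\dim_{S_0}D<\infty$; since $D\subseteq S_f$ as an $S_0$-subspace, $S_f$ is infinite-dimensional over $S_0$ as well.
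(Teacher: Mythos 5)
Your proposal is correct and follows exactly the route the paper intends (the corollary is stated without proof as an immediate consequence of Theorems \ref{thm:f(t) irreducible iff S_f right division} and \ref{thm:L_t surjective iff sigma surjective}, combined with the fact that a finite-dimensional algebra over a field without zero divisors is a division algebra). You also correctly supply the details the paper leaves implicit: normalising $f$ to be monic, observing that irreducibility forces $a_0\neq 0$ so the hypotheses of Theorem \ref{thm:L_t surjective iff sigma surjective} are met, and verifying that $S_0=F\cap{\rm Fix}(\sigma)$ is a field so the finite-dimensionality criterion applies.
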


The following result was stated but not proved by Petit
\cite[(7)]{P66}:
\begin{theorem} \label{thm:S_f_division_iff_irreducible}
 Let $f \in D[t; \sigma, \delta]$ be such that
$S_f$ is either a finite-dimensional $S_0$-vector space or a right
$\mathrm{Nuc}_r(S_f)$-module which is free of finite rank. Then $S_f$
is a division algebra if and only if $f$ is irreducible.
\end{theorem}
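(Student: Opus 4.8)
The plan is to prove the two directions separately, treating the ``only if'' part as the easy one and concentrating the work on the ``if'' part. For the converse, note that if $f$ is reducible then $S_f$ contains zero divisors, as recorded in Section~\ref{subsec:structure}; a division algebra has no zero divisors, so $S_f$ being a division algebra forces $f$ to be irreducible. (Equivalently, this is the contrapositive of part~(ii) of Theorem~\ref{thm:f(t) irreducible iff S_f right division}.) This direction requires neither finiteness hypothesis.

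For the forward direction, suppose $f$ is irreducible. By Theorem~\ref{thm:f(t) irreducible iff S_f right division}, $S_f$ is already a right division algebra, so $R_a$ is bijective for every $0\neq a\in S_f$; moreover part~(iii) of that theorem gives that $L_a$ is injective for every such $a$. Since being a division algebra means both $L_a$ and $R_a$ are bijective, the entire problem reduces to the single claim that each injective $L_a$ is in fact surjective. I would therefore isolate this surjectivity claim and then verify it separately in each of the two hypothesized settings, exploiting that in both cases $L_a$ is a linear endomorphism for the relevant finite structure.

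In the first case $S_f$ is a finite-dimensional $S_0$-vector space, and by the remark in Section~\ref{subsec:structure} the map $L_a$ is an $S_0$-module endomorphism; an injective endomorphism of a finite-dimensional vector space is bijective, so $L_a$ is bijective and $S_f$ is a division algebra. In the second case $S_f$ is a right $\mathrm{Nuc}_r(S_f)$-module that is free of finite rank, and $L_a$ is $\mathrm{Nuc}_r(S_f)$-linear by the closing remark of Section~\ref{subsec:2}. The key extra input here is Proposition~\ref{prop:f irreducible implies E(f) division}: since $f$ is irreducible, $\mathrm{Nuc}_r(S_f)=E(f)$ is an associative division ring. Thus $S_f$ is a finite-rank free module over a division ring, and the usual rank--nullity count over a skew field shows that an injective linear endomorphism is surjective; hence $L_a$ is again bijective.

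The main obstacle I anticipate is the second case, specifically the justification that injectivity forces surjectivity when the scalars form a possibly noncommutative division ring rather than a field. This rests on two points: first, that $\mathrm{Nuc}_r(S_f)$ really is a division ring, which is exactly Proposition~\ref{prop:f irreducible implies E(f) division}; and second, that finite-rank free modules over a division ring carry a well-behaved dimension theory in which linear endomorphisms obey the rank--nullity relation. The care needed is therefore mainly in confirming that $L_a$ is genuinely $\mathrm{Nuc}_r(S_f)$-linear, so that it is an endomorphism of the module in question, and that the module is indeed free of finite rank so that the dimension argument applies; once this is in place both settings collapse to the short statement ``injective $L_a$ is surjective'' and the proof concludes.
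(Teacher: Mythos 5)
Your proposal is correct and follows essentially the same route as the paper's own proof: reduce to showing each injective $L_a$ is surjective, then in the first case use that $L_a$ is an $S_0$-linear injective endomorphism of a finite-dimensional vector space, and in the second case invoke Proposition~\ref{prop:f irreducible implies E(f) division} to make $\mathrm{Nuc}_r(S_f)$ a division ring so that the injective $\mathrm{Nuc}_r(S_f)$-linear map $L_a$ on a finite-rank free module is surjective. The only cosmetic difference is that the paper first disposes of the associative case via Lemma~\ref{prop:S_f associative division iff irreducible}, which your uniform argument renders unnecessary.
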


\begin{proof}
When $S_f$ is associative the assertion follows by Lemma
\ref{prop:S_f associative division iff irreducible} so suppose $S_f$
is not associative. If $f$ is reducible, $S_f$ is not a division
algebra. Conversely, suppose $f$ is irreducible, so that $S_f$ is
a right division algebra by Theorem \ref{thm:f(t) irreducible iff S_f
right division}. Let $0 \neq a \in S_f$ be arbitrary, then $L_a$ is
injective for all $0 \neq a \in S_f$ by Lemma \ref{lem:L_a injective
but not nec surjective}. We prove $L_a$ is surjective, hence that
$S_f$ is also a left division algebra:
\\ (i) If $S_f$ is a finite-dimensional $S_0$-vector space then
 $L_a$ is clearly surjective by \cite[Chapter IV, Corollary 2.14]{hungerford1980algebra}, since  $L_a$ is $F$-linear.
\\ (ii) Suppose $S_f$ is a free right $\mathrm{Nuc}_r(S_f)$-module of finite rank, then $E(f)$ is a division ring
by Proposition \ref{prop:f irreducible implies E(f) division}.
Furthermore,
$L_a$ is right $\mathrm{Nuc}_r(S_f)$-linear. Therefore $L_a$ is again
surjective by \cite[Chapter IV, Corollary 2.14]{hungerford1980algebra}.
\end{proof}

\begin{theorem} \label{thm:L weak irreducible iff division}
Let $\sigma$ be an automorphism of $D$, $B$ be a  subring of $D$ such
that $D$ is a free right $B$-module of finite rank, and $f \in D[t;\sigma,\delta]$ be $B$-weak semi-invariant. Then
$S_f$ is a
division algebra if and only if $f$ is irreducible. In particular, if $\sigma$ is an automorphism of $D$ and $f$ is right
semi-invariant then $S_f$ is a division algebra if and only if $f$ is irreducible.
\end{theorem}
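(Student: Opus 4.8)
The plan is to reduce the statement to Theorem~\ref{thm:S_f_division_iff_irreducible}, whose hypothesis requires $S_f$ to be a free right $\mathrm{Nuc}_r(S_f)$-module of finite rank. One direction is immediate: if $f$ is reducible then $S_f$ contains zero divisors (as noted in Section~\ref{subsec:structure}, and by Theorem~\ref{thm:f(t) irreducible iff S_f right division}(ii)), so it is not a division algebra. I would therefore assume $f$ is irreducible and concentrate on showing $S_f$ is a division algebra. By Proposition~\ref{prop:L-weak semi-invariant iff L subset Nuc_r(S_f)}, the $B$-weak semi-invariance of $f$ gives $B \subseteq \mathrm{Nuc}_r(S_f)$, and by Proposition~\ref{prop:f irreducible implies E(f) division} irreducibility makes $\mathrm{Nuc}_r(S_f) = E(f)$ an associative division ring. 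Since every module over a division ring is free, the only remaining thing to verify before invoking Theorem~\ref{thm:S_f_division_iff_irreducible} is that $S_f$ has \emph{finite} rank over $\mathrm{Nuc}_r(S_f)$, i.e.\ is finitely generated over it.

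The key step, and the main obstacle, is exactly this finiteness. Because $B \subseteq \mathrm{Nuc}_r(S_f)$ and the right $B$-action on $S_f$ is merely the restriction of the right $\mathrm{Nuc}_r(S_f)$-action (multiplication in $S_f$), it suffices to prove that $S_f$ is finitely generated as a right $B$-module. I would first observe that for $b \in B$ and $x \in S_f$ the product $x \circ b = xb$ is computed in $R$ with no reduction, since $\mathrm{deg}(x) + \mathrm{deg}(b) = \mathrm{deg}(x) < m$, and that in $R$ one has $t^i b = \sum_{j=0}^{i} \Delta_{i,j}(b)\, t^j$ with leading coefficient $\Delta_{i,i}(b) = \sigma^i(b)$. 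It follows that each left $D$-submodule $W_j = D \oplus Dt \oplus \cdots \oplus Dt^{j}$ of polynomials of degree at most $j$ is in fact a right $B$-submodule, yielding a finite filtration $W_0 \subseteq W_1 \subseteq \cdots \subseteq W_{m-1} = S_f$ by right $B$-modules.

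I would then identify the successive quotients. Reducing modulo $W_{j-1}$, the action of $b \in B$ on the class of $d t^j$ is $d\sigma^j(b)\, t^j$, so $W_j / W_{j-1}$ is isomorphic, as a right $B$-module, to $D$ equipped with the twisted action $d \cdot b = d\sigma^j(b)$. Here the hypothesis that $\sigma$ is an automorphism enters: the additive bijection $\sigma^{-j}$ of $D$ satisfies $\sigma^{-j}\big(d\sigma^j(b)\big) = \sigma^{-j}(d)\, b$, hence is an isomorphism from this twisted module onto $D$ with its ordinary right $B$-action, which by hypothesis is free of finite rank $n$. Thus every quotient $W_j/W_{j-1}$ is finitely generated over $B$, and climbing the filtration shows $S_f$ is generated by at most $mn$ elements as a right $B$-module, hence also over $\mathrm{Nuc}_r(S_f)$. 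This supplies the missing hypothesis of Theorem~\ref{thm:S_f_division_iff_irreducible}, which then gives that $S_f$ is a division algebra. The final ``in particular'' clause follows by taking $B = D$, which is free of rank one over itself and for which the notions of $B$-weak semi-invariant and right semi-invariant coincide.
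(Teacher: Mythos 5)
Your proposal is correct, and it reaches the conclusion by a mildly but genuinely different route than the paper. The paper argues directly over $B$: since $\sigma$ is an automorphism, $S_f$ is a free right $D$-module of rank $m$ with basis $1,t,\dots,t^{m-1}$, hence (tower of free modules) a free right $B$-module of finite rank $mn$; as $B\subseteq \mathrm{Nuc}_r(S_f)$ the injective map $L_a$ is right $B$-linear, and the paper invokes \cite[Chapter IV, Corollary 2.14]{hungerford1980algebra} to conclude $L_a$ is bijective. You instead route everything through $\mathrm{Nuc}_r(S_f)$ and Theorem~\ref{thm:S_f_division_iff_irreducible}: you establish finite generation of $S_f$ over $B$ by filtering by degree and identifying each quotient $W_j/W_{j-1}$ with $D$ carrying the $\sigma^j$-twisted right $B$-action (untwisted by the automorphism $\sigma^{-j}$), then pass to the division ring $\mathrm{Nuc}_r(S_f)=E(f)$, where finite generation gives freeness of finite rank for free. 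Your filtration is essentially a hands-on version of the paper's one-line tower argument --- the paper's $B$-basis $\{t^i e_j\}$ realizes the same count $mn$ --- so neither approach buys extra generality in the finiteness step. The one place your detour pays off is the final ``injective implies surjective'' step: you apply it only over the genuine division ring $\mathrm{Nuc}_r(S_f)$, whereas the paper applies the Hungerford corollary (stated for vector spaces over division rings) to a free module over the arbitrary subring $B$, which is a slightly delicate citation unless one first observes that $B$ is, or may be replaced by, a division ring. In that respect your argument is the more robust of the two. The ``in particular'' clause via $B=D$ is handled correctly.
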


\begin{proof}
If $f$ is reducible then $S_f$ is not a division algebra.
Conversely, suppose $f$ is irreducible. Then $S_f$ is a right
division algebra by Theorem \ref{thm:f(t) irreducible iff S_f right
division} so we are left to show $S_f$ is also a left division
algebra. Let $0 \neq a \in S_f$ be arbitrary and recall $L_a$ is
injective by Lemma \ref{lem:L_a injective but not nec surjective}.
Since $f$ is $B$-weak semi-invariant, $B \subseteq \mathrm{Nuc}_r(S_f)$ which implies that
 $L_a$ is right $B$-linear. $S_f$ is a free right $D$-module of rank $m = \mathrm{deg}(f)$
because $\sigma$ is an automorphism. Since $D$ is a free right
$B$-module of finite rank then also $S_f$ is a free right $B$-module
of finite rank. Thus \cite[Chapter IV, Corollary
2.14]{hungerford1980algebra} implies $L_a$ is bijective as required.
\end{proof}

\cite[Theorem 4]{C} yields:

\begin{theorem} \label{thm:bounded}
Let  $f\in R= D[t;\sigma,\delta]$ be irreducible. Then $f$ is bounded
if and only if $S_f$ is free of finite rank as a $ {\rm
Nuc}_r(S_f)$-module. In this case, $S_f$ is a division algebra.
\end{theorem}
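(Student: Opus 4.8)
The plan is to prove the biconditional together with the final "division algebra" assertion by leveraging \cite[Theorem 4]{C}, which supplies the equivalence between boundedness of an irreducible $f$ and the finite-rank freeness of $S_f$ as a $\mathrm{Nuc}_r(S_f)$-module, and then to feed this into the machinery already established in Theorem \ref{thm:S_f_division_iff_irreducible}.

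First I would unpack the hypotheses and the quoted reference. Since $f$ is irreducible, Proposition \ref{prop:f irreducible implies E(f) division} tells us $E(f) = \mathrm{Nuc}_r(S_f)$ is an associative division ring, so $S_f$ is automatically a module over a division ring. The content of \cite[Theorem 4]{C} is that, for irreducible $f$, the module-theoretic finiteness condition (being free of finite rank over the right nucleus) is precisely captured by the ring-theoretic condition that $f$ be bounded, i.e.\ that $Rf$ contains a largest two-sided ideal $Rf^{*} = f^{*}R$. I would state this equivalence as the first half of the proof, citing \cite[Theorem 4]{C} directly, and noting that the bounded polynomial $f^{*}$ is exactly what controls whether the left $R$-module $R/Rf$ has finite length / finite rank over its endomorphism division ring $E(f) \cong \mathrm{End}_R(R/Rf)$ (the isomorphism already used in the proof of Proposition \ref{prop:f irreducible implies E(f) division}).

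Second, for the "In this case" assertion, I would invoke Theorem \ref{thm:S_f_division_iff_irreducible} directly. Once we know $S_f$ is free of finite rank as a $\mathrm{Nuc}_r(S_f)$-module, the hypothesis of case (ii) of that theorem is satisfied, and since $f$ is irreducible we conclude $S_f$ is a division algebra. Concretely: $f$ irreducible gives that $S_f$ is a right division algebra by Theorem \ref{thm:f(t) irreducible iff S_f right division}, while $L_a$ is injective for every nonzero $a$ by Theorem \ref{lem:L_a injective but not nec surjective}(iii); the freeness of $S_f$ over the division ring $\mathrm{Nuc}_r(S_f)$ combined with the right $\mathrm{Nuc}_r(S_f)$-linearity of $L_a$ (recorded in Section \ref{subsec:structure}) forces $L_a$ to be surjective as well, via \cite[Chapter IV, Corollary 2.14]{hungerford1980algebra}. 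Hence every $L_a$ is bijective and $S_f$ is a division algebra.

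The step I expect to require the most care is the honest citation and transcription of \cite[Theorem 4]{C}: the paper phrases boundedness in terms of the two-sided ideal $Rf^{*} = f^{*}R$ being the largest two-sided ideal inside $Rf$, whereas \cite{C} may phrase the finiteness condition in the language of the eigenring $E(f)$ or of $R/Rf$ as a module over its endomorphism ring. I would need to verify that the identification $E(f) = \mathrm{Nuc}_r(S_f)$ (from \cite[Theorem 4]{P15}) correctly translates "finite length as $\mathrm{End}_R(R/Rf)$-module" into "free of finite rank as $\mathrm{Nuc}_r(S_f)$-module" — freeness, rather than mere finite generation, is automatic here precisely because $\mathrm{Nuc}_r(S_f)$ is a division ring by Proposition \ref{prop:f irreducible implies E(f) division}. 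Everything else is a direct appeal to the earlier results, so the real work is confined to this translation between the module-theoretic formulation of \cite{C} and the boundedness condition as stated.
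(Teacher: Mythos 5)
Your proposal is correct and follows essentially the same route as the paper: the biconditional is a direct citation of \cite[Theorem 4]{C}, and the division-algebra conclusion comes from the fact that for irreducible $f$ each $L_a$ is an injective $\mathrm{Nuc}_r(S_f)$-linear endomorphism of a free module of finite rank over the division ring $\mathrm{Nuc}_r(S_f)$, hence surjective. Your detour through Theorem \ref{thm:S_f_division_iff_irreducible}(ii) is just a packaging of the same argument the paper writes out inline.
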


\begin{proof}
The first part of the statement is \cite[Theorem 4]{C}. Since $f$
irreducible, $S_f$ is a right division algebra and $L_a$ is injective
for all  $0\not=a\in S_f$ as observed in \cite[Section 2.,
(7)]{P66}.  The second part then follows from the fact that $S_f$ is
free of finite rank as a $ {\rm Nuc}_r(S_f)$-module,
which means the injective  $ {\rm Nuc}_r(S_f)$-linear map $L_a$ is also surjective.
\end{proof}

For $\sigma=0$ this is \cite[Theorem 2]{P17}.

\begin{corollary}  \label{cor:bounded}
Let $f\in R= D[t;\sigma,\delta]$ be irreducible.
\\ (i) Let $\sigma$ be surjective and  $D={\rm Nuc}_r(S_f)$. Then $f$ is bounded and $S_f$ is a division algebra.
\\ (ii) Let  $f$ be bounded, then $S_f$ is a division algebra.
\end{corollary}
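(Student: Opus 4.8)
The plan is to derive both assertions from Theorem~\ref{thm:bounded}, which already bundles together the two conclusions we are after: for irreducible $f$, boundedness is equivalent to $S_f$ being free of finite rank as a $\mathrm{Nuc}_r(S_f)$-module, and in that situation $S_f$ is automatically a division algebra. So in each part it suffices to check the freeness hypothesis of that theorem.

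Part (ii) requires no extra work. Here $f$ is assumed irreducible and bounded, so Theorem~\ref{thm:bounded} applies verbatim and immediately yields that $S_f$ is a division algebra.

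For part (i) I would first record that the hypothesis $\sigma$ surjective forces $\sigma$ to be an automorphism. Since $D$ is a division ring, the kernel of the (unital) endomorphism $\sigma$ is a proper two-sided ideal and hence $0$, so $\sigma$ is injective (this is also the standing assumption of Section~\ref{subsec:structure}); combined with surjectivity this makes $\sigma$ bijective. With $\sigma$ an automorphism, $S_f$ is a free right $D$-module of rank $m=\mathrm{deg}(f)$, exactly as used in the proof of Theorem~\ref{thm:L weak irreducible iff division}. Since by hypothesis $D=\mathrm{Nuc}_r(S_f)$, this says precisely that $S_f$ is free of finite rank as a $\mathrm{Nuc}_r(S_f)$-module. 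Theorem~\ref{thm:bounded} then gives at once that $f$ is bounded and that $S_f$ is a division algebra.

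The only point needing care, and the main (if modest) obstacle, is the claim that $S_f$ is free as a right $D$-module of rank $m$. The basis $1,t,\dots,t^{m-1}$ is built in as a left $D$-basis, so one must verify it also serves as a right $D$-basis. This is where surjectivity of $\sigma$ is essential: rewriting $\sum_i a_i t^i$ with coefficients on the right uses the relation $t^i c=\sum_{j=0}^{i}\Delta_{i,j}(c)t^j$ together with its inversion, which is available only when $\sigma$ is invertible. Once this rewriting is justified, the remaining steps are a direct appeal to Theorem~\ref{thm:bounded}, so no genuinely new construction or estimate is needed beyond the automorphism observation.
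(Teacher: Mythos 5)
Your proposal is correct and follows essentially the same route as the paper: part (ii) is an immediate application of Theorem~\ref{thm:bounded}, and part (i) upgrades $\sigma$ to an automorphism so that $S_f$ is a free right $D$-module of rank $m$, hence free of finite rank over $\mathrm{Nuc}_r(S_f)=D$, and Theorem~\ref{thm:bounded} gives both conclusions. The extra care you take in justifying the right $D$-module structure is a reasonable elaboration of a step the paper takes for granted.
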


\begin{proof}
(i) If $\sigma$ is surjective then $S_f$ is a right $D$-module, free of rank $m$. Since $D={\rm Nuc}_r(S_f)$, Theorem \ref{thm:bounded}
yields the assertion.
\\ (ii) is trivial.
\end{proof}

If $\sigma$ is an automorphism, $R= D[t;\sigma,\delta]$ has finite rank over its center if and only if $D$
is of finite rank over
$C_t=\{a\in F\,|\, at=ta\}$ if and only if all polynomials of $R$ are bounded and if for all $f$ of degree non-zero, ${\rm
deg}(f^*)/{\rm deg}(f)$ is bounded in $\mathbb{Q}$ ($f^*$ being the bound of $f$) \cite[Theorem IV]{CI}. Since
$C_t= {\rm Const}(\delta)\cap {\rm Fix}(\sigma)=S_0\subset F$ we conclude:

\begin{proposition} \label{prop:important}
Assume $R= D[t;\sigma,\delta]$, $\sigma$ is an automorphism,  and one
of the two following equivalent conditions hold:
\\ (i) $R=D[t;\sigma,\delta]$ has finite rank over its center;
\\ (ii) $D$ has finite rank over $S_0$.
\\ Then every $f\in R$ is bounded. In particular, if $f$ is irreducible then $S_f$ is a division algebra.
\end{proposition}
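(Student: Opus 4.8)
The plan is to read everything off from the structural result \cite[Theorem IV]{CI} recalled immediately before the statement, combined with Theorem~\ref{thm:bounded}. First I would establish the equivalence of (i) and (ii). By \cite[Theorem IV]{CI}, since $\sigma$ is an automorphism, $R$ has finite rank over its center precisely when $D$ has finite rank over $C_t=\{a\in F\,|\,at=ta\}$. As already noted, $C_t={\rm Const}(\delta)\cap{\rm Fix}(\sigma)=S_0$, so this condition reads exactly as $D$ having finite rank over $S_0$. Hence (i) and (ii) are equivalent, and I may work under either hypothesis.

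Next I would derive that every $f\in R$ is bounded. This is again contained in \cite[Theorem IV]{CI}: under the standing assumption that $\sigma$ is an automorphism, the finite-rank condition (i) is equivalent to the statement that all polynomials of $R$ are bounded together with a uniform bound on the ratios ${\rm deg}(f^*)/{\rm deg}(f)$. In particular, (i) forces every nonzero $f\in R$ to be bounded, which is all that is needed for the main assertion.

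Finally, for the ``in particular'' clause I would invoke Theorem~\ref{thm:bounded}. Let $f\in R$ be irreducible. By the previous step $f$ is bounded, so Theorem~\ref{thm:bounded} applies and yields that $S_f$ is free of finite rank as a ${\rm Nuc}_r(S_f)$-module and that $S_f$ is a division algebra; alternatively one may cite Corollary~\ref{cor:bounded}(ii) directly for the last conclusion. Since there is essentially no computational content, the only point that genuinely requires care is the identification $C_t=S_0$, which is what converts the center-theoretic hypothesis of \cite{CI} into the more intrinsic condition (ii); everything else is a direct application of the cited results, so I do not anticipate any real obstacle.
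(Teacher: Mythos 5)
Your proposal is correct and follows exactly the route the paper itself takes: the preceding paragraph of the paper derives everything from \cite[Theorem IV]{CI} together with the identification $C_t={\rm Const}(\delta)\cap{\rm Fix}(\sigma)=S_0$, and the ``in particular'' clause is, as you say, an immediate application of Theorem~\ref{thm:bounded} (equivalently Corollary~\ref{cor:bounded}(ii)). No gaps.
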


 For $\sigma=0$, this is  \cite[Proposition 3]{P17}.

Suppose now that $\sigma$ is an automorphism. Then $S_f$ is a free right $D$-module of rank $m$
and since $L_a$ is ${\rm Nuc}_r(S_f)$-linear for any non-zero $a\in S_f$, in this case $S_f$ is a division algebra for an
irreducible $f$ if $D\subset {\rm Nuc}_r(S_f)$ or if there is a subalgebra
$B\subset D$ such that $B\subset {\rm Nuc}_r(S_f)$ and $D $ has finite rank as a right $B$-module
 (these conditions were not stated in \cite[p.~13-14]{P66} but seem necessary). We obtain:

\begin{proposition}  \label{prop:new(3)}
 Suppose that $\sigma$  is an automorphism and $f$ is irreducible.
\\ (i) If $D\subset {\rm Nuc}_r(S_f)$ then $S_f$ is a division algebra.
\\ (ii) If there is a subalgebra $B\subset D$ such that
$B\subset {\rm Nuc}_r(S_f)$ and  $D $ is free of finite rank as a right $B$-module
then $S_f$ is a division algebra.
\end{proposition}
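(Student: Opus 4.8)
The plan is to derive this Proposition as an immediate consequence of Theorem~\ref{thm:L weak irreducible iff division}, once the nucleus hypotheses are rephrased as semi-invariance conditions. First I would note that part (i) is simply the special case $B = D$ of part (ii): since $D$ is trivially free of rank $1$ as a right module over itself, it suffices to prove (ii). For (ii), the key observation is that the hypothesis $B \subset \mathrm{Nuc}_r(S_f)$ is, by Proposition~\ref{prop:L-weak semi-invariant iff L subset Nuc_r(S_f)}(i), equivalent to $f$ being right $B$-weak semi-invariant. With this translation in hand, all the hypotheses of Theorem~\ref{thm:L weak irreducible iff division} are met---$\sigma$ is an automorphism, $D$ is free of finite rank as a right $B$-module, and $f$ is $B$-weak semi-invariant and irreducible---so that theorem immediately yields that $S_f$ is a division algebra.

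Should a self-contained argument be preferred over citing Theorem~\ref{thm:L weak irreducible iff division}, I would unwind the underlying mechanism directly. Since $f$ is irreducible, $S_f$ is a right division algebra and $L_a$ is injective for every $0 \neq a \in S_f$ by Theorem~\ref{thm:f(t) irreducible iff S_f right division}; it therefore only remains to show each $L_a$ is surjective. Because $\sigma$ is an automorphism, $S_f$ is a free right $D$-module of rank $m$, and since $D$ is free of finite rank as a right $B$-module, $S_f$ is in turn a free right $B$-module of finite rank. As $B \subset \mathrm{Nuc}_r(S_f)$ and $L_a$ is always $\mathrm{Nuc}_r(S_f)$-linear, $L_a$ is right $B$-linear; an injective $B$-linear endomorphism of a free right $B$-module of finite rank is bijective by \cite[Chapter IV, Corollary 2.14]{hungerford1980algebra}. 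Hence $L_a$ is bijective for all $0 \neq a$, which together with the bijectivity of $R_a$ shows $S_f$ is a division algebra.

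There is essentially no obstacle here, as the statement is a repackaging of Theorem~\ref{thm:L weak irreducible iff division}; the only point requiring any care is the transitivity of the free-module structure, namely that freeness of $S_f$ over $D$ combined with freeness of $D$ of finite rank over $B$ delivers freeness of $S_f$ over $B$ of finite rank, so that the injective-implies-bijective principle can be invoked at the level of $B$-modules rather than $D$-modules.
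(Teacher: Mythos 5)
Your proof is correct and, in its unwound form, coincides with the paper's own argument: irreducibility of $f$ gives injectivity of $L_a$ via Theorem \ref{thm:f(t) irreducible iff S_f right division}, the hypothesis $B\subset{\rm Nuc}_r(S_f)$ makes $L_a$ right $B$-linear, and freeness of $S_f$ of finite rank as a right $B$-module (obtained from freeness over $D$, using that $\sigma$ is an automorphism) upgrades injectivity to bijectivity. Your shortcut through Proposition \ref{prop:L-weak semi-invariant iff L subset Nuc_r(S_f)}(i) and Theorem \ref{thm:L weak irreducible iff division}, with (i) as the case $B=D$, is equally valid and is really just a repackaging, since the proof of that theorem is the same argument.
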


\begin{proof}
 $S_f$ is a right $D$-module and left multiplication $L_a$ is ${\rm Nuc}_r(S_f)$-linear, so in particular $D$-linear.
Since $f$ is irreducible,  $L_a$ is injective for all nonzero
$a\in S_f$.
 If $D\subset {\rm Nuc}_r(S_f)$
then $S_f$ is a free right $D$-module of rank $m$, and if there is a
subalgebra $B\subset D$ such that $B\subset {\rm Nuc}_r(S_f)$ and $S$ is free of finite rank as a right $B$-module, then
 $S_f$ is a free right $B$-module of finite rank. Thus $L_a$ is bijective for all nonzero $a\in S_f$.
\end{proof}

%
%

\section{Irreducibility criteria for some polynomials  in $R = D[t;\sigma]$}
\label{section:Irreducibility Criteria in D[t;sigma]}

Let $D$ be a division algebra over $F$ and $f(t) = t^m - \sum_{i=0}^{m-1} a_i t^i \in R = D[t;\sigma]$.

\subsection{} There are already several irreducibility criteria for $f$ available
 in the literature. We start by collecting some that are useful for constructing (right) division algebras $S_f$
 for the convenience of the reader.

We first determine the
remainder after dividing $f(t)$ on the right by $t-b$ where $ b \in D$. By \cite[p.~15ff]{J96} we have
 $(t-b) \vert_r f(t)$ is
equivalent to
\begin{equation}  \label{prop:rightdivdegree1}
a_m N_m(b) - \sum_{i=0}^{m-1} a_i N_i (b) = 0
\end{equation}
 where $N_i (b) = \sigma^{i-1} (b) \cdots \sigma(b) b$ for $i
> 0$ and $N_0 (b) = 1$, i.e. to this remainder being zero.

When $\sigma$ is an automorphism of $D$, we can also determine the
remainder after dividing $f(t)$ on the left by $(t-b), \ b \in D$:
Similarly to \cite[p.~15ff]{J96}
we have
\begin{equation} \label{eqn:Left division identity}
\begin{split}
t^i - b \sigma^{-1}(b)& \cdots \sigma^{1-i}(b) = (t-b) \Big(t^{i-1 }
+ \sigma^{-1}(b)t^{i-2}+ \\ & \sigma^{-1}(b) \sigma^{-2}(b)t^{i-3} +
\ldots + \sigma^{-1}(b) \sigma^{-2}(b) \cdots \sigma^{1-i}(b)\Big)
\end{split}
\end{equation}
for all $i \in \mathbb{N}$. Multiplying \eqref{eqn:Left division
identity} on the right by $\sigma^{-i}(a_i)$, and using $a_i t^i =
t^i \sigma^{-i} (a_i)$ gives
\begin{equation*}
\begin{split}
&a_i t^i - b \sigma^{-1}(b) \cdots \sigma^{1-i}(b) \sigma^{-i}(a_i)
\\ &= (t-b) \Big(t^{i-1} + \sigma^{-1}(b)t^{i-2} + \ldots +
\sigma^{-1}(b) \sigma^{-2}(b) \cdots \sigma^{1-i}(b) \Big)
\sigma^{-i}(a_i).
\end{split}
\end{equation*}
Summing over $i$, we obtain $$f(t) = (t-b)q(t) + M_m (b) -
\sum_{i=0}^{m-1} M_i(b) \sigma^{-i}(a_i),$$ for some $q(t) \in R$
where  $M_0 (b) = 1$, $M_1(b) = b$ and $M_i(b) = b \sigma^{-1}(b)
\cdots \sigma^{1-i}(b)$ for $i \geq 2$. We immediately conclude:

\begin{proposition} \label{prop:leftdivdegree1}
Suppose $\sigma$ is an automorphism. Then $(t-b) \vert_l f(t)
\ $ if and only if $ \ M_m (b) - \sum_{i=0}^{m-1} M_i(b)
\sigma^{-i}(a_i) = 0.$
\end{proposition}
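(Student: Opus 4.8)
The plan is to read the result off directly from the explicit left-division identity derived immediately above the statement, using uniqueness of the remainder; essentially all of the computational content is already in the surrounding text, so the proposition is a bookkeeping consequence of it.

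First I would invoke the left-division algorithm recorded in Section \ref{subsec:structure}: since $\sigma$ is an automorphism of $D$ and $t-b$ is monic, there exist \emph{unique} $q(t), r(t) \in R$ with $\deg(r) < \deg(t-b) = 1$ such that $f(t) = (t-b)q(t) + r(t)$; in particular $r(t) \in D$. By the definition of left divisibility, $(t-b) \vert_l f(t)$ holds if and only if this remainder $r(t)$ vanishes.

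Next I would match this against the computation preceding the statement. Summing the scaled copies of the telescoping identity \eqref{eqn:Left division identity} over $i$ and using the commutation rule $a_i t^i = t^i \sigma^{-i}(a_i)$ produces a decomposition of $f(t)$ of exactly the form above, whose trailing $D$-valued term is $M_m(b) - \sum_{i=0}^{m-1} M_i(b)\sigma^{-i}(a_i)$. Since this term has degree strictly less than $1$, it is a legitimate remainder, so by the uniqueness just invoked it must coincide with $r(t)$. Combining the two steps yields the claimed equivalence: $(t-b) \vert_l f(t)$ if and only if $M_m(b) - \sum_{i=0}^{m-1} M_i(b)\sigma^{-i}(a_i) = 0$.

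The only genuine point of care is the telescoping identity \eqref{eqn:Left division identity} itself, which is what converts the abstract remainder into a closed form; I would confirm it by a short induction on $i$, verifying that $(t-b)$ times the indicated quotient collapses to $t^i - b\sigma^{-1}(b)\cdots\sigma^{1-i}(b)$ as the intermediate terms cancel in consecutive pairs. Once that identity is established the proposition is immediate, as the wording of the statement indicates, and no substantive further obstacle remains.
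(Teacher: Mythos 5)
Your proposal is correct and follows essentially the same route as the paper: the paper derives the decomposition $f(t) = (t-b)q(t) + M_m(b) - \sum_{i=0}^{m-1} M_i(b)\sigma^{-i}(a_i)$ from the telescoping identity \eqref{eqn:Left division identity} and reads off the proposition immediately. Your explicit appeal to uniqueness of the remainder under left division (valid since $\sigma$ is an automorphism and $t-b$ is monic) is the right way to make the "immediately conclude" step rigorous, and adds nothing beyond what the paper intends.
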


\begin{corollary} \label{cor:left iff right divisor}
Suppose $\sigma$ is an automorphism and $f(t) = t^m - a \in
D[t;\sigma]$. Then $f(t)$ has a left linear divisor if and only if it
has a right linear divisor.
\end{corollary}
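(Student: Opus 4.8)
The plan is to specialize the right- and left-divisibility criteria just obtained to the polynomial $f(t)=t^m-a$, and then to exhibit an explicit bijection between its right and left linear divisors induced by a power of $\sigma$.

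First I would read off the two criteria in this special case. Writing $f(t)=t^m-\sum_{i=0}^{m-1}a_it^i$ with $a_0=a$ and $a_i=0$ for $i\geq 1$, and using that the leading coefficient is $1$, the right-division condition \eqref{prop:rightdivdegree1} collapses to $N_m(b)=a$, so that $(t-b)\vert_r f$ if and only if
$$\sigma^{m-1}(b)\sigma^{m-2}(b)\cdots\sigma(b)\,b=a.$$
Likewise, Proposition \ref{prop:leftdivdegree1} collapses to $M_m(c)=a$ (since $M_0(c)=1$ and only the $i=0$ term survives), so that $(t-c)\vert_l f$ if and only if
$$c\,\sigma^{-1}(c)\cdots\sigma^{1-m}(c)=a.$$

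The key step is the substitution $c=\sigma^{m-1}(b)$, which is available precisely because $\sigma$ is an automorphism. Then $\sigma^{-j}(c)=\sigma^{m-1-j}(b)$ for $0\leq j\leq m-1$, so the $m$ factors of $M_m(c)$ are, in order, $\sigma^{m-1}(b),\sigma^{m-2}(b),\dots,\sigma(b),b$; that is, $M_m(\sigma^{m-1}(b))=N_m(b)$. Consequently $N_m(b)=a$ holds if and only if $M_m(\sigma^{m-1}(b))=a$. Since $\sigma$ is an automorphism, $b\mapsto\sigma^{m-1}(b)$ is a bijection of $D$, so $f$ possesses a right linear divisor $t-b$ if and only if it possesses the left linear divisor $t-\sigma^{m-1}(b)$; running the substitution in reverse via $b=\sigma^{1-m}(c)$ supplies the other implication.

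I do not expect a serious obstacle here: once the two criteria are specialized, the entire content is the bookkeeping of the $\sigma$-exponents, namely checking that the substitution $c=\sigma^{m-1}(b)$ reverses the product $N_m(b)$ into $M_m(c)$. The only place the hypothesis is used is in forming the negative powers of $\sigma$ and in guaranteeing that $b\mapsto\sigma^{m-1}(b)$ is a bijection, which is exactly why $\sigma$ must be assumed to be an automorphism.
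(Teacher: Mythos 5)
Your proposal is correct and follows essentially the same route as the paper: specialize \eqref{prop:rightdivdegree1} and Proposition \ref{prop:leftdivdegree1} to $f(t)=t^m-a$ and observe that the substitution $c=\sigma^{m-1}(b)$ turns the condition $\sigma^{m-1}(b)\cdots\sigma(b)b=a$ into $c\,\sigma^{-1}(c)\cdots\sigma^{1-m}(c)=a$, with the automorphism hypothesis making $b\mapsto\sigma^{m-1}(b)$ a bijection. The exponent bookkeeping $M_m(\sigma^{m-1}(b))=N_m(b)$ is exactly the paper's argument.
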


\begin{proof}
Let $b \in D$, then $(t-b) \vert_r f(t)$ is equivalent to
$\sigma^{m-1}(b) \cdots \sigma(b) b = a$ by
\eqref{prop:rightdivdegree1}, if and only if $c \sigma^{-1}(c) \cdots
\sigma^{1-m}(c) = a$ where $c = \sigma^{m-1}(b)$, if and only if
$(t-c) \vert_l f(t)$ by Proposition \ref{prop:leftdivdegree1}.
\end{proof}

 The following
were stated but not proven by Petit in \cite[(17), (18)]{P66} and are
direct consequences of  \eqref{prop:rightdivdegree1} and Proposition
\ref{prop:leftdivdegree1}:

\begin{theorem} \label{thm:Petit_factor}
(i)  $f(t) = t^2 - a_1 t - a_0 \in D[t;\sigma]$ is irreducible if and only if
\begin{equation*} \label{eqn:Petit (17)}
\sigma(b)b - a_1 b - a_0 \neq 0,
\end{equation*}
for all $b \in D.$
\\ (ii) Suppose $\sigma$ is an automorphism. $f(t) = t^3 - a_2 t^2 - a_1 t - a_0 \in D[t;\sigma]$ is irreducible if and only if
\begin{equation*} \label{eqn:Petit (18) 1}
\sigma^2 (b) \sigma(b) b - \sigma^2 (b)\sigma(b) a_2 - \sigma^2 (b)
\sigma(a_1) - \sigma^2 (a_0) \neq 0,
\end{equation*}
and
\begin{equation*} \label{eqn:Petit (18) 2}
\sigma^2 (b) \sigma(b) b - a_2 \sigma(b) b - a_1 b - a_0 \neq 0,
\end{equation*}
for all $b \in D.$
\end{theorem}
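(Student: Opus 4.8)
The plan is to derive both statements as immediate applications of the two divisibility criteria established just above: equation \eqref{prop:rightdivdegree1} for right linear divisors and Proposition \ref{prop:leftdivdegree1} for left linear divisors. The key structural observation is that for a polynomial of degree $2$ or $3$, irreducibility is equivalent to having no linear factors, since any proper factorization of a degree-$2$ or degree-$3$ polynomial must involve a linear factor. For degree $2$ this is clear; for degree $3$ the factorization $f = gh$ with $1 \le \deg(g), \deg(h) \le 2$ forces one of the factors to be linear. Thus in both cases $f$ is irreducible if and only if it has \emph{no} linear right divisor and \emph{no} linear left divisor.

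For part (i), I would apply \eqref{prop:rightdivdegree1} directly to $f(t) = t^2 - a_1 t - a_0$. Here $m = 2$, $a_2 = 1$, and using $N_0(b) = 1$, $N_1(b) = b$, $N_2(b) = \sigma(b)b$, the criterion $(t-b)\mid_r f(t)$ becomes $\sigma(b)b - a_1 b - a_0 = 0$. Hence $f$ has a right linear factor for some $b \in D$ precisely when this expression vanishes, so the stated condition $\sigma(b)b - a_1 b - a_0 \neq 0$ for all $b$ is equivalent to $f$ having no right linear factor. Since over a division ring a degree-$2$ polynomial is reducible if and only if it has a right linear factor, this already yields irreducibility; note that no automorphism hypothesis is needed in (i), which is why only the right-division criterion appears.

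For part (ii), where $\sigma$ is assumed to be an automorphism, I would apply both criteria to $f(t) = t^3 - a_2 t^2 - a_1 t - a_0$. Feeding the coefficients into \eqref{prop:rightdivdegree1} with $m = 3$, $N_3(b) = \sigma^2(b)\sigma(b)b$, $N_2(b) = \sigma(b)b$, $N_1(b) = b$, gives the second displayed inequality as the condition for no right linear factor: $\sigma^2(b)\sigma(b)b - a_2 \sigma(b)b - a_1 b - a_0 \neq 0$. Separately, applying Proposition \ref{prop:leftdivdegree1} with $M_3(b) = b\sigma^{-1}(b)\sigma^{-2}(b)$ and the shifted coefficients $\sigma^{-i}(a_i)$, then rewriting the resulting expression — after a change of variable (replacing $b$ by a suitable $\sigma$-conjugate, as in Corollary \ref{cor:left iff right divisor}) — yields the first displayed inequality $\sigma^2(b)\sigma(b)b - \sigma^2(b)\sigma(b)a_2 - \sigma^2(b)\sigma(a_1) - \sigma^2(a_0) \neq 0$ as the condition for no left linear factor. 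Since for a cubic any proper factor list contains a linear polynomial that is either a left or a right divisor, the conjunction of the two inequalities over all $b$ is exactly irreducibility.

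The main obstacle is the bookkeeping in part (ii): the left-division criterion from Proposition \ref{prop:leftdivdegree1} is naturally phrased in terms of the $M_i(b)$ and $\sigma^{-i}(a_i)$, whereas the stated first inequality is written in the ``forward'' $\sigma^2, \sigma$ form. Reconciling these requires carefully substituting $c = \sigma^2(b)$ (the degree-$3$ analogue of the substitution in Corollary \ref{cor:left iff right divisor}) and applying $\sigma^2$ to clear inverse powers, which transforms $M_3(c) - M_2(c)\sigma^{-2}(a_2) - M_1(c)\sigma^{-1}(a_1) - a_0$ into the displayed form. This is routine but must be done with attention to the ordering of the noncommutative factors and to the exact placement of the automorphisms; everything else reduces to the elementary fact that reducibility of a quadratic or cubic over a division ring is detected by linear factors.
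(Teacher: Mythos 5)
Your proposal is correct and follows essentially the same route as the paper: reduce irreducibility in degree $2$ and $3$ to the absence of right (and, for the cubic, left) linear divisors, then translate via \eqref{prop:rightdivdegree1} and Proposition \ref{prop:leftdivdegree1}, applying $\sigma^2$ to the left-divisibility condition to put it in the stated form. The only cosmetic difference is that the paper simply applies $\sigma^2$ to the whole identity $b\sigma^{-1}(b)\sigma^{-2}(b)-b\sigma^{-1}(b)\sigma^{-2}(a_2)-b\sigma^{-1}(a_1)-a_0=0$ rather than also performing a change of variable, but this does not affect correctness.
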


\begin{proof}
(i) $f(t)$ is irreducible if and only if $t-b \nmid_r f(t)$ for all $b \in D$, if and only if
$N_2 (b) - a_1 N_1 (b) - a_0 N_0(b) = \sigma(b) b - a_1 b - a_0 \neq
0,$ for all $b \in D$ by  \eqref{prop:rightdivdegree1}.
\\ (ii)  $f(t)$ is irreducible if and only if $t-b \nmid_r f(t)$ and $t-b \nmid_l f(t)$ for all $b \in D$, if and only if
\begin{equation*} \label{eqn:Petit_factor 1}
\sigma^2 (b) \sigma(b) b - a_2 \sigma(b)b - a_1 b - a_0 \neq 0,
\end{equation*}
and
\begin{equation} \label{eqn:Petit_factor 2}
b \sigma^{-1}(b) \sigma^{-2}(b) - b \sigma^{-1}(b) \sigma^{-2}(a_2) -
b \sigma^{-1}(a_1) - a_0 \neq 0,
\end{equation}
for all $b \in D$ by  \eqref{prop:rightdivdegree1} and Proposition
\ref{prop:leftdivdegree1}. Applying $\sigma^2$ to
\eqref{eqn:Petit_factor 2} we obtain the assertion.
\end{proof}

\begin{corollary} \label{cor:Irreducibility t^3-a}
Suppose $\sigma$ is an automorphism, then $f(t) = t^3 - a \in
D[t;\sigma]$ is irreducible if and only if $\sigma^2 (b) \sigma(b) b
\neq a$ for all $b \in D$.
\end{corollary}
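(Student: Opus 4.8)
The plan is to read the statement directly off Theorem~\ref{thm:Petit_factor}(ii) by specializing to $a_2 = a_1 = 0$ and $a_0 = a$. With these substitutions every term of the first displayed inequality carrying a factor $a_2$ or $\sigma(a_1)$ vanishes, so it collapses to $\sigma^2(b)\sigma(b)b - \sigma^2(a) \neq 0$, while the second collapses to $\sigma^2(b)\sigma(b)b - a \neq 0$. Thus $f(t) = t^3 - a$ is irreducible if and only if both $\sigma^2(b)\sigma(b)b \neq \sigma^2(a)$ and $\sigma^2(b)\sigma(b)b \neq a$ hold for every $b \in D$.

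The only genuine content is then to see that these two families of conditions are equivalent, so that their conjunction reduces to the single inequality claimed. Since $\sigma$ is an automorphism, the plan is to apply $\sigma^{-2}$ to a putative solution of $\sigma^2(b)\sigma(b)b = \sigma^2(a)$: as $\sigma^{-2}$ is a ring automorphism this yields $b\,\sigma^{-1}(b)\,\sigma^{-2}(b) = a$, and setting $b' = \sigma^{-2}(b)$ (which ranges over all of $D$ as $b$ does) rewrites the left-hand side as $\sigma^2(b')\sigma(b')b'$, producing a solution of $\sigma^2(b')\sigma(b')b' = a$. Running the same argument with $\sigma^2$ in place of $\sigma^{-2}$ gives the converse, so $\sigma^2(b)\sigma(b)b = \sigma^2(a)$ has a solution in $b$ if and only if $\sigma^2(b)\sigma(b)b = a$ does. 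Consequently the first inequality holds for all $b$ exactly when the second does, and the conjunction is equivalent to $\sigma^2(b)\sigma(b)b \neq a$ for all $b \in D$, as required.

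There is no real obstacle here beyond keeping track of the twist by $\sigma^2$ that relates the left- and right-division conditions. As a sanity check, I would note that the same conclusion can be reached more conceptually: since $f$ has degree $3$ with invertible leading coefficient, it is reducible precisely when it admits a linear factor on the left or on the right, and Corollary~\ref{cor:left iff right divisor} shows that for $f(t) = t^3 - a$ a left linear divisor exists if and only if a right one does. Hence reducibility is equivalent to the existence of a right linear divisor $t - b$, which by \eqref{prop:rightdivdegree1} (with $a_0 = a$ and all higher coefficients zero) means $\sigma^2(b)\sigma(b)b = a$ for some $b$; negating this recovers the stated irreducibility criterion.
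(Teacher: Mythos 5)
Your proposal is correct and, at its core, uses the same idea as the paper: the paper's proof simply invokes Corollary~\ref{cor:left iff right divisor} to reduce irreducibility of the cubic to the absence of right linear divisors and then applies \eqref{prop:rightdivdegree1}, which is exactly the ``sanity check'' route you describe at the end. Your main argument via Theorem~\ref{thm:Petit_factor}(ii) is also valid, but the $\sigma^{\pm 2}$-twist you use to identify the two families of conditions is precisely the content (and proof) of Corollary~\ref{cor:left iff right divisor} specialized to $m=3$, so you are re-deriving rather than citing it.
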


\begin{proof}
By Corollary \ref{cor:left iff right divisor}, $f(t)$ has a right
linear divisor if and only if it has a left linear divisor. Therefore
$f(t)$ is irreducible if and only if $(t-b) \nmid_r f(t)$ for all $b
\in D$, if and only if $\sigma^2 (b) \sigma(b) b \neq a$ for all $b
\in D$  by \eqref{prop:rightdivdegree1}.
\end{proof}

\begin{lemma} \label{lem:f(bt)=q(bt)g(bt)}
Let $f(t) \in R = D[t;\sigma]$ and suppose $f(t) = q(t) g(t)$ for
some $q(t), g(t) \in R$. Then $f(bt) = q(bt) g(bt)$ for all $b \in
S_0 = F \cap \mathrm{Fix}(\sigma)$.
\end{lemma}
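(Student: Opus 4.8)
The plan is to realize the substitution $t \mapsto bt$ as a ring endomorphism of $R = D[t;\sigma]$ and then simply apply it to the given factorization $f = qg$. Concretely, I would define a map $\phi_b \colon R \to R$ by $\phi_b(t) = bt$ and $\phi_b(c) = c$ for all $c \in D$, so that $\phi_b(h(t)) = h(bt)$ for every $h \in R$; the assertion $f(bt) = q(bt)g(bt)$ is then nothing but $\phi_b(f) = \phi_b(q)\phi_b(g)$, which is immediate once $\phi_b$ is known to be multiplicative.

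First I would verify that $\phi_b$ is well defined as a ring endomorphism. By the universal property of the skew polynomial ring $D[t;\sigma]$ (with $\delta = 0$), a ring homomorphism out of $R$ fixing $D$ and sending $t$ to an element $x$ of a ring $R'$ exists as soon as $x$ satisfies the defining relation $x c = \sigma(c) x$ for all $c \in D$. Taking $R' = R$ and $x = bt$, I would check this relation directly: $(bt)c = b(tc) = b\sigma(c)t$ while $\sigma(c)(bt) = \sigma(c)bt$, so the relation holds precisely because $b \in F = C(D)$ is central and therefore commutes with $\sigma(c)$. This yields a unique ring endomorphism $\phi_b$ of $R$ with $\phi_b|_D = \mathrm{id}_D$ and $\phi_b(t) = bt$.

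Next I would record the effect of $\phi_b$ on monomials. Using $tc = \sigma(c)t$ repeatedly one computes $(bt)^i = b\,\sigma(b)\cdots \sigma^{i-1}(b)\,t^i$, and since $b \in \mathrm{Fix}(\sigma)$ each $\sigma^j(b) = b$, giving $(bt)^i = b^i t^i$. Hence for $h(t) = \sum_i c_i t^i$ we indeed have $\phi_b(h) = \sum_i c_i b^i t^i = h(bt)$, confirming that the homomorphism $\phi_b$ agrees with the naive substitution $t \mapsto bt$; this is the only place where the hypothesis $b \in \mathrm{Fix}(\sigma)$, as opposed to merely $b$ being central, is actually used.

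Applying the homomorphism $\phi_b$ to $f = qg$ then gives $f(bt) = \phi_b(f) = \phi_b(q)\phi_b(g) = q(bt)g(bt)$, as claimed. The only point requiring genuine care, and hence the main (minor) obstacle, is the verification that $bt$ satisfies the skew relation $ (bt)c = \sigma(c)(bt)$, which is exactly where centrality of $b$ enters; once that is in place, the conclusion is a formal consequence of functoriality.
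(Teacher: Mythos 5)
Your proof is correct, but it takes a different route from the paper's. The paper argues by bare hands: it writes $q(t)=\sum_i q_i t^i$, $g(t)=\sum_j g_j t^j$, expands $f(t)=q(t)g(t)=\sum_{i,j}q_i\sigma^i(g_j)t^{i+j}$, and then expands $q(bt)g(bt)$ term by term, using that $b$ is central and fixed by $\sigma$ to slide the powers of $b$ past the coefficients $\sigma^i(g_j)$ and collect them into $(bt)^{i+j}$. You instead package the substitution $t\mapsto bt$ as a ring endomorphism $\phi_b$ of $R$ via the universal property of $D[t;\sigma]$, reducing the lemma to the single verification $(bt)c=\sigma(c)(bt)$, which holds because $b\in F$ is central; the identity $(bt)^i=b^it^i$, which is where $b\in\mathrm{Fix}(\sigma)$ enters, then just reconciles $\phi_b(h)$ with the naive substitution $h(bt)$. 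Your accounting of which hypothesis is used where is accurate and is not made explicit in the paper. The conceptual gain of your version is that multiplicativity of $\phi_b$ immediately handles factorizations into arbitrarily many factors and iterated substitutions, both of which are implicitly needed later in the proof of Theorem \ref{thm:bourbaki} (where the lemma is applied to $h(\omega t)=q_j(\omega t)g_j(\omega t)$ repeatedly); the paper's version is more elementary and avoids invoking the universal property, at the cost of a slightly opaque index computation.
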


\begin{proof}
Write $q(t) = \sum_{i=0}^{l} q_i t^i$, \ $g(t) = \sum_{j=0}^{n} g_j
t^j$, then $$f(t) = q(t) g(t) = \sum_{i=0}^{l} \sum_{j=0}^{n} q_i t^i
g_j t^j = \sum_{i=0}^{l} \sum_{j=0}^{n} q_i \sigma^i(g_j) t^{i+j},$$
and so
\begin{align*}
q(bt) g(bt) &= \sum_{i=0}^{l} q_i (bt)^i \sum_{j=0}^{n} g_j (bt)^j =
\sum_{i=0}^{l} \sum_{j=0}^{n} q_i \sigma^i(g_j) b^{i+j} t^{i+j} \\ &=
\sum_{i=0}^{l} \sum_{j=0}^{n} q_i \sigma^i(g_j) (bt)^{i+j} = f(bt),
\\
\end{align*}
for all $b \in S_0$.
\end{proof}

The following result was stated as an Exercise by Bourbaki in
\cite[p.~344]{bourbaki1973elements} and proven in the special case
where $\sigma$ is an automorphism of order $m$ in \cite[Proposition
3.7.5]{cohn1995skew}. We include a proof as we will use this method a
second time for the proof of Theorem \ref{thm:Petit(19), delta not 0}.

\begin{theorem} \label{thm:bourbaki}
Let $\sigma$ be an endomorphism of $D$, $f(t) = t^m - a \in R =
D[t;\sigma]$ and suppose $S_0 = F \cap \mathrm{Fix}(\sigma)$ contains
a primitive $m$th root of unity  $\omega $. If $g(t) \in R$
is a monic irreducible polynomial dividing $f(t)$ on the right, then
the degree $d$ of $g(t)$ divides $m$ and $f(t)$ is the product of
$m/d$ polynomials of degree $d$.
\end{theorem}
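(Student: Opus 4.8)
The plan is to exploit the substitution $t \mapsto \omega t$ supplied by Lemma~\ref{lem:f(bt)=q(bt)g(bt)} and to reduce the statement to a fact about the semisimple module $R/Rf$. We may assume $a \neq 0$, the case $a=0$ being immediate. First I would record that, since $\omega \in S_0 = F \cap \mathrm{Fix}(\sigma)$ is central and fixed by $\sigma$, one has $(\omega t)^i = \omega^i t^i$ for all $i$, so the additive map $\phi\colon R \to R$, $\sum p_i t^i \mapsto \sum p_i \omega^i t^i$, sends $t \mapsto \omega t$ and fixes $D$ elementwise. By Lemma~\ref{lem:f(bt)=q(bt)g(bt)} this $\phi$ is multiplicative, hence a ring endomorphism, and it is bijective because $\omega^{-1} = \omega^{m-1} \in S_0$; thus $\phi$ is a ring automorphism of $R$ of order $m$. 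The key computation is $\phi(f)(t) = f(\omega t) = \omega^m t^m - a = t^m - a = f(t)$, since $\omega^m = 1$; more generally $\phi^k(f) = f$ for all $k$.

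Next I would manufacture a whole family of monic irreducible right divisors of $f$ from $g$. Writing $f = c\,g$ and applying $\phi^k$ gives $f = \phi^k(c)\,\phi^k(g)$, so each $\phi^k(g)$ is a right divisor of $f$ and its monic normalisation $g^{(k)}(t) := \omega^{-kd} g(\omega^k t)$ is a monic irreducible right divisor of $f$ (irreducibility is preserved because $\phi^k$ is an automorphism). A short check shows $\phi$ permutes the collection $\{g^{(0)}, \dots, g^{(m-1)}\}$ cyclically. I would then pass to the least common left multiple $L = \mathrm{lclm}(g^{(0)}, \dots, g^{(m-1)})$, taken monic; it exists because $R$ is a left principal ideal domain (right division by monic polynomials is always available), and since each $g^{(k)}$ right divides $f$ we have $Rf \subseteq \bigcap_k R g^{(k)} = RL$, so $L$ right divides $f$. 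Because $\phi$ permutes the ideals $R g^{(k)}$, it fixes their intersection $RL$, which forces $\phi(L) = \omega^{\deg L} L$, i.e. $L(\omega t) = \omega^{\deg L} L(t)$.

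Comparing coefficients in $L(\omega t) = \omega^{\deg L} L(t)$ and using that $\omega$ is a primitive $m$th root of unity, I would deduce that $L$ involves only powers $t^i$ with $i \equiv \deg L \pmod m$. Since $1 \le d \le \deg L \le m$, the surviving possibilities collapse: if $\deg L < m$ then $L = t^{\deg L}$, which cannot right divide $t^m - a$ when $a \neq 0$ unless $\deg L = 0$, contradicting $\deg L \ge d \ge 1$. Hence $\deg L = m$, and a monic right divisor of $f$ of degree $m$ must equal $f$; so $f = \mathrm{lclm}(g^{(0)}, \dots, g^{(m-1)})$ is a least common left multiple of irreducible polynomials all of degree $d$.

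Finally I would translate this into the factorisation statement. From $Rf = \bigcap_k R g^{(k)}$ the module $R/Rf$ embeds into $\bigoplus_k R/R g^{(k)}$; as each $g^{(k)}$ is irreducible the latter is semisimple, so $R/Rf$ is semisimple and each of its composition factors is isomorphic to some simple module $R/R g^{(k)}$, of $D$-dimension $d$. Counting $D$-dimensions in a composition series of $R/Rf$ gives $m = \dim_D(R/Rf) = (m/d)\cdot d$, whence $d \mid m$ and the series has length $m/d$. Reading that composition series through the standard dictionary between submodules of $R/Rf$ and monic right divisors of $f$ yields a factorisation $f = e_1 e_2 \cdots e_{m/d}$ into irreducible polynomials each of degree $d$, as required. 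I expect the main obstacle to be the middle step: establishing $\phi$ as a genuine order-$m$ automorphism fixing $f$ and extracting the rigidity $\phi(L) = \omega^{\deg L} L$ that pins down $\deg L = m$; once $f$ is exhibited as an $\mathrm{lclm}$ of equal-degree irreducibles, the dimension count and the composition-series dictionary are routine.
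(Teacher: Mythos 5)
Your proposal is correct and follows essentially the same route as the paper: form the conjugates $g(\omega^i t)$, show that their least common left multiple (equivalently, the generator of $\bigcap_i Rg(\omega^i t)$) is forced to be $f$ itself by the rigidity argument comparing $h(\omega t)$ with $\omega^{\deg h}\,h(t)$, and then read off the factorisation into $m/d$ equal-degree irreducibles. The only difference is cosmetic and sits in the final step, where the paper cites Ore's theorem that an lclm of irreducibles factors into polynomials similar to them (hence of the same degree), while you rederive the same fact via semisimplicity of $R/Rf$ and a $D$-dimension count along a composition series.
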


\begin{proof}
Let $g(t) \in R$ be a monic irreducible polynomial of degree $d$
dividing $f(t)$ on the right.

Define $g_{i} (t) = g(\omega^i t)$ for all $i \in \{ 0, \ldots,
m-1\}$. Then $\bigcap_{i=0}^{m-1} R g_{i}(t)$ is an ideal of $R$, and
since $R$ is a left principle ideal domain, we have
\begin{equation} \label{eqn:bourbaki 1}
Rh(t) = \bigcap_{i=0}^{m-1} R g_{i}(t),
\end{equation}
for a suitably chosen $h(t) \in R$. Furthermore, we may assume $h(t)$
is monic, otherwise if $h(t)$ has leading coefficient $d \in
D^{\times}$, then $Rh(t) = R(d^{-1}h(t))$.

We show $f(t) \in Rh(t)$: As $g(t)$ right divides $f(t)$, we can
write $f(t) = q(t)g(t)$ for some $q(t) \in R$. In addition, we have
$(\omega t)^i = \omega^i t^i$ for all $i \in \{ 0, \ldots, m-1 \}$
because $\omega \in S_0$, therefore $$f(\omega^i t) = \omega^{mi}
t^m-a = t^m - a = f(t) = q(\omega^i t) g(\omega^i t)$$ by Lemma
\ref{lem:f(bt)=q(bt)g(bt)} and so $g_{i}(t)$ right divides $f(t)$ for
all $i \in \{ 0, \ldots, m-1 \}$. This means $$f(t) \in
\bigcap_{i=0}^{m-1} R g_{i}(t) = Rh(t),$$ in particular, $Rh(t)$ is
not the zero ideal.

We next show $h(\omega^i t) = h(t)$ for all $i \in \{ 0, \ldots , m-1
\}$: we only do this for $i = 1$, the other cases are similar. Notice
$h(t) \in \bigcap_{j=0}^{m-1} R g_{j}(t)$ by \eqref{eqn:bourbaki 1}
and thus there exists $q_0(t), \ldots, q_{m-1}(t) \in R$ such that
$h(t) = q_j(t) g_{j}(t)$, for all  $j \in \{ 0, \ldots , m-1 \}$.
Therefore $$h(\omega t) = q_{m-1}(\omega t) g_{m-1}(\omega t) =
q_{m-1}(\omega t) g_0(t),$$ and $$h(\omega t) = q_j(\omega t)
g_{j}(\omega t) = q_j(\omega t) g_{j+1}(t) \in Rg_{j+1}(t),$$ for all
$j \in \{ 0, \ldots , m-2 \}$ by Lemma \ref{lem:f(bt)=q(bt)g(bt)},
which implies $$h(\omega t) \in \bigcap_{j=0}^{m-1} R g_{j}(t) =
Rh(t).$$ As a result $h(\omega t) = k (t) h(t)$ for some $k(t) \in
R$, and by comparing degrees, we conclude $0 \neq k(t) = k \in D$.
Suppose $h(t)$ has degree $l$ and write $$h(t) = a_0 + \ldots +
a_{l-1}t^{l-1} + t^l, \ a_j \in D,$$ here $Rg(t) \supseteq Rh(t)$ and
$f(t) \in Rh(t)$ which yields $\mathrm{deg}(g(t)) = d \leq l \leq m$.
Since $h(\omega t) = k h(t)$, we have $k t^l = (\omega t)^l = \big(
\prod_{j=0}^{l-1} \sigma^j(\omega) \big) t^l = \omega^l t^l$ which
implies $k = \omega^l$. Clearly, the coefficients $a_j$ must be zero
for all $j \in \{ 1, \ldots, l-1 \}$, otherwise $a_j (\omega t)^j = k
a_j t^j = \omega^l a_j t^j$ giving $\omega^j = \omega^l$, a
contradiction as $\omega$ is a primitive $m^{\text{th}}$ root of
unity. This means $h(t) = t^l + a_0$, and with
$\omega^l t^l + a_0 =h(\omega t) = k h(t) = \omega^l (t^l + a_0) = \omega^l t^l + \omega^l a_0,$
we obtain $\omega^l = 1$. This implies $l = m$ and $k =
\omega^m = 1$, hence $h(\omega t) = h(t)$.

We next prove $h(t) = f(t)$: Now $f(t) \in Rh(t)$ implies $f(t) = t^m
- a = p(t)(t^m + a_0)$ for some $p \in R$. Comparing degrees we see
$p \in D^{\times}$, thus $t^m - a = p(t^m + a_0) = pt^m + p a_0$
which yields $p=1$ , $a_0 = -a$ and $f(t) = h(t)$.

Finally, $\bigcap_{i=0}^{m-1} R g_{i}(t) = Rf(t)$ is equivalent to
$f(t)$ being the least common left multiple  of the $g_{i}(t)$, $i
\in \{ 0, \ldots, m-1 \}$ \cite[p.~10]{J96}. As a result, we can
write $$f(t) = q_{i_r}(t) q_{i_{r-1}}(t) \cdots q_{i_1}(t),$$ by
\cite[p.~496]{O1}, where $i_1 = 0 < i_2 < \ldots < i_r \leq m-1$ and
each $q_{i_s}(t) \in R$ is similar to $g_{i_s}(t)$. Similar
polynomials have the same degree \cite[p.~14]{J96} so $r = m/d$, and
$f(t)$ factorises into $m/d$ irreducible polynomials of degree $d$.
\end{proof}

Theorem \ref{thm:bourbaki} implies \cite[Lemma 10]{Am2}, cf. also \cite[Theorem 6 (iii)]{P15.2}, which
improves \cite[(19)]{P66}:

\begin{theorem} \label{thm:Petit(19)}
Suppose $m$ is prime, $\sigma$ is an endomorphism of $D$ and $S_0 = F \cap \mathrm{Fix}(\sigma)$
contains a primitive $m$th root of unity. Then $f(t) = t^m
- a \in D[t;\sigma]$ is irreducible if and only if it has no right
linear divisors, if and only if $$a \neq \sigma^{m-1} (b) \cdots
\sigma(b) b$$
 for all $b \in D$.
\end{theorem}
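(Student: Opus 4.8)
The plan is to establish the stated chain of equivalences from right to left, isolating the one genuinely substantial implication. The equivalence between ``$f$ has no right linear divisor'' and ``$a \neq \sigma^{m-1}(b)\cdots\sigma(b)b$ for all $b \in D$'' is immediate from the right-division criterion \eqref{prop:rightdivdegree1}: for $f(t)=t^m-a$ we are in the case $a_m=1$, $a_0=a$ and $a_i=0$ for $1\le i\le m-1$, so \eqref{prop:rightdivdegree1} reduces to $N_m(b)-a=0$, i.e. $(t-b)\mid_r f(t)$ holds precisely when $\sigma^{m-1}(b)\cdots\sigma(b)b = a$. Hence $f$ has no right linear divisor if and only if $\sigma^{m-1}(b)\cdots\sigma(b)b \neq a$ for every $b\in D$, which is one of the two asserted equivalences.

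For the remaining equivalence, the forward direction is trivial: since $m$ is prime we have $m\ge 2$, so an irreducible $f$ can have no proper factor, in particular no right linear divisor. The substance is the converse, which I would prove by contraposition using Theorem \ref{thm:bourbaki}. Suppose $f$ is reducible. Since $R=D[t;\sigma]$ is a domain in which every nonzero polynomial has an invertible leading coefficient, any proper factorization of $f$ refines to one into irreducible factors; extracting the rightmost such factor and normalizing its leading coefficient to $1$ (which does not change the left ideal it generates) yields a \emph{monic irreducible} right divisor $g$ of $f$ of some degree $d$ with $1\le d<m$. Because $S_0$ contains a primitive $m$th root of unity, Theorem \ref{thm:bourbaki} applies and forces $d\mid m$. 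As $m$ is prime and $d<m$, we get $d=1$, so $g=t-b$ is a right linear divisor of $f$, contradicting the hypothesis that $f$ has none. Therefore $f$ is irreducible.

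The main obstacle is entirely absorbed into Theorem \ref{thm:bourbaki}, whose root-of-unity hypothesis is exactly what pins down $d\mid m$; without it, an irreducible right divisor of degree $d$ need not have $d$ dividing $m$, and primality of $m$ would give no leverage. The only point in the argument above requiring a little care is the passage from an arbitrary proper factorization to a monic irreducible right divisor of degree strictly less than $m$: one uses that a right divisor of a right divisor of $f$ is again a right divisor of $f$, and that left multiplication by a unit of $D^{\times}$ leaves $Rg$ unchanged, so normalizing $g$ to be monic is harmless. Once these observations are in place, primality of $m$ does all the remaining work.
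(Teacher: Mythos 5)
Your proposal is correct and follows essentially the same route as the paper: both reduce the statement to the right-division criterion \eqref{prop:rightdivdegree1} and then use Theorem \ref{thm:bourbaki} on a monic irreducible right divisor of degree $d$, with primality of $m$ forcing $d=1$ or $d=m$. The only cosmetic difference is that you phrase the key implication as a contraposition, while the paper argues directly; the substance is identical.
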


\begin{proof}
Let $g(t) \in D[t;\sigma]$ be an irreducible polynomial of degree $d$
dividing $f(t)$ on the right. Without loss of generality $g(t)$ is
monic, otherwise if $g(t)$ has leading coefficient $c \in
D^{\times}$, then $c^{-1}g(t)$ is monic and also right divides
$f(t)$. Thus $d$ divides $m$ by Theorem \ref{thm:bourbaki} and since
$m$ is prime, either $d = m$, in which case $g(t) = f(t)$, or $d =
1$, which means $f(t)$ can be written as a product of $m$ linear
factors. Therefore $f(t)$ is irreducible if and only if $(t-b)
\nmid_r f(t)$ for all $b \in D$, if and only if $a \neq \sigma^{m-1}
(b) \cdots \sigma(b) b$, for all $b \in D$ by
\eqref{prop:rightdivdegree1}.
\end{proof}

\subsection{Skew polynomials of degree four}
Suppose $\sigma$ is an automorphism of $D$ and $f(t) = t^4 - a_3 t^3
- a_2 t^2 - a_1 t - a_0 \in R = D[t;\sigma]$. Then either $f(t)$ is
irreducible, $f(t)$ is divisible by a linear factor from the right,
from the left, or $f(t) = g (t) h(t)$ for some $g(t), h(t) \in R$ of
degree $2$. In \eqref{prop:rightdivdegree1} and Proposition
\ref{prop:leftdivdegree1} we computed the remainders after dividing
$f(t)$ by a linear polynomial on the right and the left. We now
compute the remainder after dividing $f(t)$ by $t^2 - c t - d$ on the
right,  with $ c, d \in D$. To do this we use the identities
\begin{equation} \label{eqn:degree 4 t^2 identity 1}
t^2 = (t^2 - c t - d) + (c t + d),
\end{equation}
\begin{equation} \label{eqn:degree 4 t^2 identity 2}
t^3 = (t + \sigma(c)) \big( t^2 - c t - d \big) + \big( \sigma(d) +
\sigma(c)c \big) t + \sigma(c) d,
\end{equation}
and
\begin{equation} \label{eqn:degree 4 t^2 identity 3}
\begin{split}
t^4 &= \big( t^2 + \sigma^2 (c) t + \sigma^2 (d) + \sigma^2 (c)
\sigma(c) \big) \big( t^2 - c t - d \big) \\ &+ \big( \sigma^2 (c)
\sigma(c) c + \sigma^2 (d)c + \sigma^2 (c) \sigma(d) \big) t +
\sigma^2 (d)d + \sigma^2 (c) \sigma(c) d.
\end{split}
\end{equation}
If we define $$M_0 (c , d)(t) = 1, \ M_1 (c , d)(t) = t, \ M_2 (c ,
d)(t) = c t + d$$ $$M_3 (c , d)(t) = \big( \sigma(d) + \sigma(c)c
\big)t + \sigma(c) d,$$ $$M_4 (c ,d)(t) = \big( \sigma^2 (c)
\sigma(c) c + \sigma^2 (d)c + \sigma^2 (c) \sigma(d) \big) t +
\sigma^2 (d)d + \sigma^2 (c) \sigma(c) d,$$ then multiplying
\eqref{eqn:degree 4 t^2 identity 1}, \eqref{eqn:degree 4 t^2 identity
2} and \eqref{eqn:degree 4 t^2 identity 3} on the left by $a_i$ and
summing over $i$ yields $$f(t) = q(t) \big( t^2 - c t - d \big) + M_4
(c , d)(t) - \sum_{i=0}^{3} a_i M_i (c , d )(t)$$ for some $q(t) \in
R$. Thus the remainder after dividing $f(t)$ on the right by $t^2 - c
t - d$ is $$M_4 (c ,d )(t) - \sum_{i=0}^3 a_i M_i (c , d )(t),$$
which evidently implies:

\begin{proposition} \label{prop:degree 4 right divide by quadratic}
 $f(t) = t^4 - a_3 t^3 - a_2 t^2 - a_1 t - a_0 \in R = D[t;\sigma]$.
$(t^2 - c t - d) \vert_r f(t)$ is equivalent to $$\sigma^2 (c)
\sigma(c)c + \sigma^2 (d)c + \sigma^2 (c) \sigma(d) - a_3 \big(
\sigma(d) + \sigma(c)c \big) - a_2 c - a_1 = 0,$$ and $$\sigma^2 (d)d
+ \sigma^2 (c) \sigma(c) d - a_3 \sigma(c)d - a_2 d - a_0 = 0.$$
\end{proposition}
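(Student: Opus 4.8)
The plan is to reduce everything to the uniqueness of right division. Since $t^2 - ct - d$ is monic, the division statement recalled in Section~\ref{subsec:structure} provides a unique $q(t) \in R$ and a unique remainder $r(t)$ with $\deg(r) < 2$ such that $f(t) = q(t)(t^2 - ct - d) + r(t)$. By definition $(t^2 - ct - d) \vert_r f(t)$ is equivalent to $r(t) = 0$. Writing $r(t) = \alpha t + \beta$ with $\alpha,\beta \in D$, this is in turn equivalent to the two scalar conditions $\alpha = 0$ and $\beta = 0$. Thus the whole proposition reduces to computing $\alpha$ and $\beta$ explicitly and recognising them as the left-hand sides of the two displayed equations.

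To obtain $r(t)$ I would reduce each power $t^i$ occurring in $f$ modulo $t^2 - ct - d$. Writing $t^i \equiv M_i(c,d)(t)$ with $\deg M_i < 2$, the linear polynomials $M_0, \ldots, M_4$ are built by the recursion $t^{i} = t \cdot t^{i-1}$, replacing $t^2$ by $ct + d$ and using the twisting rule $ta = \sigma(a)t$ (here $\delta = 0$) at each step; this is precisely the content of the identities \eqref{eqn:degree 4 t^2 identity 1}--\eqref{eqn:degree 4 t^2 identity 3} established just before the statement. Since $f(t) = t^4 - \sum_{i=0}^{3} a_i t^i$, substituting these reductions and gathering the multiples of $t^2 - ct - d$ into a single quotient $q(t)$ yields
$$f(t) = q(t)(t^2 - ct - d) + M_4(c,d)(t) - \sum_{i=0}^{3} a_i M_i(c,d)(t),$$
so by uniqueness of the remainder $r(t) = M_4(c,d)(t) - \sum_{i=0}^{3} a_i M_i(c,d)(t)$.

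It then remains to separate the two coefficients of this remainder, which has degree less than $2$. Collecting the coefficient of $t$ from the explicit expressions for the $M_i$ gives $\alpha$, whose vanishing is exactly the first displayed identity; collecting the constant term gives $\beta$, whose vanishing is the second. There is no structural obstacle here, and I expect the argument to be entirely computational; the only point demanding care is the bookkeeping of the twist, since each time $t$ is pushed to the left past a coefficient it introduces an application of $\sigma$, which is why $\sigma$ and $\sigma^2$ surface in $M_3$ and $M_4$ and hence in the final equations. Once the two coefficients are correctly read off, the stated equivalence is immediate.
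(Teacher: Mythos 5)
Your proposal is correct and follows essentially the same route as the paper: the text preceding the proposition establishes the identities \eqref{eqn:degree 4 t^2 identity 1}--\eqref{eqn:degree 4 t^2 identity 3}, multiplies them on the left by the $a_i$ and sums to get $f(t) = q(t)(t^2-ct-d) + M_4(c,d)(t) - \sum_{i=0}^{3} a_i M_i(c,d)(t)$, and then reads off the vanishing of the $t$-coefficient and the constant term of the remainder, exactly as you do. Your explicit appeal to uniqueness of the remainder in right division by a monic polynomial is the (implicit) justification the paper compresses into ``which evidently implies.''
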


 Propositions
\ref{prop:leftdivdegree1} and \ref{prop:degree 4 right divide by
quadratic} together with  \eqref{prop:rightdivdegree1} yield:

\begin{theorem} \label{thm:degree 4 irreducibility criteria}
 $f(t) = t^4 - a_3 t^3
- a_2 t^2 - a_1 t - a_0 \in R = D[t;\sigma]$ is irreducible if and only if
\begin{equation} \label{eqn:degree 4 irreducible 1}
\sigma^3 (b) \sigma^2 (b) \sigma(b) b + a_3 \sigma^2 (b) \sigma(b) b
+ a_2 \sigma(b) b + a_1 b + a_0 \neq 0,
\end{equation}
and
\begin{equation} \label{eqn:degree 4 irreducible 2}
\begin{split}
&\sigma^3 (b) \sigma^2 (b) \sigma (b) b + \sigma^3 (b) \sigma^2 (b)
\sigma (b) a_3+ \\ &\sigma^3 (b) \sigma^2 (b) \sigma (a_2) + \sigma^3
(b) \sigma^2 (a_1) + \sigma^3 (a_0) \neq 0,
\end{split}
\end{equation}
for all $b \in D$, and for every $ c , d \in D$, we have
\begin{equation} \label{eqn:degree 4 irreducible 3}
\sigma^2 (c) \sigma(c)c + \sigma^2 (d)c + \sigma^2 (c) \sigma(d) +
a_3 (\sigma(d) + \sigma(c)c) + a_2 c + a_1 \neq 0,
\end{equation}
 or
\begin{equation} \label{eqn:degree 4 irreducible 4}
\sigma^2 (d)d + \sigma^2 (c) \sigma(c) d + a_3 \sigma(c)d + a_2 d +
a_0 \neq 0.
\end{equation}
\end{theorem}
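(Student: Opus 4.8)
The plan is to characterise irreducibility through the non-existence of proper monic factors and then to feed the factorisation into the three remainder computations already at our disposal. Since $D$ is a division ring, degrees are additive under multiplication, so a reducible monic $f$ admits a factorisation $f=gh$ with $1\le\deg g,\deg h\le 3$ and $\deg g+\deg h=4$. First I would observe that both factors may be taken monic: replacing a right factor $h$ by $h_k^{-1}h$, where $h_k$ is its leading coefficient, and absorbing $h_k$ into the cofactor makes $h$ monic; comparing leading coefficients in $f=\tilde g\,\hat h$ then forces the cofactor $\tilde g$ to be monic as well, because $f$ is monic and $\sigma$ fixes $1$. Consequently $f$ is reducible if and only if it has a monic right factor of degree $1$, $2$ or $3$, which yields exactly three shapes: a monic right linear factor $t-b$, a monic right quadratic factor $t^2-ct-d$, or a monic right cubic factor whose monic degree-one cofactor $t-b$ is then a left linear factor $(t-b)\vert_l f$. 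Here $\sigma\in\mathrm{Aut}(D)$ is precisely what makes left division, and hence this last case, legitimate. This recovers the dichotomy recorded just before the subsection; note that a monic right quadratic factor already captures every $(2,2)$ factorisation, its cofactor being the complementary quadratic, so no separate left-quadratic test is required.

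With the dichotomy in place, each case is converted into a non-vanishing condition by the remainder formulas. Absence of a right linear factor means the remainder in \eqref{prop:rightdivdegree1} never vanishes, giving \eqref{eqn:degree 4 irreducible 1} for all $b\in D$. Absence of a right quadratic factor means that for every $c,d\in D$ the pair of remainder equations in Proposition \ref{prop:degree 4 right divide by quadratic} does not hold simultaneously; negating a conjunction turns it into the disjunction ``\eqref{eqn:degree 4 irreducible 3} or \eqref{eqn:degree 4 irreducible 4}''. Absence of a left linear factor means the remainder of Proposition \ref{prop:leftdivdegree1} never vanishes; since that remainder involves negative powers of $\sigma$, I would, exactly as in the proof of Theorem \ref{thm:Petit_factor}(ii), apply $\sigma^3$ throughout and relabel $b$ using surjectivity of $\sigma$, thereby clearing the negative powers and arriving at \eqref{eqn:degree 4 irreducible 2} for all $b\in D$.

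Finally I would assemble the three statements: $f$ is irreducible precisely when it possesses none of the three factor types, that is, when \eqref{eqn:degree 4 irreducible 1} and \eqref{eqn:degree 4 irreducible 2} hold for all $b$ and, for every $c,d$, at least one of \eqref{eqn:degree 4 irreducible 3} and \eqref{eqn:degree 4 irreducible 4} holds. The computations themselves are mechanical once the three remainders are in hand; the step carrying genuine content is the reduction to monic factors together with the recognition that a cubic right factor is detected not by a separate cubic test but by the left linear criterion, because its monic cofactor is linear. The only further point demanding care is the bookkeeping of ``and'' versus ``or'': the quadratic case contributes a disjunction rather than a conjunction, since quadratic divisibility requires both remainder coefficients to vanish, and the left-division remainder must be normalised by $\sigma^3$ before it matches \eqref{eqn:degree 4 irreducible 2}.
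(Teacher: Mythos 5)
Your proposal is correct and follows essentially the same route as the paper: reduce to the non-existence of a monic right linear, left linear (detecting cubic right factors via their linear cofactor), or right quadratic factor, and then translate each case via \eqref{prop:rightdivdegree1}, Proposition \ref{prop:leftdivdegree1} (normalised by $\sigma^3$) and Proposition \ref{prop:degree 4 right divide by quadratic}, with the quadratic case yielding a disjunction. You supply more justification for the exhaustiveness of the three cases than the paper does, and your derivation of \eqref{eqn:degree 4 irreducible 1}--\eqref{eqn:degree 4 irreducible 4} from the cited remainder formulas is the intended one (note only that the signs in front of the $a_i$ in the theorem's displayed conditions correspond to writing $f(t)=t^4+a_3t^3+a_2t^2+a_1t+a_0$).
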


I.e., $f(t)$ is irreducible if and only if \eqref{eqn:degree 4
irreducible 1} and \eqref{eqn:degree 4 irreducible 2} and (\eqref{eqn:degree 4 irreducible 3} or \eqref{eqn:degree 4
irreducible 4}) hold.

\begin{proof}
$f(t)$ is irreducible if and only if $(t-b) \nmid_r f(t)$ for all $b\in D$, $(t-b) \nmid_l f(t)$ for all $b \in D$ and $(t^2 - c t - d)
\nmid_r f(t)$ for all $c, d \in D$. Therefore the result follows from \eqref{prop:rightdivdegree1}, Propositions \ref{prop:leftdivdegree1}
and \ref{prop:degree 4 right divide by quadratic}.
\end{proof}

\begin{lemma} \label{lem:t^4-afactorisation1}
Let $f(t) = t^4 - a \in R$. Suppose $(t-b) \vert_r f(t)$, then $$f(t) = (t + \sigma^3(b))(t^2 + \sigma^2(b) \sigma(b))(t - b),$$ and $$f(t)
= (t^2 + \sigma^3(b) \sigma^2(b))(t + \sigma(b))(t-b),$$ are
factorisations of $f(t)$. In particular, $(t + \sigma(b))(t-b) = t^2 -\sigma(b)b$ also right divides $f(t)$.
\end{lemma}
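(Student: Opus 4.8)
The plan is to reduce both claimed identities to the single scalar relation supplied by the hypothesis, and then to confirm each factorisation by a direct expansion in $D[t;\sigma]$.

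First I would record what the hypothesis $(t-b)\vert_r f(t)$ says concretely. Applying \eqref{prop:rightdivdegree1} to $f(t)=t^4-a$, for which $a_0=a$ and $a_1=a_2=a_3=0$, the remainder after right division by $t-b$ is $N_4(b)-aN_0(b)$, so that $(t-b)\vert_r f(t)$ is equivalent to
\[
\sigma^3(b)\sigma^2(b)\sigma(b)\,b=a.
\]
This scalar identity is the only input from the hypothesis, and I expect both displayed products to collapse precisely to $t^4-\sigma^3(b)\sigma^2(b)\sigma(b)b$, which then equals $f(t)=t^4-a$.

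Next I would verify the first factorisation by expanding from the inside out, using only the commutation rule $t^nc=\sigma^n(c)t^n$ for $c\in D$ (valid since $\delta=0$). Multiplying $(t^2+\sigma^2(b)\sigma(b))$ by $(t-b)$ gives $t^3-\sigma^2(b)t^2+\sigma^2(b)\sigma(b)t-\sigma^2(b)\sigma(b)b$; multiplying this on the left by $(t+\sigma^3(b))$ and pushing each scalar past the appropriate power of $t$, the $t^3$, $t^2$ and $t^1$ contributions cancel in pairs, leaving exactly $t^4-\sigma^3(b)\sigma^2(b)\sigma(b)b$. For the second factorisation I would first compute $(t+\sigma(b))(t-b)=t^2-\sigma(b)b$, the cross terms cancelling, which simultaneously proves the final \emph{in particular} claim once combined with the outer factor; multiplying $(t^2+\sigma^3(b)\sigma^2(b))$ by $t^2-\sigma(b)b$ then telescopes the two $t^2$ terms against each other, yielding $t^4-\sigma^3(b)\sigma^2(b)\sigma(b)b$ again. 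Substituting the scalar identity from the first paragraph turns both expressions into $t^4-a=f(t)$, and since $f(t)=(t^2+\sigma^3(b)\sigma^2(b))\bigl(t^2-\sigma(b)b\bigr)$ we read off that $t^2-\sigma(b)b$ right divides $f(t)$.

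There is no genuine conceptual obstacle here; the whole argument is a finite computation in $D[t;\sigma]$. The only point demanding care is bookkeeping of the twist: each time a coefficient is moved past $t^k$ it must be hit by $\sigma^k$, and it is precisely this that forces the three would-be middle terms to cancel in each product. A useful sanity check I would keep in mind is that, because $(t+\sigma(b))(t-b)=t^2-\sigma(b)b$ appears as a common right factor, the two factorisations are compelled to be mutually consistent, so carefully verifying one expansion essentially guarantees the other.
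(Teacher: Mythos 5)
Your proposal is correct and follows essentially the same route as the paper's own (one-line) proof: expand each product in $D[t;\sigma]$ to obtain $t^4-\sigma^3(b)\sigma^2(b)\sigma(b)b$ and identify this with $t^4-a$ via the right-division criterion \eqref{prop:rightdivdegree1}. Your version simply spells out the twist bookkeeping and the cancellation of the middle terms that the paper leaves implicit.
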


\begin{proof}
Multiplying out these factorisations gives $t^4 - \sigma^3(b)
\sigma^2(b) \sigma(b) b$ which is equal to $f(t)$ by
\eqref{prop:rightdivdegree1}.
\end{proof}

Hence if $f(t) = t^4 - a$ has a right linear divisor then it also has
a right quadratic divisor and Theorem \ref{thm:degree 4
irreducibility criteria} simplifies to:

\begin{corollary} \label{thm:t^4-a irreducibility criteria delta=0}
$f(t) = t^4 - a \in R$ is reducible if and only if
$$\sigma^2 (c) \sigma(c)c + \sigma^2 (d)c + \sigma^2 (c) \sigma(d) =
0 \quad \text{and} \quad \sigma^2 (d)d + \sigma^2 (c) \sigma(c) d =
a,$$ for some $c, d \in D.$
\end{corollary}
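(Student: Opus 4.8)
The plan is to reduce the reducibility of $f(t) = t^4 - a$ to the single condition that $f$ possesses a monic quadratic right divisor $t^2 - ct - d$, and then to read off the two displayed equations directly from Proposition \ref{prop:degree 4 right divide by quadratic}. Since $D$ is a division ring and $\sigma$ is an automorphism, degree is additive under multiplication in $R$, so in any proper factorisation $f = gh$ of the degree-$4$ polynomial $f$ the factors have degrees $(\deg g, \deg h) \in \{(1,3),(2,2),(3,1)\}$. After left-multiplying a factor by the inverse of its leading coefficient, which preserves the relevant one-sided divisibility, I may assume it is monic. Thus $f$ is reducible precisely when it admits a monic left linear divisor (the case $(1,3)$), a monic right quadratic divisor $t^2 - ct - d$ (the case $(2,2)$), or a monic right linear divisor (the case $(3,1)$).

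The key step is to absorb the two linear cases into the quadratic one, exploiting the special shape $t^4 - a$. First I would invoke Lemma \ref{lem:t^4-afactorisation1}: if $t - b$ right divides $f$, then $t^2 - \sigma(b)b = (t+\sigma(b))(t-b)$ is a genuine quadratic right divisor of $f$. Next I would invoke Corollary \ref{cor:left iff right divisor}, which for $f = t^4 - a$ guarantees that a left linear divisor exists if and only if a right linear divisor exists; composing with the previous observation, any left linear divisor again forces the existence of a right quadratic divisor. Consequently all three reducibility possibilities collapse to one: $f$ is reducible if and only if it has a monic right quadratic divisor $t^2 - ct - d$ for some $c,d \in D$, the converse direction being immediate since a degree-$2$ right factor is automatically a proper factor.

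It then remains to specialise Proposition \ref{prop:degree 4 right divide by quadratic} to the coefficients $a_3 = a_2 = a_1 = 0$ and $a_0 = a$. Substituting these values kills every term involving $a_1, a_2, a_3$ and turns the divisibility criterion into exactly $\sigma^2(c)\sigma(c)c + \sigma^2(d)c + \sigma^2(c)\sigma(d) = 0$ together with $\sigma^2(d)d + \sigma^2(c)\sigma(c)d = a$, which is the stated condition.

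I expect the only subtle point, rather than a genuine obstacle, to be the reduction argument in the second paragraph: one must check that Lemma \ref{lem:t^4-afactorisation1} and Corollary \ref{cor:left iff right divisor} between them really cover every way a degree-$4$ polynomial can factor, so that no reducible $f$ escapes the quadratic criterion. Once this equivalence is firmly established, the remaining computation is a direct substitution requiring no further work.
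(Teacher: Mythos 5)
Your proposal is correct and follows essentially the same route as the paper: reduce all factorisation patterns of $t^4-a$ to the existence of a right quadratic divisor via Corollary \ref{cor:left iff right divisor} and Lemma \ref{lem:t^4-afactorisation1}, then specialise Proposition \ref{prop:degree 4 right divide by quadratic} to $a_3=a_2=a_1=0$, $a_0=a$. The only difference is that you spell out the degree case analysis $(1,3)$, $(2,2)$, $(3,1)$ explicitly, which the paper leaves implicit.
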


\begin{proof}
By  Corollary \ref{cor:left iff right divisor}, $f(t)$ has a right
linear divisor if and only if it has a left linear divisor. Moreover
if $f(t)$ has a right linear divisor then it also has a quadratic
right divisor by Lemma \ref{lem:t^4-afactorisation1}, therefore
$f(t)$ is reducible if and only if $(t^2 - c t - d) \vert_r f(t)$ for
some $c, d \in D$. The result now follows from Proposition
\ref{prop:degree 4 right divide by quadratic}.
\end{proof}


\subsection{Examples in $\mathbb{F}_{p^h}[t;\sigma]$} 

Let $K = \mathbb{F}_{p^h}$ be a finite field of order $p^h$ for some prime $p$ and $\sigma$ be a non-trivial $\mathbb{F}_p$-automorphism
of $K$. This means $\sigma: K \rightarrow K, \ k \mapsto k^{p^r},$ for some $r \in \{ 1, \ldots, h-1 \}$. Here $\sigma$ has order $n = h/ \mathrm{gcd}(r,h)$ and ${\rm Gal}(K/\mathrm{Fix}(\sigma))=\langle\sigma\rangle$.
Algorithms for efficiently factorising polynomials in $\mathbb{F}_{p^h}[t;\sigma]$ exist, see \cite{G0} or more recently
\cite{caruso2017new}.

\begin{lemma} \label{lem:gcd number theory result}
$\mathrm{gcd}(p^h-1,p^r-1) = p^{\mathrm{gcd}(h,r)}-1.$
\end{lemma}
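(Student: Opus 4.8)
The statement to prove is the elementary number-theoretic identity
$$\gcd(p^h-1, p^r-1) = p^{\gcd(h,r)}-1.$$

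The plan is to prove the more general fact that for any integer $p \geq 2$ and positive integers $h, r$, we have $\gcd(p^h-1, p^r-1) = p^{\gcd(h,r)} - 1$, and then specialise. First I would reduce to showing the two divisibility statements $p^{\gcd(h,r)}-1 \mid \gcd(p^h-1,p^r-1)$ and $\gcd(p^h-1,p^r-1) \mid p^{\gcd(h,r)}-1$. The first direction is immediate: writing $g = \gcd(h,r)$, since $g \mid h$ and $g \mid r$, the factorisation $x^k - 1 = (x-1)(x^{k-1} + \cdots + 1)$ applied with $x = p^g$ shows $p^g - 1$ divides both $p^h - 1$ and $p^r - 1$, hence divides their gcd.

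The more substantial direction is the reverse divisibility, and this is the step I expect to be the main (though still routine) obstacle. The cleanest approach is to mimic the Euclidean algorithm at the level of exponents. The key lemma is that for $h \geq r$,
$$\gcd(p^h - 1, p^r - 1) = \gcd(p^{h-r}-1, p^r-1),$$
which follows from the identity $p^h - 1 = p^{h-r}(p^r - 1) + (p^{h-r} - 1)$, so that $p^h - 1 \equiv p^{h-r} - 1 \pmod{p^r - 1}$. Iterating this exponent subtraction mirrors exactly the subtractive form of the Euclidean algorithm computing $\gcd(h,r)$; since the exponents track the Euclidean steps on $(h,r)$, after finitely many steps the exponents reach $(\gcd(h,r), \gcd(h,r))$, giving $\gcd(p^h-1, p^r-1) = \gcd(p^{\gcd(h,r)}-1, p^{\gcd(h,r)}-1) = p^{\gcd(h,r)}-1$.

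I would write this up as an induction on $\max(h,r)$ (or on $h + r$), using the reduction identity above as the inductive step and $h = r$ as the base case. The application of interest takes $p$ to be the field characteristic; here $p \geq 2$ is automatic so no degeneracy arises. An alternative, slightly slicker route is to invoke the general principle that $\gcd(a^m - 1, a^n - 1) = a^{\gcd(m,n)} - 1$ for any $a \geq 2$, but since the paper seems to want a self-contained number-theoretic lemma, the Euclidean-algorithm-on-exponents argument is the natural and complete choice.
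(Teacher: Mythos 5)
Your proof is correct, but it takes a genuinely different route from the paper's. You both establish the easy divisibility $p^{\gcd(h,r)}-1 \mid \gcd(p^h-1,p^r-1)$ the same way, via the geometric-series factorisation. For the harder direction, the paper argues directly with B\'ezout: writing $d=\gcd(h,r)=hx+ry$, it takes an arbitrary common divisor $c$ of $p^h-1$ and $p^r-1$, notes $p^h\equiv p^r\equiv 1 \pmod{c}$, and concludes $p^d=(p^h)^x(p^r)^y\equiv 1\pmod{c}$, so $c\mid p^d-1$. You instead run the subtractive Euclidean algorithm on the exponents, using the reduction $\gcd(p^h-1,p^r-1)=\gcd(p^{h-r}-1,p^r-1)$ coming from $p^h-1=p^{h-r}(p^r-1)+(p^{h-r}-1)$, and induct. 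The paper's argument is shorter, but it silently allows $x$ or $y$ to be negative, so the expression $(p^h)^x(p^r)^y$ must really be read as a computation with units modulo $c$ (legitimate, since $p^h$ and $p^r$ are invertible mod $c$, but it deserves a word). Your Euclidean-descent version sidesteps that issue entirely at the cost of an explicit induction; it is the more self-contained of the two and equally valid.
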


\begin{proof}
Let $d = \mathrm{gcd}(r,h)$ so that $h = dn$. We have $$p^{h}-1 =
(p^d-1)(p^{d(n-1)} + \ldots + p^d+1),$$ therefore $p^h-1$ is
divisible by $p^d-1$. A similar argument shows $(p^d-1) \vert
(p^r-1)$. Suppose that $c$ is a common divisor of $p^h-1$ and
$p^r-1$, this means $p^h \equiv p^r \equiv 1 \ \mathrm{mod} \ (c)$.
Write $d = hx + ry$ for some integers $x,y$, then we have $$p^d =
p^{hx + ry} = (p^h)^x (p^r)^y \equiv 1 \ \mathrm{mod} \ (c)$$ which
implies $c \vert (p^d-1)$ and hence $p^d-1 =
\mathrm{gcd}(p^h-1,p^r-1)$.
\end{proof}

Given $k \in K^{\times}$, we have $k \in \mathrm{Fix}(\sigma)$ if and only if $k^{p^r-1} = 1$, if and only if $k$ is a
$(p^r-1)^{\mathrm{th}}$ root of unity. There are
$\mathrm{gcd}(p^r-1,p^h-1)$ such roots of unity in $K$,
thus $$\vert\mathrm{Fix}(\sigma) \vert = \mathrm{gcd}(p^r-1,p^h-1) +
1 =p^{\mathrm{gcd}(r,h)}$$ by Lemma \ref{lem:gcd number theory
result} and so $\mathrm{Fix}(\sigma) \cong \mathbb{F}_{q}$ where $q
=p^{\mathrm{gcd}(r,h)}$. \label{page:Fix(sigma) isomorphic to}

\begin{proposition}
 (i) Suppose $n \in \{ 2,3 \}$, then $f(t) = t^n - a \in K[t;\sigma]$ is irreducible if and only if $a \in K \setminus
 \mathrm{Fix}(\sigma)$.
\\ (ii) Suppose $n$ is a prime and $n \vert (q-1)$. Then $f(t) = t^n - a \in K[t;\sigma]$ is irreducible if and only
if $a \in K \setminus \mathrm{Fix}(\sigma)$.
\\ In particular, in both (i) and (ii), there are precisely $p^{h} - q$ irreducible polynomials in
$K[t;\sigma]$ of the form $t^n-a$ for some $a \in K$.
\end{proposition}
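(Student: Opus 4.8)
The plan is to reduce both parts to a single arithmetic fact about the map
\[ N_n(b) = \sigma^{n-1}(b)\cdots\sigma(b)\,b, \]
namely that its image on $K$ is exactly $\mathrm{Fix}(\sigma)$. First I would record the standing normalisations from the preceding discussion: with $d = \gcd(r,h)$ we have $h = dn$, and the computation just before the statement (resting on Lemma \ref{lem:gcd number theory result}) gives $\mathrm{Fix}(\sigma) \cong \mathbb{F}_q$ for $q = p^{d}$. Since $D = K$ is a field, its centre is $F = K$, so $S_0 = F \cap \mathrm{Fix}(\sigma) = \mathrm{Fix}(\sigma) \cong \mathbb{F}_q$.

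The heart of the argument, and the step I expect to carry the real weight, is the claim that $\{\,N_n(b) : b \in K\,\} = \mathrm{Fix}(\sigma)$. Because $\langle \sigma \rangle = \mathrm{Gal}(K/\mathrm{Fix}(\sigma))$ has order $n$, the factors of $N_n(b)$ range over the full Galois group, so $N_n$ coincides with the field norm $N_{K/\mathbb{F}_q}(b) = b^{(p^{h}-1)/(q-1)}$. Viewing this as an endomorphism of the cyclic group $K^{\times}$ of order $p^{h}-1$, its image is the unique subgroup of order $q-1$, which is precisely $\mathbb{F}_q^{\times} = \mathrm{Fix}(\sigma)^{\times}$; together with $N_n(0) = 0$ this shows the image is exactly $\mathrm{Fix}(\sigma)$. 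Consequently $a = N_n(b)$ for some $b \in K$ if and only if $a \in \mathrm{Fix}(\sigma)$.

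With this in hand the two cases follow directly from the earlier criteria. For part (i) with $n = 2$, Theorem \ref{thm:Petit_factor}(i) gives that $t^2 - a$ is irreducible if and only if $\sigma(b)b = N_2(b) \neq a$ for all $b$, i.e. $a \notin \mathrm{Fix}(\sigma)$; for $n = 3$, Corollary \ref{cor:Irreducibility t^3-a} gives irreducibility if and only if $N_3(b) \neq a$ for all $b$, again $a \notin \mathrm{Fix}(\sigma)$. The reason these subcases invoke the explicit degree-$2$ and degree-$3$ tests rather than Theorem \ref{thm:Petit(19)} is that the latter requires a primitive $n$th root of unity in $S_0$, which fails for $n = 2$ in characteristic $2$. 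For part (ii), since $n \mid (q-1)$ the cyclic group $\mathbb{F}_q^{\times} = S_0^{\times}$ has an element of order $n$, so $S_0$ contains a primitive $n$th root of unity and Theorem \ref{thm:Petit(19)} applies verbatim: $t^n - a$ is irreducible if and only if $a \neq N_n(b)$ for all $b$, i.e. $a \notin \mathrm{Fix}(\sigma)$.

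Finally, in every case irreducibility of $t^n - a$ is equivalent to $a \in K \setminus \mathrm{Fix}(\sigma)$, and distinct $a$ give distinct polynomials, so the number of irreducible polynomials of the form $t^n - a$ equals $|K| - |\mathrm{Fix}(\sigma)| = p^{h} - q$. The only genuine obstacle here is the surjectivity of the norm onto $\mathrm{Fix}(\sigma)$; once that is in place, the remainder is a straightforward appeal to the low-degree and prime-degree irreducibility criteria already established.
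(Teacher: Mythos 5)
Your proposal is correct and follows essentially the same route as the paper: reduce to the criteria of Theorem \ref{thm:Petit_factor}, Corollary \ref{cor:Irreducibility t^3-a} and Theorem \ref{thm:Petit(19)}, identify $N_n$ with the field norm $N_{K/\mathrm{Fix}(\sigma)}$, and invoke surjectivity of the norm onto $\mathrm{Fix}(\sigma)^{\times}$ to conclude that irreducibility is equivalent to $a\notin\mathrm{Fix}(\sigma)$, whence the count $p^h-q$. The only difference is that you spell out the surjectivity via the cyclic group structure of $K^{\times}$, where the paper simply cites it as well known.
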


\begin{proof}
 (i) $f(t)$ is irreducible if and only if
$\prod_{l=0}^{n-1} \sigma^l(b) = N_{K/\mathrm{Fix}(\sigma)}(b) \neq
a$ for all $b \in K$ by Theorem \ref{thm:Petit_factor} or Corollary
\ref{cor:Irreducibility t^3-a}, where $N_{K/\mathrm{Fix}(\sigma)}$ is
the field norm. It is well-known that as $K$ is a finite field,
$N_{K/\mathrm{Fix}(\sigma)}: K^{\times} \rightarrow
\mathrm{Fix}(\sigma)^{\times}$ is surjective and so $f(t)$ is
irreducible if and only if $a \notin \mathrm{Fix}(\sigma)$. There are
$p^{h} - q$ elements in $K \setminus \mathrm{Fix}(\sigma)$, hence
there are precisely $p^{h} - q$ irreducible polynomials of the form
$t^n-a$ for some $a \in K$.
\\ (ii)  $\mathrm{Fix}(\sigma) \cong \mathbb{F}_{q}$ contains a primitive $n^{\mathrm{th}}$ root of unity because
$n \vert (q-1)$ \cite[Proposition II.2.1]{koblitz1994course}. The rest of the proof is similar to (i) but uses Theorem
\ref{thm:Petit(19)}.
\end{proof}

Let $a, b \in K$ and recall $(t-b) \vert_r (t^m-a)$ is equivalent to $a = \sigma^{m-1}(b) \cdots \sigma(b)b = b^s$ by
\eqref{prop:rightdivdegree1} where $s = \sum_{j=0}^{m-1}p^{rj} =(p^{mr}-1)/(p^r-1)$. Suppose $z$ generates the multiplicative group
$K^{\times}$. Writing $b = z^l$ for some $l \in \mathbb{Z}$ yields
$(t-b) \vert_r(t^m-a)$ if and only if $a = z^{ls}$. This implies the following:

\begin{proposition} \label{prop:finite fields irreducibility primitive element}
Let $f(t) = t^m-a \in K[t;\sigma]$ and write $a \in K$ as $a = z^u$
for some $u \in \{ 0, \ldots, p^h-2 \}$.
\\ (i) $(t-b) \nmid_r f(t)$ for all $b \in K$ if and only if
 $u \notin \mathbb{Z} s \ \mathrm{mod} \ (p^h-1).$
\\ (ii) If $m \in \{2, 3\}$ then $f(t)$ is irreducible if and only if
 $u \notin \mathbb{Z} s \ \mathrm{mod} \ (p^h-1).$
\\ (iii) Suppose $m$ is a prime divisor of $(q-1)$, then $f(t)$ is irreducible if and only if  $u \notin \mathbb{Z} s \ \mathrm{mod} \ (p^h-1).$
\end{proposition}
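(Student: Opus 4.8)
The plan is to reduce all three parts to the single observation that $f(t)=t^m-a$ acquires a right linear divisor exactly when $a$ is an $s$-th power in $K^{\times}$, and then to read this off from the cyclic structure of $K^{\times}$.

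For (i) I would begin from the identity recalled immediately before the statement: by \eqref{prop:rightdivdegree1}, $(t-b)\vert_r f(t)$ holds if and only if $a=\sigma^{m-1}(b)\cdots\sigma(b)b=b^{s}$. Writing $b=z^{l}$ and using $a=z^{u}$, the equation $a=b^{s}$ becomes $z^{u}=z^{ls}$, i.e. $u\equiv ls \pmod{p^{h}-1}$. Hence some $b\in K$ satisfies $(t-b)\vert_r f(t)$ precisely when $u$ lies in the cyclic subgroup $\mathbb{Z}s$ of $\mathbb{Z}/(p^{h}-1)$ generated by $s$; since $a=z^{u}\neq 0$ the degenerate case $b=0$ never occurs. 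Negating, $(t-b)\nmid_r f(t)$ for all $b\in K$ if and only if $u\notin\mathbb{Z}s\bmod(p^{h}-1)$, which is exactly (i).

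For (ii) and (iii) the key point is that, under the stated hypotheses, irreducibility of $t^{m}-a$ is equivalent to the absence of a right linear divisor, after which (i) finishes the argument. When $m=2$ I would apply Theorem \ref{thm:Petit_factor}(i) with $a_{1}=0$ and $a_{0}=a$, and when $m=3$ Corollary \ref{cor:Irreducibility t^3-a}; in both cases $t^{m}-a$ is irreducible if and only if $\sigma^{m-1}(b)\cdots\sigma(b)b=b^{s}\neq a$ for all $b\in K$, i.e. if and only if $f$ has no right linear divisor. For (iii) I would invoke Theorem \ref{thm:Petit(19)}: since $m$ is prime, it remains only to check that $S_{0}=F\cap\mathrm{Fix}(\sigma)$ contains a primitive $m$-th root of unity. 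As $D=K$ is a field and $\delta=0$ we have $F=K$ and hence $S_{0}=\mathrm{Fix}(\sigma)\cong\mathbb{F}_{q}$ with $q=p^{\gcd(r,h)}$ (see page \pageref{page:Fix(sigma) isomorphic to}); the cyclic group $\mathbb{F}_{q}^{\times}$ of order $q-1$ contains an element of order $m$ exactly because $m\mid(q-1)$, which supplies the required root of unity. Theorem \ref{thm:Petit(19)} then gives that $f$ is irreducible if and only if it has no right linear divisor, and (i) converts this into the condition $u\notin\mathbb{Z}s\bmod(p^{h}-1)$.

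Since each step rests on results already established, no serious computation is required; the only place demanding care is the verification in (iii) that the root-of-unity hypothesis of Theorem \ref{thm:Petit(19)} is met, which hinges on correctly identifying $S_{0}$ with $\mathrm{Fix}(\sigma)\cong\mathbb{F}_{q}$ and on $m\mid(q-1)$ forcing a primitive $m$-th root of unity into $\mathbb{F}_{q}$.
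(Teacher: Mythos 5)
Your proposal is correct and follows essentially the same route as the paper: part (i) via \eqref{prop:rightdivdegree1} and the cyclic structure of $K^{\times}$, part (ii) by reducing to the absence of right linear divisors (the paper cites Corollary \ref{cor:left iff right divisor} directly, while you cite Theorem \ref{thm:Petit_factor}(i) and Corollary \ref{cor:Irreducibility t^3-a}, which rest on the same fact), and part (iii) via Theorem \ref{thm:Petit(19)} together with $\mathrm{Fix}(\sigma)\cong\mathbb{F}_q$ containing a primitive $m$th root of unity since $m\mid(q-1)$. The extra care you take over the degenerate case $b=0$ and over identifying $S_0$ with $\mathrm{Fix}(\sigma)$ is sound but does not change the argument.
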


\begin{proof}
 (i) $(t-b) \nmid_r f(t)$ for all $b \in K$ if and only if $a = z^u \neq z^{l s}$ for all $l \in \mathbb{Z}$, if and only if $u \notin \mathbb{Z} s \ \mathrm{mod} \ (p^h-1)$.
\\ (ii) $f(t)$ has a left linear divisor if and only if it has a right linear divisor by Corollary \ref{cor:left iff right divisor}. Therefore if $m \in \{2, 3 \}$ then $f(t)$ is irreducible if and only if $(t-b) \nmid_r f(t)$ for all $b \in K$ and so the assertion follows by (i).
\\ (iii) If $m$ is a prime divisor of $(q-1)$ then $\mathrm{Fix}(\sigma) \cong \mathbb{F}_{q}$ contains a primitive $m^{\mathrm{th}}$ root of unity. Therefore the result follows by (i) and Theorem \ref{thm:Petit(19)}.
\end{proof}

\begin{corollary} \label{cor:Finite field t^m-a irreducibility criteria}
 (i) There exists $a \in K$ such that $(t-b) \nmid_r (t^m-a)$ for all $b \in K$ if and only if $\mathrm{gcd}(s,p^h-1) > 1.$
\\ (ii) \cite[(22)]{P66} Suppose $m \in \{ 2,3 \}$ or $m$ is a prime divisor of $(q-1)$.
 Then there exists $a \in K^{\times}$ such that $t^m-a \in K[t;\sigma]$ is irreducible if and only if
 $\mathrm{gcd}(s,p^h-1) > 1.$
\end{corollary}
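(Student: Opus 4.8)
The plan is to reduce both parts to a single elementary statement about the cyclic group $\mathbb{Z}/(p^h-1)\mathbb{Z}$, since Proposition \ref{prop:finite fields irreducibility primitive element} has already translated the divisibility conditions into a congruence. Writing each $a\in K^\times$ as $a=z^u$ with $u\in\{0,\dots,p^h-2\}$ and regarding $u$ as an element of the additive group $\mathbb{Z}/(p^h-1)\mathbb{Z}$, Proposition \ref{prop:finite fields irreducibility primitive element}(i) says that $(t-b)\nmid_r(t^m-a)$ for all $b\in K$ if and only if $u$ lies outside the subset $\mathbb{Z}s \bmod (p^h-1)$. I would first observe that this subset is precisely the cyclic subgroup $\langle s\rangle$ of $\mathbb{Z}/(p^h-1)\mathbb{Z}$ generated by the residue of $s$.

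First I would dispose of the value $a=0$: since $t^m=t^{m-1}\circ t$, the polynomial $t=t-0$ always right divides $t^m$, so $a=0$ never satisfies the required non-divisibility condition and may be ignored; thus only $a\in K^\times$ are relevant. Next I would invoke the standard fact that $\langle s\rangle$ has order $(p^h-1)/d$, where $d=\gcd(s,p^h-1)$, and is therefore a \emph{proper} subgroup of $\mathbb{Z}/(p^h-1)\mathbb{Z}$ if and only if $d>1$. Consequently a residue $u\notin\langle s\rangle$ exists, equivalently an element $a=z^u\in K^\times$ with no right linear divisor exists, if and only if $\gcd(s,p^h-1)>1$. This is exactly statement (i).

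For (ii) I would argue identically, now using Proposition \ref{prop:finite fields irreducibility primitive element}(ii) when $m\in\{2,3\}$ or (iii) when $m$ is a prime divisor of $q-1$. Under either hypothesis these assert that $t^m-a$ is irreducible if and only if it has no right linear divisor, i.e. if and only if $u\notin\langle s\rangle$. Irreducibility forces $a\neq 0$ in any case, so the existence of an irreducible $t^m-a$ with $a\in K^\times$ is again equivalent to $\langle s\rangle\neq\mathbb{Z}/(p^h-1)\mathbb{Z}$, that is, to $\gcd(s,p^h-1)>1$.

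I anticipate no serious obstacle: the entire content is the translation already supplied by Proposition \ref{prop:finite fields irreducibility primitive element}, combined with the order formula for a cyclic subgroup. The only point requiring mild care is keeping the quantifiers aligned, so that ``there exists $a$'' is correctly matched with ``there exists a residue outside $\langle s\rangle$'', and confirming that the degenerate value $a=0$ does not interfere, which the observation $t\mid_r t^m$ settles.
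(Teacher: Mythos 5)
Your proposal is correct and follows essentially the same route as the paper: both reduce the statement to Proposition \ref{prop:finite fields irreducibility primitive element} and to the observation that the residue of $s$ generates $\mathbb{Z}_{p^h-1}$ if and only if $\mathrm{gcd}(s,p^h-1)=1$. Your extra remarks (identifying $\mathbb{Z}s \bmod (p^h-1)$ as the cyclic subgroup $\langle s\rangle$ and disposing of $a=0$ via $t\mid_r t^m$) are harmless elaborations of the same argument.
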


\begin{proof}
There exists $u \in \{ 0, \ldots, p^h-2 \}$ such that $u \notin
\mathbb{Z}s \ \mathrm{mod} \ (p^h-1)$, if and only if $s$ does not
generate $\mathbb{Z}_{p^h-1}$, if and only if $\mathrm{gcd}(s,p^h-1)
> 1.$ Hence the result follows by Proposition \ref{prop:finite fields
irreducibility primitive element}.
\end{proof}

\begin{corollary} \label{cor:p=1mod m finite field irreducibility criteria}
Suppose $p \equiv 1 \ \mathrm{mod} \ m$.
\\ (i) There exists $a \in K$ such that $(t-b) \nmid_r (t^m-a)$ for all $b \in K$.
\\ (ii) If $p$ is an odd prime, then there exists $a \in K^{\times}$ such that $t^2-a \in K[t;\sigma]$ is irreducible.
\\ (iii) If $m = 3$, then there exists $a \in K^{\times}$ such that $t^3-a \in K[t;\sigma]$ is irreducible.
\\ (iv) Suppose $m$ is a prime divisor of $(q-1)$, then there exists $a \in K^{\times}$ such that $t^m-a \in K[t;\sigma]$ is irreducible.
\end{corollary}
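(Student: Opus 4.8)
The plan is to reduce every part of the statement to Corollary \ref{cor:Finite field t^m-a irreducibility criteria}, whose two parts both hinge on the single number-theoretic condition $\gcd(s,p^h-1)>1$, where $s=\sum_{j=0}^{m-1}p^{rj}=(p^{mr}-1)/(p^r-1)$. So the entire task is to verify that the hypothesis $p\equiv 1 \pmod m$ (together with $m\geq 2$, which is implicit since we are factoring $t^m-a$) forces $\gcd(s,p^h-1)>1$. Once this inequality is established, each of (i)--(iv) follows by citing the appropriate clause of Corollary \ref{cor:Finite field t^m-a irreducibility criteria}.

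For the gcd inequality I would exhibit $m$ itself as a common divisor. First I would show $m \mid s$: reducing modulo $m$, the assumption $p\equiv 1\pmod m$ gives $p^{rj}\equiv 1\pmod m$ for every $j\in\{0,\ldots,m-1\}$, whence $s=\sum_{j=0}^{m-1}p^{rj}\equiv m\equiv 0\pmod m$. Second, the same assumption gives $p^h\equiv 1\pmod m$, so $m\mid (p^h-1)$. Therefore $m$ divides both $s$ and $p^h-1$, and since $m\geq 2$ we conclude $\gcd(s,p^h-1)\geq m>1$.

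With this in hand, part (i) is immediate from Corollary \ref{cor:Finite field t^m-a irreducibility criteria}(i). For the remaining parts I would check that the relevant exponent falls under the hypotheses of Corollary \ref{cor:Finite field t^m-a irreducibility criteria}(ii): in (ii) we have $m=2$, and the assumption ``$p$ odd'' is precisely $p\equiv 1\pmod 2$, so $m\in\{2,3\}$ applies; in (iii) we have $m=3\in\{2,3\}$; and in (iv) we assume $m$ is a prime divisor of $q-1$. In each case $p\equiv 1\pmod m$ supplies the condition $\gcd(s,p^h-1)>1$ via the computation above, and Corollary \ref{cor:Finite field t^m-a irreducibility criteria}(ii) then yields an element $a\in K^{\times}$ with $t^m-a$ irreducible. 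The argument is essentially bookkeeping; the only points requiring care are to retain $m\geq 2$ so that the common divisor $m$ genuinely exceeds $1$, and to apply the reduction $p\equiv 1\Rightarrow p^{rj}\equiv 1 \pmod m$ uniformly across all terms of $s$. I do not anticipate any substantive obstacle beyond correctly matching each case to the right clause of the cited corollary.
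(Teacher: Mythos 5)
Your proposal is correct and follows essentially the same route as the paper: both show $m\mid s$ by reducing each term $p^{rj}\equiv 1\pmod m$, note $m\mid(p^h-1)$, conclude $\gcd(s,p^h-1)\geq m>1$, and then invoke Corollary \ref{cor:Finite field t^m-a irreducibility criteria}.
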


\begin{proof}
We have $$s \ \mathrm{mod} \ m = \sum_{i=0}^{m-1} (p^{ri} \
\mathrm{mod} \ m) \ \mathrm{mod} \ m = (\sum_{i=0}^{m-1} 1) \
\mathrm{mod} \ m = 0,$$ and $p^h \equiv 1 \ \mathrm{mod} \ m$. This
means $m \vert s$ and $m \vert (p^h-1)$, therefore $\mathrm{gcd}(
s,p^h-1) \geq m$ and so the assertion follows by Corollary
\ref{cor:Finite field t^m-a irreducibility criteria}.
\end{proof}


\section{Irreducibility criteria for polynomials of degree two and three and for $t^m-a$ in $D[t;\sigma,\delta]$}
\label{section:Irreducibility Criteria in D[t;sigma,delta]}

In this Section we generalize  some results from Section
\ref{section:Irreducibility Criteria in D[t;sigma]} to polynomials in
$R = D[t;\sigma,\delta]$, where $D$ is a division ring with center
$F$ and $\sigma$ an endomorphism of $D$. We recursively define a sequence of maps $N_i : D \rightarrow D, \ i \geq 0$,
 by $$N_0(b) = 1, \ N_{i+1}(b) = \sigma(N_i(b))b + \delta(N_i(b)),$$ i.e., $ N_1(b) = b, \ N_2(b) = \sigma(b)b + \delta(b),
\ldots$

Let $f(t) = t^m - \sum_{i=0}^{m-1} a_i t^i \in R$. Then $(t-b) \vert_r f(t)$ is equivalent to
\begin{equation} \label{prop:rightlinearfactordelta}
N_m(b) - \sum_{i=0}^{m-1} a_i N_i (b) = 0
\end{equation}
\cite[Lemma 2.4]{lam1988vandermonde}.

If $\sigma$ is an automorphism of $D$, we can also view $R=D[t;\sigma,\delta]$ as a right
polynomial ring. In particular, this means we can write $f(t) = t^m -\sum_{i=0}^{m-1} a_i t^i \in R$ in the form $f(t) = t^m -
\sum_{i=0}^{m-1} t^i a_i'$ for some uniquely determined $a_i' \in D$. To find the remainder after left division of $f(t)$
by $(t-b)$, we  recursively define a sequence
of maps $M_i:D \rightarrow D$, $i \geq 0$, by
$$M_{i+1}(b) = b \sigma^{-1}(M_i(b)) - \delta(\sigma^{-1}(M_i(b))), \
M_0(b) = 1,$$ that is $M_0(b) = 1$, \ $M_1(b) = b$, \ $M_2(b) = b
\sigma^{-1}(b) - \delta(\sigma^{-1}(b)), \ldots$

\begin{proposition} \label{prop:leftlinearfactordelta}
Suppose $\sigma$ is an automorphism of $D$. Then
$(t-b) \vert_l f(t)$ is equivalent to $M_m(b) - \sum_{i=0}^{m-1}
M_i(b) a_i' = 0$. In particular, $(t-b) \vert_l (t^m - a)$ if and
only if $M_m(b) \neq a$.
\end{proposition}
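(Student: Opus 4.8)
The plan is to adapt the method used in the $\delta=0$ case (the identity \eqref{eqn:Left division identity} together with Proposition \ref{prop:leftdivdegree1}) to the general skew-derivation setting. The crux is to show that the remainder of left-dividing the single power $t^i$ by $(t-b)$ is exactly $M_i(b)$; that is, for each $i$ there exists $q_i(t)\in R$ with
\[
t^i - M_i(b) = (t-b)q_i(t).
\]
I would prove this by induction on $i$, the base case $i=0$ being immediate since $M_0(b)=1$.

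For the inductive step the key tool is the commutation rule rewritten so as to move a scalar from the left of $t$ to its right. Since $\sigma$ is an automorphism, $\sigma^{-1}$ exists and $ta=\sigma(a)t+\delta(a)$ gives, for every $c\in D$,
\[
ct = t\sigma^{-1}(c) - \delta(\sigma^{-1}(c)).
\]
Applying this with $c=M_i(b)$ and comparing with the defining recursion $M_{i+1}(b)=b\sigma^{-1}(M_i(b))-\delta(\sigma^{-1}(M_i(b)))$ yields, after subtraction, the identity
\[
M_{i+1}(b) = M_i(b)\,t - (t-b)\sigma^{-1}(M_i(b)).
\]
Feeding in the inductive hypothesis $M_i(b)=t^i-(t-b)q_i(t)$, so that $M_i(b)t = t^{i+1}-(t-b)q_i(t)t$, then gives
\[
t^{i+1}-M_{i+1}(b) = (t-b)\bigl(q_i(t)t+\sigma^{-1}(M_i(b))\bigr),
\]
which completes the induction with $q_{i+1}(t)=q_i(t)t+\sigma^{-1}(M_i(b))$.

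Having established this for all $i$, I would write $f$ in its right-polynomial form $f(t)=t^m-\sum_{i=0}^{m-1}t^i a_i'$, substitute $t^i=(t-b)q_i(t)+M_i(b)$ into every term, and collect. All the $(t-b)q_i(t)a_i'$ contributions are absorbed into the quotient, leaving a degree-zero remainder equal to $M_m(b)-\sum_{i=0}^{m-1}M_i(b)a_i'$. Because $\sigma$ is an automorphism, left division by the monic polynomial $(t-b)$ produces a uniquely determined quotient and remainder, as recorded in Section \ref{subsec:structure}, so $(t-b)\vert_l f(t)$ if and only if this remainder vanishes, which is the first assertion. The statement for $f(t)=t^m-a$ is the special case $a_0'=a$ and $a_1'=\dots=a_{m-1}'=0$, where the remainder reduces to $M_m(b)-a$ using $M_0(b)=1$, so that $(t-b)\vert_l(t^m-a)$ precisely when $M_m(b)=a$.

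The main obstacle is entirely the verification of the recursive identity for $M_{i+1}(b)$: one must track the $\delta$-correction that appears when commuting a scalar past $t$ and confirm that the definition of $M_i$ is calibrated precisely so that the $\delta$-term in $M_{i+1}(b)$ cancels the $\delta$-term produced by $ct=t\sigma^{-1}(c)-\delta(\sigma^{-1}(c))$. Once that identity is in place, the remainder of the argument is routine bookkeeping, entirely parallel to the $\delta=0$ computation carried out before Proposition \ref{prop:leftdivdegree1}.
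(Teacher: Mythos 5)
Your proposal is correct and follows essentially the same route as the paper: both prove by induction that $t^i - M_i(b) \in (t-b)R$, using the commutation rule $t\sigma^{-1}(c) = ct + \delta(\sigma^{-1}(c))$ so that the $\delta$-term in the recursion for $M_{i+1}(b)$ cancels exactly, and then multiply by $a_i'$ on the right and sum over $i$ to identify the remainder. (You also correctly read the final claim as $(t-b)\vert_l(t^m-a)$ iff $M_m(b)=a$; the ``$\neq$'' in the statement is a typo.)
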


\begin{proof}
We first show $t^n - M_n(b) \in (t-b)R$ for all $b \in D$ and $n \geq
0$: If $n = 0$ then $t^0 - M_0(b) = 1 - 1 = 0 \in (t-b)R$ as
required. Suppose inductively $t^n - M_n(b) \in (t-b)R$ for some $n
\geq 0$, then
\begin{align*}
&t^{n+1} - M_{n+1}(b) = t^{n+1} - b \sigma^{-1}(M_n(b)) +
\delta(\sigma^{-1}(M_n(b))) \\ &= t^{n+1} + (t-b) \sigma^{-1}(M_n(b))
- t \sigma^{-1}(M_n(b)) + \delta(\sigma^{-1}(M_n(b))) \\ &= t^{n+1} +
(t-b) \sigma^{-1}(M_n(b)) - M_n(b) t - \delta(\sigma^{-1}(M_n(b))) +
\delta(\sigma^{-1}(M_n(b))) \\ &= (t-b) \sigma^{-1}(M_n(b)) + (t^n -
M_n(b))t \in (t-b)R,
\end{align*}
as $t^n-M_n(b) \in (t-b)R$. Therefore $t^n - M_n(b) \in (t-b)R$ for
all $b \in D$, $n \geq 0$ by induction.

As a result, there exists $q_i(t) \in R$ such that $t^i = (t-b)
q_i(t) + M_i(b)$, for all $i \in \{ 0, \ldots, m \}$. Multiplying on
the right by $a_i'$ and summing over $i$ yields $$f(t) = (t-b)q(t) +
M_m(b) - \sum_{i=0}^{m-1} M_i(b) a_i',$$ for some $q(t) \in R$.
\end{proof}

\begin{theorem} \label{thm:irredcriteriadelta}
 (i) $f(t) = t^2 - a_1 t - a_0 \in D[t;\sigma,\delta]$ is irreducible if and only if  $\sigma(b)b + \delta(b) - a_1 b - a_0 \neq 0$  for all $b \in D$.
\\ (ii) Suppose $\sigma$ is an automorphism of $D$ and $f(t) = t^3 - a_2 t^2 - a_1 t - a_0 \in D[t;\sigma,\delta]$.
Write $f(t) = t^3 - t^2 a_2' - t a_1' - a_0'$ for some unique $a_0', a_1', a_2' \in D$. Then $f(t)$ is irreducible if and only if
\begin{equation} \label{eqn:irredcriteriadelta 1}
N_3(b) - \sum_{i=0}^{2}a_i N_i(b) \neq 0,
\end{equation}
and
\begin{equation} \label{eqn:irredcriteriadelta 2}
M_3(b) - \sum_{i=0}^{2} M_i(b) a_i' \neq 0,
\end{equation}
for all $b \in D$.
\end{theorem}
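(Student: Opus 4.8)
The plan is to reduce irreducibility to the non-existence of linear divisors and then invoke the remainder formulas already established. For part (i), the key observation is that a monic polynomial of degree $2$ is reducible if and only if it admits a right linear divisor: any proper factorization $f = gh$ forces $\mathrm{deg}(g) = \mathrm{deg}(h) = 1$, so the right factor $h$ is linear, and conversely a right linear divisor yields a degree-$2$ factorization. Hence $f$ is irreducible if and only if $(t-b) \nmid_r f(t)$ for all $b \in D$. Applying \eqref{prop:rightlinearfactordelta} with $m=2$ and using $N_0(b) = 1$, $N_1(b) = b$, $N_2(b) = \sigma(b)b + \delta(b)$ turns this into the stated inequality $\sigma(b)b + \delta(b) - a_1 b - a_0 \neq 0$.

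For part (ii), I would argue that a monic polynomial of degree $3$ is reducible if and only if it has a right linear divisor or a left linear divisor. Indeed, since $3 = 1+2 = 2+1$ are the only ways to write $3$ as a sum of two positive integers each less than $3$, any proper factorization $f = gh$ has either $\mathrm{deg}(g) = 1$ (so $g$ is a left linear divisor) or $\mathrm{deg}(h) = 1$ (so $h$ is a right linear divisor); conversely each type of linear divisor produces such a factorization. Therefore $f$ is irreducible if and only if $(t-b) \nmid_r f(t)$ and $(t-b) \nmid_l f(t)$ for all $b \in D$.

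I would then translate each condition via the two remainder formulas. The condition $(t-b) \nmid_r f(t)$ for all $b$ becomes \eqref{eqn:irredcriteriadelta 1} by \eqref{prop:rightlinearfactordelta} with $m=3$. Since $\sigma$ is assumed to be an automorphism, $R$ may be viewed as a right polynomial ring and Proposition \ref{prop:leftlinearfactordelta} applies, so the condition $(t-b) \nmid_l f(t)$ for all $b$ becomes \eqref{eqn:irredcriteriadelta 2}. Combining the two equivalences completes the argument.

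The proof is essentially a bookkeeping exercise, so I do not anticipate a serious obstacle; the only points requiring care are the clean enumeration of factorization patterns, in particular confirming that for degree $3$ a linear factor must appear on one side so that the right- and left-division criteria together detect all reducibility, and noting that the hypothesis ``$\sigma$ an automorphism'' in (ii) is exactly what legitimizes the left-division remainder formula of Proposition \ref{prop:leftlinearfactordelta}.
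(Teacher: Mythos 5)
Your proposal is correct and follows essentially the same route as the paper: reduce irreducibility of a degree-$2$ (resp.\ degree-$3$) monic polynomial to the absence of right (resp.\ right and left) linear divisors, then translate via \eqref{prop:rightlinearfactordelta} and Proposition \ref{prop:leftlinearfactordelta}. The only difference is that you spell out the enumeration of factorization patterns, which the paper leaves implicit.
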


\begin{proof}
 (i)  $f(t)$ is irreducible if and only if it has no right linear factors, if and only if
$$N_2 (b) - a_1 N_1(b) - a_0 N_0 (b) = \sigma(b)b + \delta(b) - a_1 b
- a_0 \neq 0,$$ for all $b \in D$ by
\eqref{prop:rightlinearfactordelta}.
\\ (ii)  $f(t)$ is irreducible if and only if it has no left or right linear factors, if and only if \eqref{eqn:irredcriteriadelta 1}
and \eqref{eqn:irredcriteriadelta 2} hold for all $b \in D$ by \eqref{prop:rightlinearfactordelta} and Proposition
\ref{prop:leftlinearfactordelta}.

\end{proof}

We can thus prove a generalization of Theorem \ref{thm:Petit(19)}:

\begin{theorem} \label{thm:Petit(19), delta not 0}
Suppose $m$ is prime, $\mathrm{char}(D) \neq m$ and $F \cap \mathrm{Fix}(\sigma)$ contains
a primitive $m$th root of unity $\omega$. Then $f(t) = t^m - a \in
D[t;\sigma,\delta]$ is irreducible if and only if $N_m(b) \neq a$ for
all $b \in D$.
\end{theorem}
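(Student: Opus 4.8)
The plan is to reduce the statement to the existence of a right linear divisor, exactly as in the $\delta=0$ result Theorem \ref{thm:Petit(19)}, and then to re-run the argument of Theorem \ref{thm:bourbaki} in the present setting. One direction is immediate: if $N_m(b)=a$ for some $b\in D$ then $(t-b)\vert_r f(t)$ by \eqref{prop:rightlinearfactordelta}, and since $1<m$ this exhibits $f$ as reducible. So it remains to prove that if $f$ has no right root then it is irreducible, equivalently that reducibility of $f$ forces a right root.

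First I would record the arithmetic fact that makes $\omega$ usable. Since $\delta(1)=0$ and $\delta(xy)=\sigma(x)\delta(y)+\delta(x)y$, induction gives $\delta(\omega^k)=k\omega^{k-1}\delta(\omega)$ for all $k$ (using that $\omega\in F\cap\mathrm{Fix}(\sigma)$ is central and fixed by $\sigma$). Hence $0=\delta(\omega^m)=m\omega^{m-1}\delta(\omega)$, and because $\mathrm{char}(D)\neq m$ the scalar $m$ is invertible, so $\delta(\omega)=0$. Thus $\omega\in F\cap\mathrm{Fix}(\sigma)\cap\mathrm{Const}(\delta)=S_0$ lies in the centre of $R$ and commutes with $t$; consequently $(\omega^i t)^j=\omega^{ij}t^j$ and $f(\omega^i t)=\omega^{im}t^m-a=t^m-a=f$ for every $i$.

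The core is the divisibility statement: every monic irreducible right divisor $g$ of $f$ has $\deg(g)\mid m$. Granting this, primality of $m$ forces $\deg(g)\in\{1,m\}$; if $f$ is reducible, choosing an irreducible right divisor of a proper right factor produces a right divisor of degree $<m$, hence linear, and \eqref{prop:rightlinearfactordelta} then yields a right root $N_m(b)=a$. To prove the divisibility I would imitate the proof of Theorem \ref{thm:bourbaki}: starting from a factorisation $f=qg$, form the conjugate polynomials $g_i$ obtained from $g$ by the substitution $t\mapsto\omega^i t$, let $h$ be the monic polynomial with $Rh=\bigcap_{i=0}^{m-1}Rg_i$, show $f\in Rh$, deduce from $h(\omega t)=h(t)$ and $\omega$ being a primitive $m$th root of unity that $h=f$, and finally read off from $Rf=\bigcap_i Rg_i$ that $f$ is the least common left multiple of the $g_i$, hence a product of $m/d$ polynomials similar to $g$, where $d=\deg g$.

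The main obstacle is precisely the construction of the conjugates. When $\delta=0$ the assignment $t\mapsto\omega^i t$ is a ring endomorphism of $R$ (Lemma \ref{lem:f(bt)=q(bt)g(bt)}), so $g_i:=g(\omega^i t)$ automatically right-divides $f=f(\omega^i t)$; when $\delta\neq0$ this fails, since a direct computation shows $t\mapsto\omega^i t$ is only a ring isomorphism $D[t;\sigma,\delta]\xrightarrow{\sim}D[t;\sigma,\omega^{-i}\delta]$ fixing $D$, and the naively substituted $g(\omega^i t)$ need not right-divide $f$ within $R$ (already for $m=2$ one computes $N_2(-b)=N_2(b)-2\delta(b)$, so $t+b$ right-divides $f$ only when $\delta(b)=0$). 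The real content is thus to exhibit, intrinsically, the $m/d$ genuine right divisors of $f$ in $R$ that are similar to $g$ and whose least common left multiple is $f$. I expect this to be the hard step: one must use the central primitive root of unity $\omega$ to organise the right divisors of $f=t^m-a$ into a single similarity class, forcing $R/Rf$ to be homogeneous, so that $m=\dim_D(R/Rf)$ is a multiple of $d=\deg g$. Once $d\mid m$ is established, the reduction above together with \eqref{prop:rightlinearfactordelta} completes the proof.
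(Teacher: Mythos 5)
Your opening reduction and your diagnosis of the difficulty are both accurate, but the proposal stops exactly where the proof has to begin, so as it stands it is not a proof: the key claim that every monic irreducible right divisor of $f$ has degree dividing $m$ is announced but not established, and you explicitly defer the construction of the conjugate divisors and the homogeneity of $R/Rf$ to an unspecified argument. You should know that the paper's own proof has the identical gap: after showing $\delta(\omega)=0$ (which you do correctly, by the same computation), it simply asserts that the proof "now follows exactly as in" the $\delta=0$ case, i.e.\ that the argument of Theorem \ref{thm:bourbaki} transfers verbatim. It does not: that argument rests on Lemma \ref{lem:f(bt)=q(bt)g(bt)}, whose statement and proof use $q_it^ig_jt^j=q_i\sigma^i(g_j)t^{i+j}$ and hence require $\delta=0$. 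As you observe, for $\omega\in F\cap\mathrm{Fix}(\sigma)\cap\mathrm{Const}(\delta)$ one has $(\omega t)a=\sigma(a)(\omega t)+\omega\delta(a)$, so $t\mapsto\omega t$ identifies a copy of $D[t;\sigma,\omega\delta]$ inside $R$ and is multiplicative on $R$ itself only when $\omega=1$ or $\delta=0$; consequently $g(\omega^i t)$ need not right-divide $f$, exactly as your $m=2$ computation $N_2(-b)=N_2(b)-2\delta(b)$ shows.

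Moreover, the gap cannot be closed, because the statement is false for $\delta\neq 0$. Take $D=F=\mathbb{C}(x)$, $\sigma=\mathrm{id}$, $\delta=d/dx$, $m=3$ and $a=-x^3-3x$; all hypotheses hold since $\zeta_3\in\mathbb{C}\subset F\cap\mathrm{Fix}(\sigma)$. A direct computation in $\mathbb{C}(x)[t;\delta]$ gives $f(t)=t^3-a=t^3+x^3+3x=(t+x)(t^2-xt+x^2+1)$, so $f$ is reducible (it has a left linear factor). On the other hand $N_3(b)=b^3+3bb'+b''$ here, and the Riccati-type equation $b^3+3bb'+b''=-x^3-3x$ has no solution $b\in\mathbb{C}(x)$: any pole of $b$ must be simple with residue $\gamma\in\{1,2\}$ (the coefficient of $(x-x_0)^{-3}$ is $\gamma(\gamma-1)(\gamma-2)$, while a pole of order $k\ge2$ leaves an uncancelled $\gamma^3(x-x_0)^{-3k}$); comparing coefficients of $x^3$ and $x^2$ at infinity forces $b=\alpha x+O(1/x)$ with $\alpha^3=-1$; and comparing coefficients of $x$ then gives $\alpha=1+\sum_j\gamma_j$, a positive integer, contradicting $\alpha^3=-1$. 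So $N_3(b)\neq a$ for all $b\in D$ while $f$ is reducible. Your instinct that constructing the conjugate right divisors is "the real content" is therefore correct in the strongest sense: without an extra hypothesis (e.g.\ $\delta=0$, which is Theorem \ref{thm:Petit(19)}) the divisibility $d\mid m$ for right divisors, and with it the theorem, simply fails.
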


\begin{proof}
Recall that
$$\delta(b^n) = \sum_{i=0}^{n-1} \sigma(b)^i
\delta(b)b^{n-1-i}$$ for all $b \in D$, $n \geq 1$ by \cite[Lemma
1.1]{Goodearl} and so
\begin{align*}
0 &= \delta(1) = \delta(\omega^m) = \sum_{i=0}^{m-1} \sigma(\omega)^i
\delta(\omega) \omega^{m-1-i} = \sum_{i=0}^{m-1} \omega^i
\delta(\omega) \omega^{m-1-i} \\ &= \sum_{i=0}^{m-1} \delta(\omega)
\omega^{m-1} = \delta(\omega) \omega^{m-1} m,
\end{align*}
where we have used $\omega \in F \cap \mathrm{Fix}(\sigma)$.
Therefore $\omega \in \mathrm{Const}(\delta)$ because
$\mathrm{char}(D) \neq m$, hence also $\omega^i \in
\mathrm{Const}(\delta)$ and so $(\omega t)^i = \omega^i t^i$ for all
$i \in \{ 1, \ldots , m \}$. Furthermore if $b \in D$, then $(t-b)
\nmid_r f(t)$ is equivalent to $N_m(b) \neq a$ by
\eqref{prop:rightlinearfactordelta}. The proof now follows exactly as
in Theorem
\ref{thm:Petit_factor}.
\end{proof}

\begin{corollary} \label{cor:Petit(19), delta not 0, m=3}
Suppose $\mathrm{char}(D) \neq 3$, $\sigma = id$ and $F \cap \mathrm{Fix}(\sigma)$
contains a primitive third root of unity. Then $f(t) = t^3-
a \in D[t; \delta]$ is irreducible if and only if $$N_3(b) = b^3 + 2
\delta(b)b + b\delta(b) + \delta^2(b) \neq a,$$ for all $b \in D$.
\end{corollary}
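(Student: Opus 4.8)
The plan is to recognize this corollary as the specialization of Theorem \ref{thm:Petit(19), delta not 0} to the case $m = 3$ and $\sigma = id$, and then to reduce the stated irreducibility criterion to the explicit formula by unwinding the recursion that defines $N_i$.

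First I would verify that the hypotheses of Theorem \ref{thm:Petit(19), delta not 0} are satisfied in this setting: $m = 3$ is prime, we are assuming $\mathrm{char}(D) \neq 3$, and $F \cap \mathrm{Fix}(\sigma)$ is assumed to contain a primitive third root of unity. Hence Theorem \ref{thm:Petit(19), delta not 0} applies and yields that $f(t) = t^3 - a$ is irreducible if and only if $N_3(b) \neq a$ for all $b \in D$. The proof then amounts to computing $N_3(b)$ in the special case $\sigma = id$.

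With $\sigma = id$ the recursion $N_{i+1}(b) = \sigma(N_i(b))b + \delta(N_i(b))$ becomes $N_{i+1}(b) = N_i(b)b + \delta(N_i(b))$, and the left $\sigma$-derivation $\delta$ specializes to an ordinary derivation satisfying $\delta(xy) = x\delta(y) + \delta(x)y$. Starting from $N_0(b) = 1$ and using $\delta(1) = 0$, I compute successively $N_1(b) = b$, then $N_2(b) = b^2 + \delta(b)$, and finally
$$N_3(b) = (b^2 + \delta(b))b + \delta\big(b^2 + \delta(b)\big).$$
Applying the product rule $\delta(b^2) = b\delta(b) + \delta(b)b$ and collecting terms gives $N_3(b) = b^3 + 2\delta(b)b + b\delta(b) + \delta^2(b)$, which is exactly the expression in the statement. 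Substituting this value of $N_3(b)$ into the criterion furnished by Theorem \ref{thm:Petit(19), delta not 0} completes the proof.

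There is essentially no obstacle here, since the argument is a direct specialization followed by a short computation. The only point requiring care is that $D$ is noncommutative, so that the two contributions $\delta(b)b$ and $b\delta(b)$ arising from $\delta(b^2)$ must be kept as distinct terms rather than merged; this is precisely what produces the asymmetric coefficients $2\delta(b)b$ and $b\delta(b)$ in the final formula.
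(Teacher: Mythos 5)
Your proposal is correct and is exactly the intended argument: the corollary is stated in the paper without a separate proof precisely because it is the specialization of Theorem \ref{thm:Petit(19), delta not 0} to $m=3$, $\sigma=id$, combined with the short computation $N_3(b)=(b^2+\delta(b))b+\delta(b^2+\delta(b))=b^3+2\delta(b)b+b\delta(b)+\delta^2(b)$ using $\delta(b^2)=b\delta(b)+\delta(b)b$. Your attention to the noncommutativity of $D$ in splitting $\delta(b^2)$ into the two distinct terms is the right point of care, and the hypotheses of the theorem are verified correctly.
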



\end{document}